\documentclass{amsart}

\usepackage{amssymb,amscd,a4wide,graphicx,tikz,hyperref,amsbsy,mathrsfs,enumerate,mathtools,stmaryrd}
\usepackage{algorithm,algorithmic}
\usepackage{hyperref}
\usepackage[all]{xy}
\usepackage{color}
\usepackage{todonotes}

\title[Self-affine tiles and Rauzy Fractals]{Neighbors of self-affine tiles and Rauzy Fractals}
\author{Beno\^it Loridant}
\author{J\"org M. Thuswaldner}
\author{Shu-Qin Zhang$^\ast$}
\thanks{$^\ast$Corresponding author}
\thanks{The second author is supported by the ANR-FWF grant I 6750. The third author is supported by NSFS No. 12101566}
\address[B.L. \& J.M.T.]{Chair of Mathematics and Statistics, University of Leoben, Franz-Josef-Strasse 18, A-8700 Leoben, Austria}
\email{benoit.loridant@unileoben.ac.at}
\email{joerg.thuswaldner@unileoben.ac.at}
\address[S.-Q.Z.]{School of Mathematics and Statistics, Zhengzhou University, 100 Science Avenue, Zhengzhou, Henan 45001, People’s Republic of China}
\email{sqzhang@zzu.edu.cn}
\dedicatory{Dedicated to Professor Shigeki Akiyama on the occasion of his 65\textsuperscript{th} birthday.}

\newtheorem{lemma}{Lemma}[section]
\newtheorem{theorem}[lemma]{Theorem}

\newtheorem{proposition}[lemma]{Proposition}

\theoremstyle{definition}
\newtheorem{definition}[lemma]{Definition}

\theoremstyle{remark}

\newtheorem{example}[lemma]{Example}

\numberwithin{equation}{section}

\newcommand{\assign}{\leftarrow}

\newcommand{\N}{\mathbb{N}}
\newcommand{\Z}{\mathbb{Z}}

\newcommand{\R}{\mathbb{R}}

\newcommand{\cA}{\mathcal{A}}
\newcommand{\cB}{\mathcal{B}}

\newcommand{\cP}{\mathcal{P}}
\newcommand{\cR}{\mathcal{R}}
\newcommand{\cS}{\mathcal{S}}
\newcommand{\cT}{\mathcal{T}}

\newcommand{\ba}{\mathbf{a}}
\newcommand{\bb}{\mathbf{b}}
\newcommand{\bd}{\mathbf{d}}
\newcommand{\be}{\mathbf{e}}
\newcommand{\bm}{\mathbf{m}}

\newcommand{\bu}{\mathbf{u}}
\newcommand{\bv}{\mathbf{v}}
\newcommand{\bw}{\mathbf{w}}
\newcommand{\bx}{\mathbf{x}}
\newcommand{\by}{\mathbf{y}}
\newcommand{\bz}{\mathbf{z}}

\newcommand{\xdownarrow}[1]{%
  {\left\downarrow\vbox to #1{}\right.\kern-\nulldelimiterspace}
}

\date{\today}

\begin{document} 

\begin{abstract}
Although the theory of self-affine tiles and the theory of Rauzy fractals are quite different from each other, they have some common features. Both, self-affine tiles and Rauzy fractals have tiling properties and these tiling properties can be checked and described by certain graphs, so-called {\it contact graphs} and {\it neighbor graphs}. The contact graph is often quite easy to construct, but only the neighbor graph contains full information on the overlaps of the tiles in the presumed tiling. In the present paper we establish an algorithm that allows to construct the neighbor graph starting from the contact graph. Such an algorithm is already known in the case of self-affine tiles. In the present paper we give a simplified proof of this algorithm that can be extended to the case of Rauzy fractals. Our algorithms are more efficient than na\"ive algorithms for the construction of the neighbor graph.
\end{abstract}

\maketitle

\setcounter{tocdepth}{1}
\tableofcontents

\section*{Introduction} 
In the present paper we study self-affine tiles and Rauzy fractals. For both of these objects, there exists a vast literature. The foundations of the theory of self-affine tiles were laid by Kenyon~\cite{Kenyon:92}, Gr\"ochenig and Haas~\cite{GroechenigHaas:94}, as well as Lagarias and Wang~\cite{LW:96,LW:96a,LW:97} in the 1990s and since then these objects have been studied extensively. Important topics include periodic tiling properties~\cite{LW:97}, the characterization of digit sets~\cite{AL:19,Lai-Lau:17,Lai-Lau-Rao:17}, topological questions~\cite{CT:16,DLN:22,TZ:19},  relations to Fuglede's conjecture~\cite{LQT:23}, as well as relations to wavelets~\cite{FG:15,GM:92}. Rauzy fractals go back to Rauzy~\cite{Rauzy:82} and were studied in a more general framework in \cite{Arnoux-Ito:01}.  They play a role in many branches of mathematics like for instance in the theory of aperiodic order \cite{BG:13,BG:20}, in the theory of symbolic dynamical systems~\cite{ABBLS:15,CANTBST,Ito-Rao:06} and in number theory~\cite{Akiyama:02,BS:05,BST:19}. Also, Rauzy fractals often admit different kinds of tilings~\cite{Ito-Rao:06}, such as a periodic tiling and a self-replicating tiling.

Although the theory for self-affine tiles and Rauzy fractals have been developped in different directions, they have common features. As already mentioned, both, self-affine tiles and Rauzy fractals, often admit tilings. And even more, in both cases, so-called {\em contact graphs} and {\em neighbor graphs} can be used in order to check tiling properties. Contact graphs are defined in \cite{GroechenigHaas:94} for the case of self-affine tiles and in \cite{ST:09} for the case of Rauzy fractals. In \cite{LW:97} a Fourier transform of the contact graph was used in order to establish a general tiling result for self-affine tiles. The analogue of such a tiling result remains open for Rauzy fractals and is related to the famous {\em Pisot conjecture} \cite{ABBLS:15}, although there exist many partial results (see for instance~\cite{Arnoux-Ito:01,Barge:16,Barge:18,BD:02}). We therefore think that it is interesting to look at contact and neighbor graphs for self-affine tiles and Rauzy fractals at the same time, and we will do this in the present paper. Although we do not think that the methods leading to the general tiling result of \cite{LW:97} can be carried over to the case of Rauzy fractals, we hope that analogies between the tiling theory of self-affine tiles and Rauzy fractals can help to shed some light on the Pisot conjecture.

For self-affine tiles as well as for Rauzy fractals it is fairly easy to show that they give rise to multi-tilings. To show that these multi-tilings are actually tilings turns out to be the hard part. To this end, one has to show that the tiles do not overlap in sets of positive measure. Since these overlaps are captured by the contact graph and, even more precisely, by the neighbor graph associated to the self-affine tile or Rauzy fractal, these graphs naturally play a role when it comes to tiling properties (see \cite{GroechenigHaas:94,ST:09}). The contact graph is often considerably smaller and easier to construct than the neighbor graph. However, only the neighbor graph contains full information on the overlaps. For this reason we developed an algorithm that allows to construct the neighbor graph from the contact graph in the case of self-affine tiles \cite{ST:03}. This algorithm is useful in order to characterize topological properties of self-affine tiles \cite{AT:05,TZ:20} because, contrary to na\"ive algorithms, it can be used to obtain the neighbor graph for whole classes of self-affine tiles. In the present paper we provide a simplified proof of this algorithm that can be extended to the case of Rauzy fractals. We think that our new algorithm for Rauzy fractals can be used to characterize ``neighbors'' for whole classes of Rauzy fractals and forms the basis for getting topological information on Rauzy fractals in dimension $3$ and higher. Even for single examples like the natural generalization of Rauzy's classical ``tribonacci substitution'' studied in \cite{DM:11} it would be interesting to prove topological results.

We mention that, according to \cite{SirventWang02}, Rauzy fractals are solutions of graph directed iterated function systems in the sense of Mauldin and Williams~\cite{Mauldin-Williams:88}. Thus, one could ask if our results can be extended to more general tiles that are solutions of graph directed iterated function systems, like for instance the ones studied in \cite{GHR:99,LW:03}. However,  our methods rely on an additional structure of Rauzy fractals that is not present in the case of more general tiles.

\subsection*{Outline} In Section~\ref{sec:SelfAffine} we provide the necessary definitions around self-affine tiles. After that we state the algorithm that constructs the neighbor graph of a self-affine tile starting from its contact graph. We then give a simplified proof that the algorithm outputs the neighbor graph after finitely many steps. Section~\ref{sec:RauzyBasic} contains all relevant definitions and basic results on Rauzy fractals and their tiling properties. In Section~\ref{sec:RGRAPHS} we define the neighbor and contact graph for Rauzy fractals. To be consistent with the literature, in the setting of Rauzy fractals, the neighbor graph is called {\em self-replicating boundary graph}. After recalling the definitions given already in \cite[Section~5]{ST:09}, we provide simplified versions of these graphs that are better suited for our purposes. Section~\ref{sec:RALGOR} contains the statement and proof of the new algorithm that allows to construct the self-replicating boundary graph for a Rauzy fractal from its contact graph. Although its proof is inspired by our proof for the self-affine case from Section~\ref{sec:SelfAffine}, the details are more involved. The paper ends with Section~\ref{sec:Examples}, where our new algorithm is applied to two examples.

\subsection*{Notation} Throughout the paper, $\Vert\cdot\Vert$ denotes some norm on $\R^d$. The induced operator norm is also denoted by $\Vert\cdot\Vert$. Let $G$ be a directed graph. Then the graph $\mathop{Red}(G)$ is the largest subgraph of $G$ that has no node without an outgoing edge.  Let $L$ be a linear subspace of $\R^d$ equipped with the Lebesgue measure. We say that a collection $\mathcal{K}$ of compact sets with nonempty interior forms a {\em multi-tiling} of $L$ if there is $m\in \N$ such that almost every point of $L$ is contained in exactly $m$ elements of $\mathcal{K}$. If $m=1$, we call $\mathcal{K}$ a {\em tiling} of $L$.  

\section{The neighbor finding algorithm for self-affine tiles revisited} \label{sec:SelfAffine}

\subsection{Self-affine tiles with standard digit set}
Let $M$ be a $d\times d$ expanding integer matrix, {\it i.e.}, a $d\times d$ integer matrix each of whose eigenvalues is greater than $1$ in modulus. Suppose that $\mathcal{D}$ is a complete set of coset representatives of $\Z^d/M\Z^d$. Then by an application of the contraction mapping theorem~\cite{H:81} we know that there exists a unique nonempty compact subset $\cT=\cT(M,\mathcal{D})$ of $\R^d$ such that
\begin{equation}\label{setequ:affinetile}
\cT=\bigcup_{\bd\in\mathcal{D}}M^{-1}(\cT+\bd).
\end{equation}
We call $\cT$ a {\em self-affine tile with standard digit set $\mathcal{D}$}.  Bandt~\cite{Bandt91} showed that the Lebesgue measure of a self-affine tile with standard digit set is positive. These tiles have been studied extensively; Figure~\ref{Ex:self-affine tiles} provides two examples. For a survey on earlier results on the topic we refer to \cite{Wang99}. A more recent survey is provided in \cite{Lai-Lau:17}.   
\begin{figure}[h]
\includegraphics[width=2.2 cm]{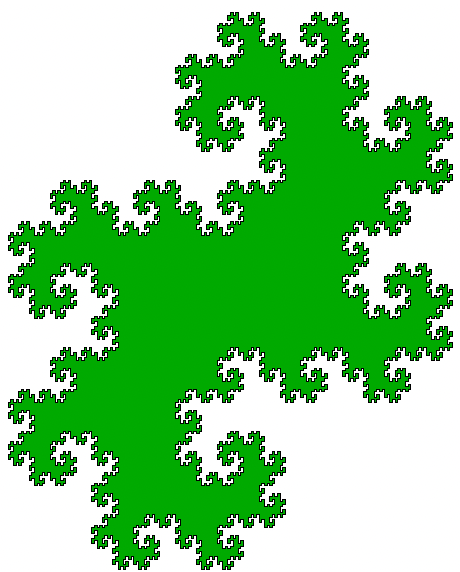} \quad\quad\quad\quad  \includegraphics[width= 3.2 cm]{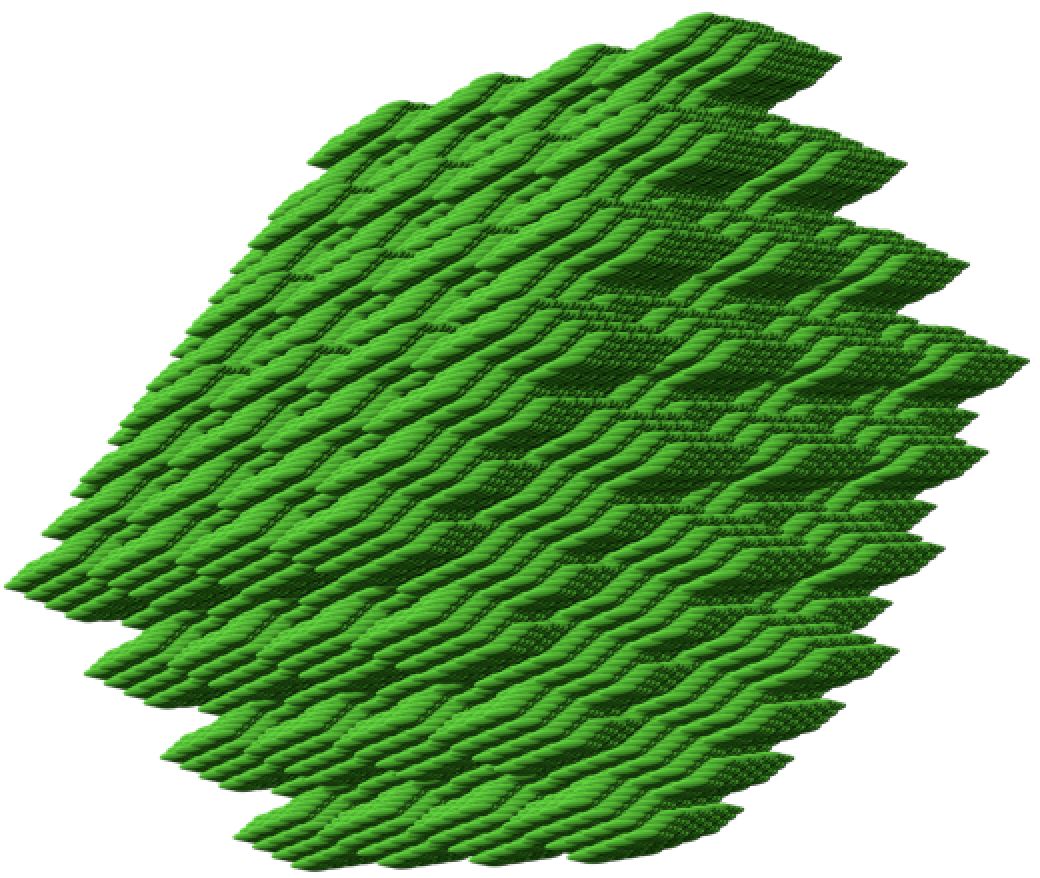}
\caption{Knuth's {\em twindragon}, a famous self-affine tile (see {\it e.g.}~\cite[p.~608]{Knuth:98}), and a three-dimensional self-affine tile.}\label{Ex:self-affine tiles}
\end{figure}

By \cite[Corollary~1.1b]{LW:96}, for each self-affine tile $\cT=\cT(M,\mathcal{D})$ with standard digit set the collection $\{\cT+\bz \colon \bz\in \Z^d\}$ forms a multi-tiling. Often, this collection even forms a tiling. A general characterization of this tiling property is contained in \cite{LW:97}.

\subsection{The neighbor graph and the contact graph}
Let $\cT=\cT(M,\mathcal{D})$ be a self-affine tile with standard digit set. For $\bm\in \Z^d$ let $\cS[\bm]: =\cT\cap (\cT+\bm)$ be the ``overlap'' of $\cT$ with its translate by $\bm$. The \emph{neighbor set} $S$ of $\cT$ is the set of all nonzero $\Z^d$-translates $\bm$ for which the overlap $\cS[\bm]$ is nonempty, {\em i.e.}, 
$$
S=\big\{\bm\in \Z^d\setminus\{\mathbf{0}\}\;: \; \cS[\bm]\neq \emptyset\big\}.
$$
The set $S$ is  finite by compactness of $\cT$. The terminology ``neighbor set'' is geometrically justified because if $\{\cT+\bz \colon \bz\in \Z^d\}$ forms a tiling, then $S$ consists of all translates $\bm$ for which  $\cT +\bm$ ``touches'' $\cT$. By the definition of $\cT$ via the set equation \eqref{setequ:affinetile} it turns out that  
\begin{equation*}
\begin{split}
\label{GIFS:boundary}
\cS[\bm] 
=\cT\cap (\cT+\bm)
 &
=M^{-1}(\cT+\mathcal{D})\cap M^{-1}(\cT+\mathcal{D}+M\bm)
\\  &
=M^{-1}\bigcup_{\bd,\bd'\in \mathcal{D}}(\cS[M\bm+\bd'-\bd]+\bd)     
\end{split}
\end{equation*}
holds for each $\bm\in S$. Thus $(\cS[\bm])_{\bm\in S}$ is the unique vector of nonempty compact sets that is given by the graph-directed iterated function system (in the sense of~\cite{Mauldin-Williams:88})
\begin{equation}
\label{GIFS:boundary}
\cS[\bm]
=M^{-1}\bigcup_{\begin{subarray}{c}
\bd,\bd'\in\mathcal{D},\, \bm'\in S \\ \bm'=M\bm+\bd'-\bd
\end{subarray}}(\cS[\bm']+\bd)      \qquad(\bm\in S).
\end{equation}

If $\{\cT+\bz \colon \bz\in \Z^d\}$ forms a tiling, the boundary of $\cT$ can be written as a union of these overlaps, in particular,
$$
\partial \cT=\bigcup_{\bm\in \cS} \cS[\bm].
$$

For each $A\subset \Z^d$ we define a graph $\Gamma_A$ in the following way. The vertices of $\Gamma_A$ are the elements of $A$ and there exists a labeled edge $ \bm\xrightarrow{\bd|\bd'} \bm'$ from $\bm\in A$ to $\bm'\in A$ with label $\bd|\bd' \in \mathcal{D}\times\mathcal{D}$ if and only if 
$
\bm'=M\bm+\bd'-\bd. 
$

Let $S\subset \Z^d$ be the neighbor set defined above. Then we call the graph $\Gamma_S$ the \emph{neighbor graph} of $\cT=\cT(M,\mathcal{D})$ ({\it cf.}~\cite{ST:03}). Hence, the set equation \eqref{GIFS:boundary} can be written as 
\begin{equation*}
\cS[\bm]
=M^{-1}\bigcup_{
\begin{subarray}{c}
\bd,\bd'\in\mathcal{D},\, \bm'\in S \\ \bm\xrightarrow{\bd|\bd'} \bm' \in \Gamma_S
\end{subarray}}(\cS[\bm']+\bd)      \qquad(\bm\in S).
\end{equation*}
 
The following characterization of the neighbor graph is an immediate consequence of \cite[Lemma~4.2 and its proof]{ST:03}.

\begin{lemma}\label{lem:tileendsinloop}
The graph $\Gamma_{S\cup\{\mathbf{0}\}}$ is the largest subgraph of $\Gamma_{\mathbb{Z}^d}$ for which each node belongs to a walk that ends in a loop. 
\end{lemma}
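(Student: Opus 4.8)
The plan is to bypass the cited reference and establish the statement directly, by identifying both the node set and the edge set of the claimed largest subgraph in terms of the overlaps $\cS[\bm]$. The engine of the argument is a two-sided correspondence between points of an overlap $\cS[\bm]$ and certain bounded infinite walks in $\Gamma_{\Z^d}$, obtained from the radix representation of points of $\cT$ that is induced by iterating the set equation \eqref{setequ:affinetile}. Once this correspondence is in place, the assertion reduces to the elementary fact that, in a locally finite graph, a walk stays bounded if and only if it ends in a loop.

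First I would fix the finite constant $R=\sum_{j\ge1}\Vert M^{-j}\Vert\cdot\max_{\bd,\bd'\in\mathcal{D}}\Vert\bd-\bd'\Vert$, which is finite because $M$ is expanding, and prove the following claim: for $\bm\in\Z^d$ one has $\cS[\bm]\neq\emptyset$ if and only if there is an infinite walk $\bm=\bm_0\xrightarrow{\bd_1|\bd_1'}\bm_1\xrightarrow{\bd_2|\bd_2'}\cdots$ in $\Gamma_{\Z^d}$ all of whose nodes satisfy $\Vert\bm_k\Vert\le R$. For the forward direction I would write a point $x\in\cS[\bm]=\cT\cap(\cT+\bm)$ through its two radix expansions $x=\sum_{i\ge1}M^{-i}\bd_i$ and $x-\bm=\sum_{i\ge1}M^{-i}\bd_i'$ with digits in $\mathcal{D}$, and set $\bm_k=\sum_{j\ge1}M^{-j}(\bd_{k+j}-\bd_{k+j}')$; then $\bm_k=M\bm_{k-1}+\bd_k'-\bd_k$, so each $\bm_k\xrightarrow{\bd_{k+1}|\bd_{k+1}'}\bm_{k+1}$ is an edge of $\Gamma_{\Z^d}$, and the estimate $\Vert\bm_k\Vert\le R$ is immediate. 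For the converse, given such a bounded walk I would unroll the edge relations to $\bm=M^{-k}\bm_k+\sum_{i=1}^kM^{-i}(\bd_i-\bd_i')$; letting $k\to\infty$, the term $M^{-k}\bm_k$ vanishes by boundedness, so $\bm=\sum_{i\ge1}M^{-i}(\bd_i-\bd_i')$ and the point $\sum_{i\ge1}M^{-i}\bd_i$ lies in $\cT\cap(\cT+\bm)$, giving $\cS[\bm]\neq\emptyset$.

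Next I would translate boundedness into the combinatorial condition of the lemma. Since the nodes of a bounded walk all lie in the finite set $\{\bn\in\Z^d:\Vert\bn\Vert\le R\}$, such a walk must revisit a node and therefore contains a closed subwalk, so it ends in a loop; conversely, any walk reaching a loop yields a bounded infinite walk by cycling forever. Combined with the previous claim, this shows that $\cS[\bm]\neq\emptyset$ precisely when $\bm$ lies on a walk in $\Gamma_{\Z^d}$ ending in a loop. As $\cS[\mathbf{0}]=\cT\neq\emptyset$, the definition of $S$ then gives that the set of nodes lying on a walk ending in a loop is exactly $S\cup\{\mathbf{0}\}$.

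Finally I would settle the maximality statement on the level of the full subgraph. If $H$ is any subgraph of $\Gamma_{\Z^d}$ in which every node lies on a walk ending in a loop, then every node of $H$ carries a bounded infinite walk, hence belongs to $S\cup\{\mathbf{0}\}$; since every edge of $H$ is an edge of $\Gamma_{\Z^d}$ between two such nodes, $H$ is a subgraph of $\Gamma_{S\cup\{\mathbf{0}\}}$. Conversely $\Gamma_{S\cup\{\mathbf{0}\}}$ itself has the required property, because the walk constructed in the first step from a point of $\cS[\bm]$ stays inside $S\cup\{\mathbf{0}\}$ and thus reaches a loop using only edges of $\Gamma_{S\cup\{\mathbf{0}\}}$; this identifies $\Gamma_{S\cup\{\mathbf{0}\}}$ as the largest such subgraph. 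The step I expect to be the main obstacle is the first one, namely setting up the correspondence cleanly in both directions, in particular justifying the convergence $M^{-k}\bm_k\to\mathbf{0}$ and verifying that the limiting digit strings genuinely represent a common point of $\cT$ and $\cT+\bm$; the combinatorial reduction and the maximality bookkeeping afterwards are then routine.
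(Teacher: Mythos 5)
Your proof is correct, but it is a genuinely different route from the paper's, because the paper offers no argument at all: it obtains the lemma as an immediate consequence of \cite[Lemma~4.2 and its proof]{ST:03}. What you do is reconstruct, self-containedly, the content of that cited result. Your three steps are all sound: (i) the correspondence $\cS[\bm]\neq\emptyset$ $\Leftrightarrow$ existence of a bounded infinite walk rooted at $\bm$, via the two radix expansions $x=\sum_{i\ge1}M^{-i}\bd_i$, $x-\bm=\sum_{i\ge1}M^{-i}\bd'_i$ and the tail vectors $\bm_k=\sum_{j\ge1}M^{-j}(\bd_{k+j}-\bd'_{k+j})$, which indeed satisfy the edge relation $\bm_k=M\bm_{k-1}+\bd'_k-\bd_k$; (ii) the pigeonhole conversion between bounded walks and walks ending in a loop; (iii) the maximality bookkeeping, which works precisely because $\Gamma_A$ is by definition the \emph{induced} subgraph on $A$, so any subgraph of $\Gamma_{\Z^d}$ whose nodes all lie in $S\cup\{\mathbf{0}\}$ is automatically a subgraph of $\Gamma_{S\cup\{\mathbf{0}\}}$. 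Your observation that every node $\bm_k$ of the constructed walk satisfies $\cS[\bm_k]\neq\emptyset$ (witnessed by the shifted point $\sum_{j\ge1}M^{-j}\bd_{k+j}$) is exactly what is needed for $\Gamma_{S\cup\{\mathbf{0}\}}$ itself to enjoy the loop property, a point that must not be skipped. Two cosmetic caveats: your constant $R$ collides with the paper's notation for the contact set, so it should be renamed; and the slogan ``a bounded walk ends in a loop'' is loose, since a bounded infinite walk need not be eventually periodic --- but your actual argument (truncate at the second visit to a repeated node) proves precisely the statement needed. As for what each approach buys: the paper's citation keeps the exposition short, while your argument makes the lemma self-contained and makes explicit the walk-to-point dictionary that the paper later uses only implicitly, for instance in the proof of Proposition~\ref{lem:2}, where infinite walks of $\Gamma_{\Z^d}$ containing infinitely many loops are placed in $\Gamma_{S\cup\{\mathbf{0}\}}$ by exactly this lemma.
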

 
To study the neighbor graph, we introduce another graph which turns out to be a subgraph of $\Gamma_{S\cup\{\mathbf{0}\}}$. Let $\{\be_1,\be_2,\dots,\be_d\}$ be a basis of the lattice $\Z^d$ and set $R_0=\{0,\pm \be_1,\dots,\pm \be_d\}$. Then we inductively define a nested sequence $(R_k)_{k\geq 0}$ of $\Z^d$  by
\begin{equation}\label{eq:contact}
R_k:=\big\{\by\in \Z^d\;:\; (M\by+\mathcal{D})\cap(\by'+\mathcal{D})\neq\emptyset\text{ for } \by' \in R_{k-1}\big\}\cup R_{k-1}.
\end{equation}
The sequence $(R_k)_{k\geq 0}$ will stabilize after finitely many steps which means that $R_{k-1}=R_k$ holds for $k$ large enough (see~\cite[Section 4]{GroechenigHaas:94}).  Let $R' =\bigcup_{k\geq 1} R_k$ and define the {\em contact set} $R\subset R'$ by $\Gamma_{R}=\mathop{Red}(\Gamma_{R'})$. The graph $\Gamma_R$ is called the \emph{contact graph}.

Let $\cT_0=[0,1]^d$ be the $d$-dimensional unit square. Set
$$
\cT_n=M^{-1}(\cT_{n-1}+\mathcal{D}) \qquad(n\ge 1).
$$
Then  $\cT_n$ can be written as 
\begin{align*}
\cT_n&=M^{-n}(\cT_0+\mathcal{D}+M\mathcal{D}+\dots+M^{n-1}\mathcal{D}).
\end{align*}
It is clear that $\cT= \lim_{n\to \infty}\cT_n$, where the limit is taken w.r.t.\ the Hausdorff metric; see~\cite{GroechenigHaas:94}. We mention that $\{\cT_n+\bz\colon\bz\in\Z^d\}$ forms a tiling of $\R^d$ for each $n\geq 0$ even if $\{\cT+\bz\colon\bz\in\Z^d\}$ only forms a multi-tiling (this can be proved by induction on $n$ because $\mathcal{D}$ is a complete set of coset representatives of $\Z^d/M\Z^d$). 
The elements of $R$ are the ``neighbors'' of $\cT_n$ in this tiling in the sense that $\cT_n\cap (\cT_n+\bm)\not=\emptyset$ if and only if $\bm\in R$ (provided that $n$ is large enough; see \cite{ST:03} for details).

\subsection{The neighbor finding algorithm}
We will now describe an algorithm that will allow us to compute the neighbor graph starting from the contact graph. The algorithm is the same as the one presented in \cite{ST:03}, however, we present it in a somewhat simpler way. Indeed, instead of the graph product ``$\otimes$'' defined in \cite{ST:03}, we use the following concept of {\em $R$-corona}.

\begin{definition}[$R$-corona]
Let $\cT=\cT(M,\mathcal{D})$ be a self-affine tile with standard digit set and let $R$ be its contact set.
Let $A\subset \Z^d$ be given. The {\em $R$-corona} of $\Gamma_A$ is the graph $\Gamma_{A+R}$, where $A+R$ is the Minkowski sum. 
\end{definition}

Note that, in the notation of \cite{ST:03} we have $\Gamma_{A+R}=\Gamma_A\otimes \Gamma_R$.

\begin{algorithm}\
  \caption{(Construction of the neighbor graph of a self-affine tile with standard digit set $\cT$.)}
   \label{alg:Tileneighbor}
  \begin{algorithmic}
    \REQUIRE Contact graph $\Gamma_R$
    \ENSURE Neighbor graph $\Gamma_S$
    \STATE $q\assign 1$ 
    \STATE $\Gamma_{R_1} \assign \Gamma_R$
     \REPEAT 
      \STATE $q\assign q+1$
      \STATE $\Gamma_{R_q} \assign\mathop{Red}(\Gamma_{R_{q-1}+R})$
    \UNTIL{$\Gamma_{R_q} = \Gamma_{R_{q-1}}$}
    
    $\Gamma_S \assign \Gamma_{{R_q} \setminus \{\mathbf{0}\}}$
  \end{algorithmic}
 \end{algorithm}

\subsection{A simplified proof for the algorithm}
We now provide a simplified proof of Algorithm~\ref{alg:Tileneighbor}. This proof will carry over to the case of self-replicating tilings induced by Rauzy fractals and, therefore, yields a faster algorithm for the computation of the neighbors of a Rauzy fractal than the algorithm proposed in~\cite{ST:09}. We use the notation of \cite{ST:03}. The tile used in Figures~\ref{fig:FigContact} and~\ref{fig:FigSubdivision} is given by $\cT=\cT(M,\mathcal{D})$ with
\[
M=\begin{pmatrix}
2&-1 \\ 1&2
\end{pmatrix} \quad
\text{and} \quad
\mathcal{D}=\left\{
\begin{pmatrix}
0\\0
\end{pmatrix},\begin{pmatrix}
1\\0
\end{pmatrix},\begin{pmatrix}
2\\0
\end{pmatrix},\begin{pmatrix}
3\\0
\end{pmatrix},\begin{pmatrix}
4\\0
\end{pmatrix}
\right\}.
\]

By \cite[Lemma~4.4]{ST:03}, $R$ contains a basis of the lattice $\mathbb{Z}^d$. Thus each $\bm\in \Z^d$ can be written as
\begin{equation}\label{eq:Rsubdivision}
\bm= \sum_{j=1}^q \bm^{(j)} \qquad (\bm^{(1)},\ldots, \bm^{(q)} \in R).
\end{equation}
If $q$ is chosen to be minimal, we call $q$ the {\em contact degree} of $\bm$ and write $\mathop{cdeg}(\bm)=q$. Since $S$ is a finite set, there is a constant $p$ such that each $\bm\in S$ has $\mathop{cdeg}(\bm)\le p$. 
Geometrically this means that the tiles $\cT + \sum_{j=1}^{k-1} \bm^{(j)}$ and $\cT + \sum_{j=1}^k \bm^{(j)}$, $k\in\{1,\ldots, p\}$, are contact neighbors and we can reach $\cT+\bm$, $\bm\in S$, from $\cT$ by $q\le p$ ``contact neighbor jumps'' (see the illustration in Figure~\ref{fig:FigContact}).
\begin{figure}[h]
\begin{tikzpicture}[]
  \pgftext{\includegraphics[width=0.45\textwidth]{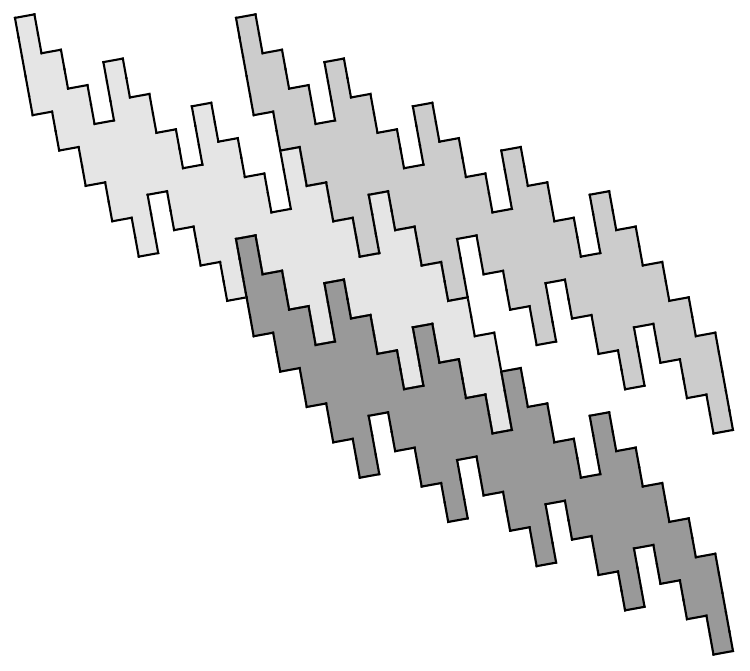}} at (0pt,0pt);
  \node at (0.25,-1.8) {$\mathcal{T}_3$};
  \node at (-2.25,0.3) {$\mathcal{T}_3+\bm^{(1)}$};
  \node at (1.5,2.5) {$\mathcal{T}_3+\bm^{(1)}+\bm^{(2)}$};
\end{tikzpicture}
\begin{tikzpicture}[]
  \pgftext{\includegraphics[width=0.45\textwidth]{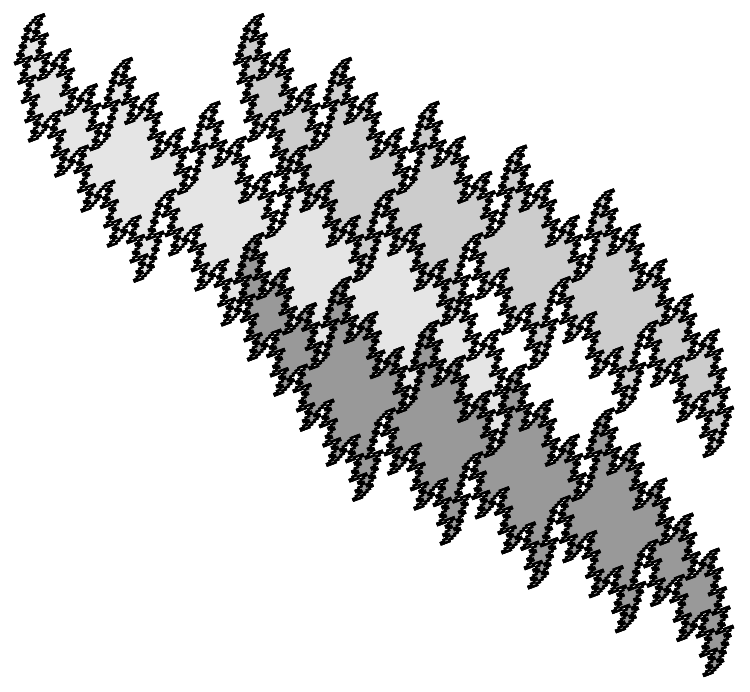}} at (0pt,0pt);
  \node at (0.25,-1.8) {$\mathcal{T}$};
  \node at (-2.35,0.35) {$\mathcal{T}+\bm^{(1)}$};
  \node at (1.5,2.5) {$\mathcal{T}+\bm^{(1)}+\bm^{(2)}$};
\end{tikzpicture}
\caption{In the 3rd approximation one can jump from $\cT_3$ via $\cT_3+\bm^{(1)}$ to $\cT_3+\bm^{(1)}+\bm^{(2)}$; the tile $\cT_3+\bm^{(1)}+\bm^{(2)}$ is not a contact neighbor of $\cT_3$ (left panel). However, $\cT+\bm^{(1)}+\bm^{(2)}$ is a neighbor of $\cT$. In the right panel we can see that $\cT+\bm^{(1)}+\bm^{(2)}$ is a neighbor of $\cT$, {\it i.e.}, $\cT \cap (\cT+\bm^{(1)}+\bm^{(2)})\not=\emptyset$.\label{fig:FigContact}}
\end{figure}
Note that the representation \eqref{eq:Rsubdivision} is not unique. 

Let
\begin{equation}\label{eq:walkS}
w:\; \bm_0 \xrightarrow{\bd_1| \bd'_1} \bm_1 \xrightarrow{\bd_2| \bd'_2} \bm_2 \xrightarrow{\bd_3| \bd'_3} \cdots
\end{equation}
be a walk in $\Gamma_{\Z^d}$. The maximum $\mathop{cdeg}(w) = \max\{\mathop{cdeg}(\bm_k)\colon k\in \N\}$, which may be infinite, is called the {\em contact degree} of $w$. Note that, for a walk $w$ in $\Gamma_S$, we always have $\mathop{cdeg}(w) \le p$ and that $\mathop{cdeg}(w) =1$ means that $w$ is a walk in $\Gamma_R$.  

 It turns out that the contact degree of the nodes of $w$ is monotone in the following sense. 

\begin{lemma}\label{lem:mono}
Let a walk $w$ in $\Gamma_{\Z^d}$ be given as in \eqref{eq:walkS}. Then $\mathop{cdeg}(\bm_k) \le \mathop{cdeg}(\bm_{k+1})$ holds for each $k\in\N$.
\end{lemma}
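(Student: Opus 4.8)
The plan is to prove the single-step inequality $\mathop{cdeg}(\bm_k)\le \mathop{cdeg}(\bm_{k+1})$ for the edge $\bm_k\xrightarrow{\bd_{k+1}|\bd'_{k+1}}\bm_{k+1}$ in \eqref{eq:walkS}; the full statement then follows by applying this to every $k\in\N$. Recall that the edge means $\bm_{k+1}=M\bm_k+\bd'_{k+1}-\bd_{k+1}$ and that $\mathop{cdeg}(\bm_{k+1})$ is, by definition, the least number of summands needed to write $\bm_{k+1}$ as a sum of elements of the contact set $R$. The key idea is that this ``inflation'' step can be inverted by the digit projection $\pi\colon\Z^d\to\Z^d$, $\pi(\bx)=M^{-1}(\bx-\be)$, where $\be\in\mathcal{D}$ is the unique digit with $\be\equiv \bx\pmod{M\Z^d}$ (well defined since $\mathcal{D}$ is a complete set of coset representatives; geometrically $\pi$ records in which $M$-fold supertile a tile lies), and that $\pi$ does not increase contact degree. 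Everything rests on the following \emph{predecessor closure} of the contact set: if $\bn\in R$ and $\bu\xrightarrow{\be|\be'}\bn$ is any edge of $\Gamma_{\Z^d}$, then $\bu\in R\cup\{\mathbf{0}\}$.

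Granting this, I would argue by telescoping. Let $q=\mathop{cdeg}(\bm_{k+1})$ and fix a shortest representation $\bm_{k+1}=\sum_{j=1}^q\bn_j$ with $\bn_j\in R$; write $\bs_i=\sum_{j\le i}\bn_j$ for the partial sums and set $\by_i=\bd_{k+1}+\bs_i$, so that $\by_0=\bd_{k+1}$ and $\by_q=\bd_{k+1}+\bm_{k+1}=M\bm_k+\bd'_{k+1}$. Put $\bv_i=\pi(\by_i)$ and let $\be_i\in\mathcal{D}$ be the digit of $\by_i$; in particular $\be_0=\bd_{k+1}$ and $\be_q=\bd'_{k+1}$. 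Then $\bv_0=M^{-1}(\bd_{k+1}-\bd_{k+1})=\mathbf{0}$ and $\bv_q=M^{-1}(M\bm_k+\bd'_{k+1}-\bd'_{k+1})=\bm_k$, while for each $i$ the increment $\bu_i:=\bv_i-\bv_{i-1}=M^{-1}(\bn_i+\be_{i-1}-\be_i)$ satisfies $\bn_i=M\bu_i+\be_i-\be_{i-1}$, i.e.\ $\bu_i\xrightarrow{\be_{i-1}|\be_i}\bn_i$ is an edge of $\Gamma_{\Z^d}$ ending in $\bn_i\in R$. The congruence $\bn_i+\be_{i-1}-\be_i\equiv\mathbf{0}\pmod{M\Z^d}$ guarantees $\bu_i\in\Z^d$, and predecessor closure yields $\bu_i\in R\cup\{\mathbf{0}\}$. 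Hence $\bm_k=\bv_q-\bv_0=\sum_{i=1}^q\bu_i$ is a sum of at most $q$ elements of $R$, so $\mathop{cdeg}(\bm_k)\le q=\mathop{cdeg}(\bm_{k+1})$, as desired.

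It remains to establish predecessor closure, which I expect to be the only real obstacle. The inclusion $\bu\in R'$ is in fact exactly what the recursion \eqref{eq:contact} produces: from $\bn=M\bu+\be'-\be$ we get $M\bu+\be'=\bn+\be\in(M\bu+\mathcal{D})\cap(\bn+\mathcal{D})$, so if $\bn\in R_{k-1}$ then $\bu\in R_k\subseteq R'$. The delicate point is passing from $R'$ to $R=\mathop{Red}(R')$: since $\bn\in R$ lies on an infinite walk inside $\mathop{Red}(R')$ and $\bu$ has an outgoing edge to $\bn$, the node $\bu$ also starts an infinite walk in $R'$ and therefore survives the reduction, giving $\bu\in R$. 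Making this last step precise (and checking the harmless role of $\mathbf{0}$, which only contributes empty steps to the telescoping sum) is where care is needed; the remaining computations are the routine congruence bookkeeping indicated above. I would finally note that this argument is purely combinatorial and uses nothing about $\cT$ beyond \eqref{eq:contact} and the completeness of $\mathcal{D}$, which is precisely why it should transfer to the self-replicating setting of Rauzy fractals.
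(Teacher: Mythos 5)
Your proof is correct and takes essentially the same route as the paper's: both pull the minimal $R$-representation of $\bm_{k+1}$ back through the edge relation using uniqueness of digits in $\mathcal{D}$, and in fact your increments $\bu_i=M^{-1}(\bn_i+\be_{i-1}-\be_i)$ coincide exactly with the paper's recursively defined summands $\bm_k^{(i)}$ (your digit $\be_i$ of the partial sum $\by_i$ is the paper's carried digit $\bd_{k+1}^{(i)}$), your telescoping identity $\bm_k=\sum_i\bu_i$ being the paper's summation of \eqref{eq:contactdecomp} over $j$. The one point where you go beyond the paper is to your credit: the paper deduces $\bm_k^{(j)}\in R$ directly from \eqref{eq:contact}, which literally only yields membership in $R'$, while your predecessor-closure lemma supplies the missing (easy) observation that a node with an edge into $R$ inherits an infinite walk in $\Gamma_{R'}$ and hence survives the reduction $\mathop{Red}$.
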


\begin{proof}
Let $q= \mathop{cdeg}(\bm_{k+1})$. Then there exist $\bm_{k+1}^{(1)},\ldots, \bm_{k+1}^{(q)} \in R$ with $\bm_{k+1}=\bm_{k+1}^{(1)}+\cdots+ \bm_{k+1}^{(q)}$. Let $\bd_{k+1}^{(0)}=\bd_{k+1}$. Because $\mathcal{D}$ is a complete set of coset representatives of $\Z^d/M\Z^d$, for $j\in\{1,\ldots, q\}$ we may recursively define $(\bm_k^{(j)},\bd_{k+1}^{(j)})\in \Z^d\times\mathcal{D}$ satisfying 
\begin{equation}\label{eq:contactdecomp}
\bm_{k+1}^{(j)}+\bd_{k+1}^{(j-1)}=M\bm_k^{(j)} +\bd_{k+1}^{(j)} . 
\end{equation}
Thus, by the definition of the contact graph (see, in particular, \eqref{eq:contact}) we have $\bm_k^{(j)} \in R$ and, summing \eqref{eq:contactdecomp} over all $j\in\{1,\ldots,q\}$ we get
\begin{equation}\label{eq:contactdecomp2}
\bm_{k+1}+ \bd_{k+1}=M(\bm_k^{(1)}+\cdots+\bm_{k}^{(q)}) +\bd_{k+1}^{(q)}. 
\end{equation}
Because the existence of the edge $\bm_k \xrightarrow{\bd_{k+1}| \bd'_{k+1}} \bm_{k+1}$ implies that 
$\bm_{k+1}+ \bd_{k+1}=M\bm_k +\bd_{k+1}'$, by \eqref{eq:contactdecomp2} and the fact that $\mathcal{D}$ is a complete set of coset representatives of $\Z^d/M\Z^d$ we gain that $\bm_{k}=\bm_{k}^{(1)}+\cdots+ \bm_{k}^{(q)}$ (and $\bd_{k+1}^{(q)}=\bd_{k+1}'$). Because $\bm_k^{(j)} \in R$ and it is possible that $\bm_k^{(j)}=\mathbf{0}$ for some $j\in\{1,\ldots,q\}$, this implies that $\mathop{cdeg}(\bm_{k})\le q$, and the result follows.
\end{proof}
 
 The following proposition contains the key step of our proof.
 
\begin{proposition}\label{lem:2}
Let
\begin{equation}\label{eq:walkSrep}
w:\;\bm_0 \xrightarrow{\bd_1| \bd'_1}\bm_1 \xrightarrow{\bd_2| \bd'_2}\bm_2 \xrightarrow{\bd_3| \bd'_3} \cdots
\end{equation}
be a walk in $\Gamma_{S\cup \{\mathbf{0}\}}$ with $\mathop{cdeg}(w)>1$. Then there exist $\bd_i'' \in \mathcal{D}$ such that 
\begin{align*}
w_1:\; &\bm_0^{(1)} \xrightarrow{\bd_1| \bd''_1}\bm_1^{(1)} \xrightarrow{\bd_2| \bd''_2} \bm_2^{(1)} \xrightarrow{\bd_3| \bd''_3} \cdots, \\
w_2:\; &\bm_0^{(2)} \xrightarrow{\bd_1''| \bd'_1} \bm_1^{(2)} \xrightarrow{\bd_2''| \bd'_2}\bm_2^{(2)} \xrightarrow{\bd_3''| \bd'_3} \cdots
\end{align*}
are walks in $\Gamma_{S\cup \{\mathbf{0}\}}$ satisfying $\mathop{cdeg}(w_1)<\mathop{cdeg}(w)$ and $\mathop{cdeg}(w_2)=1$ ({\it i.e.}, $w_2$ is a walk in $\Gamma_R$), and $\bm_k=\bm_k^{(1)} + \bm_k^{(2)}$ for each $k\in \N$.
\end{proposition}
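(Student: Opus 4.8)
The plan is to peel off a single ``contact layer'' from $w$ simultaneously at every node. Write $q=\mathop{cdeg}(w)$. Since $\mathop{cdeg}(\bm_k)\le q$ for all $k$ and $\mathbf{0}\in R$, each node admits a decomposition $\bm_k=\bn_k^{(1)}+\cdots+\bn_k^{(q)}$ into exactly $q$ summands of $R$ (padding with zeros). The crucial point is to choose these decompositions \emph{coherently} along the whole walk, so that every edge of $w$ splits into a stack of $q$ layered edges of $\Gamma_R$.

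First I would observe that the recursion already carried out in the proof of Lemma~\ref{lem:mono} does exactly this for one edge: given the edge $\bm_k\xrightarrow{\bd_{k+1}|\bd_{k+1}'}\bm_{k+1}$ and a decomposition $\bm_{k+1}=\bn_{k+1}^{(1)}+\cdots+\bn_{k+1}^{(q)}$ into $R$-summands, equation \eqref{eq:contactdecomp} produces $R$-summands $\bn_k^{(j)}$ and digits $\bd_{k+1}^{(j)}\in\mathcal{D}$ (with $\bd_{k+1}^{(0)}=\bd_{k+1}$ and $\bd_{k+1}^{(q)}=\bd_{k+1}'$) such that $\bn_k^{(j)}\xrightarrow{\bd_{k+1}^{(j-1)}|\bd_{k+1}^{(j)}}\bn_{k+1}^{(j)}$ is an edge of $\Gamma_R$ for each $j$ and $\bm_k=\bn_k^{(1)}+\cdots+\bn_k^{(q)}$. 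This recursion runs from the target of an edge to its source, so it constructs consistent layered decompositions backward along any finite truncation $\bm_0\to\cdots\to\bm_N$, once a decomposition of $\bm_N$ has been fixed. To pass to the forward-infinite walk $w$, I would organize these finite factorizations into a tree whose depth-$N$ nodes are the layered factorizations of $\bm_0\to\cdots\to\bm_N$: it is finitely branching, since each layer-tuple lies in the finite set $R^q$, and it contains nodes of every depth by the backward construction, so König's lemma yields an infinite branch. This gives decompositions $\bm_k=\bn_k^{(1)}+\cdots+\bn_k^{(q)}$ with $\bn_k^{(j)}\in R$ and digits $\bd_{k+1}^{(j)}$, defined for all $k$, such that each of the $q$ layers is a walk in $\Gamma_R$ and the layered edges compose to the edges of $w$.

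Given this layered factorization I would set $\bm_k^{(2)}:=\bn_k^{(q)}$, $\bm_k^{(1)}:=\bn_k^{(1)}+\cdots+\bn_k^{(q-1)}$ and $\bd_{k+1}'':=\bd_{k+1}^{(q-1)}$. Summing the layer relations \eqref{eq:contactdecomp} over $j=1,\dots,q-1$ makes the interior digits $\bd_{k+1}^{(1)},\dots,\bd_{k+1}^{(q-2)}$ cancel and leaves $\bm_{k+1}^{(1)}+\bd_{k+1}=M\bm_k^{(1)}+\bd_{k+1}''$, which is precisely the edge $\bm_k^{(1)}\xrightarrow{\bd_{k+1}|\bd_{k+1}''}\bm_{k+1}^{(1)}$ of $w_1$; the top layer $j=q$ gives directly the edge $\bm_k^{(2)}\xrightarrow{\bd_{k+1}''|\bd_{k+1}'}\bm_{k+1}^{(2)}$ of $w_2$, and $\bm_k=\bm_k^{(1)}+\bm_k^{(2)}$ holds by construction. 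As $\bm_k^{(1)}$ is a sum of $q-1$ elements of $R$ we get $\mathop{cdeg}(w_1)\le q-1<q$, while $\bm_k^{(2)}\in R$ shows $w_2$ is a walk in $\Gamma_R$, i.e.\ $\mathop{cdeg}(w_2)=1$.

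Finally I would check that both walks lie in $\Gamma_{S\cup\{\mathbf{0}\}}$. For $w_2$ this is immediate because $\Gamma_R$ is a subgraph of $\Gamma_{S\cup\{\mathbf{0}\}}$. For $w_1$ I would apply Lemma~\ref{lem:tileendsinloop}: its nodes all lie in the finite set of sums of $q-1$ elements of $R$, so the infinite walk $w_1$ revisits a node, whence every node of $w_1$ lies on a walk ending in a loop; since $\Gamma_{S\cup\{\mathbf{0}\}}$ is the largest subgraph of $\Gamma_{\Z^d}$ with this property, the subgraph traced out by $w_1$ is contained in it. The main obstacle is exactly the coherence of the layer decompositions along the infinite walk: the recursion of Lemma~\ref{lem:mono} is inherently backward, so turning the edgewise statement into one factorization valid simultaneously at all nodes forces the compactness step, and the telescoping bookkeeping of the interior digits $\bd_{k+1}^{(j)}$ is the other point requiring care.
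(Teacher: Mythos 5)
Your proof is correct and follows essentially the same route as the paper's: a backward (target-to-source) reconstruction of the decompositions along the walk using that $\mathcal{D}$ is a complete set of coset representatives, a compactness step to make these decompositions coherent along the infinite walk (your K\H{o}nig's lemma tree is the paper's Cantor diagonal argument), and the finiteness-plus-loop argument via Lemma~\ref{lem:tileendsinloop} to conclude $w_1,w_2\in\Gamma_{S\cup\{\mathbf{0}\}}$. The only cosmetic difference is that you carry the full $q$-layer factorization (each layer a walk in $\Gamma_R$, exactly as in the proof of Lemma~\ref{lem:mono}) and then group layers $1,\dots,q-1$ by telescoping, whereas the paper works directly with the two-block decomposition $\bm_k=\bm_k^{(1)}+\bm_k^{(2)}$ and invokes Lemma~\ref{lem:mono} to keep both blocks' contact degrees controlled going backward.
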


\begin{proof}
Fix $k\in \mathbb{N}$ and represent the vertex $\bm_k$ of $w$ as
\begin{equation}\label{eq:RsubdivisionK}
\bm_k= \bm^{(1)}_k + \bm^{(2)}_k  \qquad 
\big(\mathop{cdeg}(\bm^{(1)}_k) < \mathop{cdeg}(w) \text{ and }\mathop{cdeg}(\bm^{(2)}_k) = 1
\big).
\end{equation}
We use this representation of $\bm_k$ to construct the edges $\bm_{k-1}^{(1)} \xrightarrow{\bd_k| \bd''_k}\bm_{k}^{(1)}$ and $\bm_{k-1}^{(2)} \xrightarrow{\bd''_k| \bd'_k}\bm_k^{(2)}$ in $\Gamma_{\Z^d}$ in a way that $\bm_{k-1}=\bm^{(1)}_{k-1} + \bm^{(2)}_{k-1}$ with $\mathop{cdeg}(\bm^{(1)}_{k-1}) < \mathop{cdeg}(w) \text{ and }\mathop{cdeg}(\bm^{(2)}_{k-1}) = 1$.
 
Since $\mathcal{D}$ is a complete set of coset representatives of $\Z^d/M\Z^d$, there is a unique pair $(\bm_{k-1}^{(1)},\bd_k'')\in \Z^d \times \mathcal{D}$ such that 
\begin{equation}\label{eq:edgedegminusone}
\bm_{k}^{(1)} + \bd_k=M\bm_{k-1}^{(1)} + \bd''_k.
\end{equation} 
and, hence, a unique pair  $(\bm_{k-1}^{(2)},\bb_k)\in \Z^d \times \mathcal{D}$ such that 
\begin{equation}\label{eq:edgedegone}
\bm_{k}^{(2)}+ \bd''_k=M\bm_{k-1}^{(2)} + \bb_k.
\end{equation} 
Adding \eqref{eq:edgedegminusone} and \eqref{eq:edgedegone} this yields
\begin{equation}\label{eq:edgeSum}
\bm_{k}+\bd_k=M(\bm_{k-1}^{(1)} + \bm_{k-1}^{(2)}) + \bb_k.
\end{equation} 
The existence of the edge $\bm_{k-1} \xrightarrow{\bd_k| \bd'_k}\bm_{k}$  is equivalent to 
\begin{equation*}
\bm_{k}+\bd_k=M\bm_{k-1} + \bd_k'.
\end{equation*} 
Comparing this with \eqref{eq:edgeSum} and observing that $\mathcal{D}$ is a complete set of coset representatives of $\Z^d/M\Z^d$, yields that $\bm_{k-1}=\bm_{k-1}^{(1)} + \bm_{k-1}^{(2)}$ and $\bb_k=\bd_k'$. Thus \eqref{eq:edgedegminusone} and \eqref{eq:edgedegone} yield the edges
\[
\bm_{k-1}^{(1)} \xrightarrow{\bd_k| \bd''_k}\bm_{k}^{(1)}\in\Gamma_{\Z^d} \quad\hbox{and}\quad \bm_{k-1}^{(2)} \xrightarrow{\bd''_k| \bd'_k}\bm_k^{(2)}\in\Gamma_{\Z^d}.
\]
Moreover, by Lemma~\ref{lem:mono}, we have $\mathop{cdeg}(\bm^{(1)}_{k-1}) \le \mathop{cdeg}(\bm^{(1)}_{k})< \mathop{cdeg}(w)$  and $\mathop{cdeg}(\bm^{(2)}_{k-1}) = 1$. The geometric interpretation of this construction  is illustrated in Figure~\ref{fig:FigSubdivision}.  
 
\begin{figure}[h]
\begin{tikzpicture}[]
  \pgftext{\includegraphics[width=0.4\textwidth]{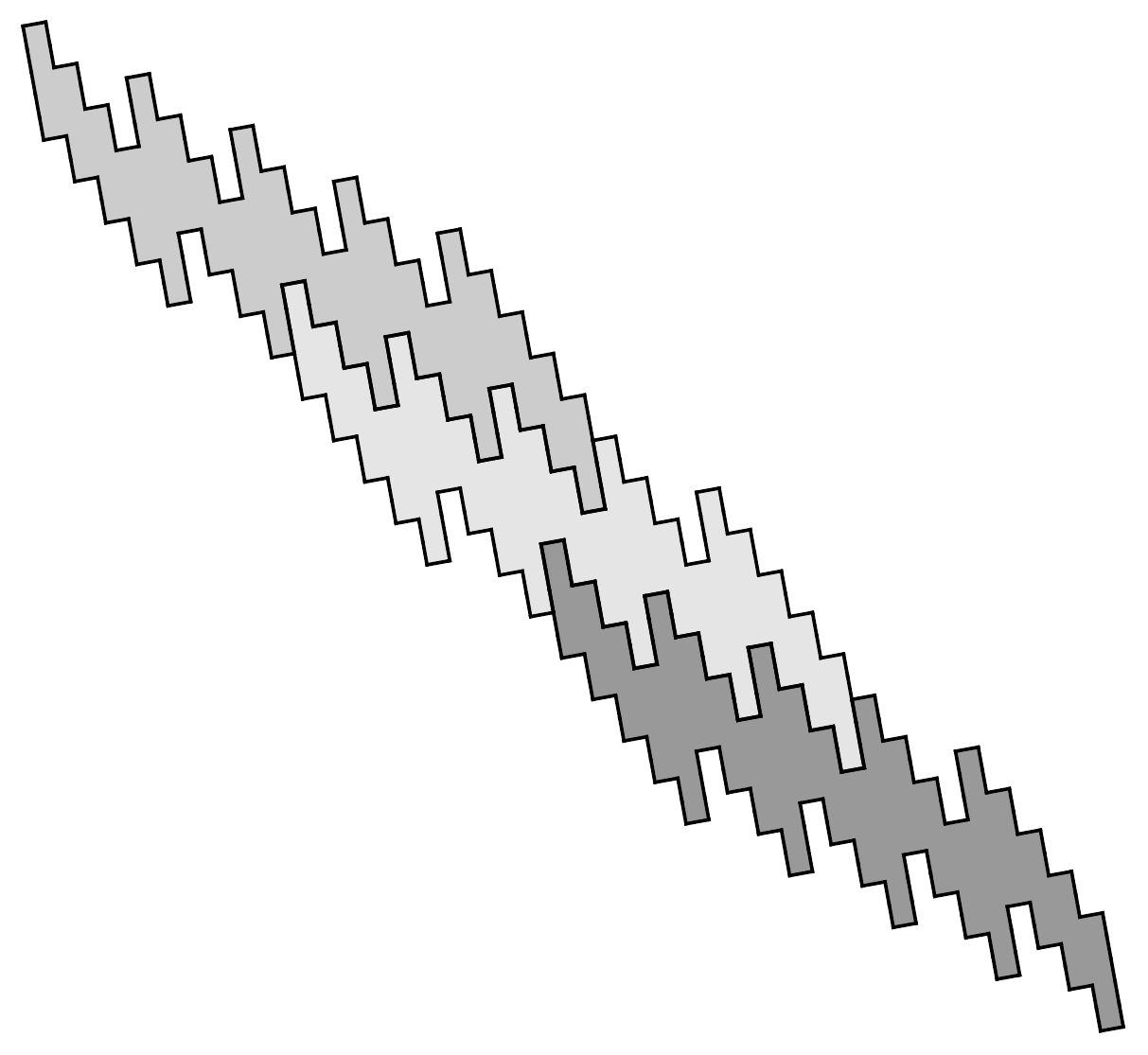}} at (0pt,0pt);
  \node at (2.6,-1.2) {$\mathcal{T}$};
  \node at (1.75,0.2) {$\mathcal{T}+\bm_{k}^{(1)}$};
  \node at (0.85,1.7) {$\mathcal{T}+\bm_{k}^{(1)}+\bm_{k}^{(2)}$};
\end{tikzpicture}
\\
$\Bigg\downarrow$ ${\bx\mapsto M^{-1}(\bx+\bd_k)}$
\\[10pt]
\begin{tikzpicture}[]
  \pgftext{\includegraphics[width=0.4\textwidth]{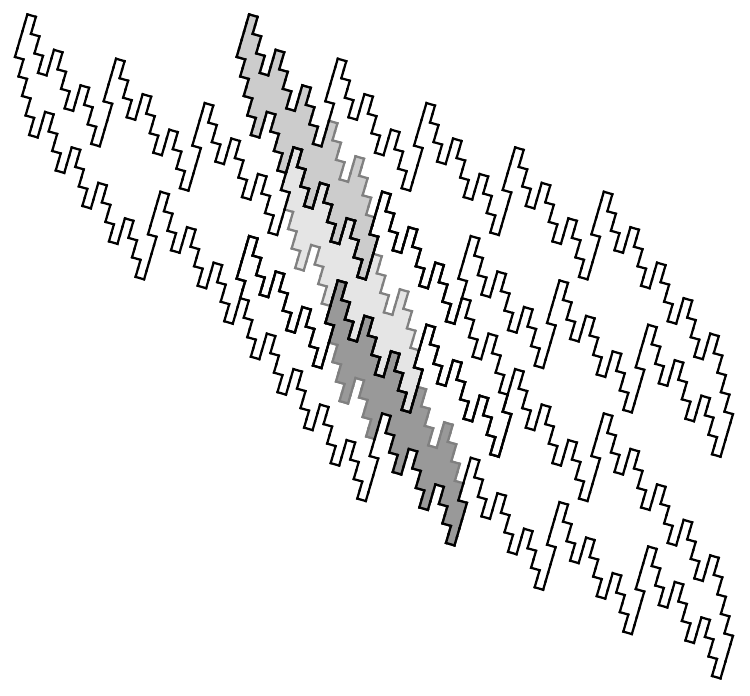}} at (0pt,0pt);
    \node at (0.5,-0.95) {\scriptsize $(1)$};
      \node at (0.15,0.1) {\scriptsize $(2)$};
        \node at (-0.21,1.15) {\scriptsize $(3)$};
          \node at (1,-2) {$\mathcal{T}$};
  \node at (-2.25,0.25) {$\mathcal{T}+\bm_{k-1}^{(1)}$};
  \node at (2,2) {$\mathcal{T}+\bm_{k-1}^{(1)}+\bm_{k-1}^{(2)}$};
\end{tikzpicture}
\caption{Geometric meaning of the representations of the nodes in the edge $\bm_{k-1}^{(1)}+\bm_{k-1}^{(2)}\xrightarrow{\bd_k|\bd'_k}\bm_{k}^{(1)}+\bm_{k}^{(2)}$. Here (1) is the subtile $M^{-1}(\cT+\bd_k)$ of $\cT$, (2) is the subtile $M^{-1}(\cT+\bm_{k}^{(1)}+\bd_k)$ of $\cT+\bm_{k-1}^{(1)}$ and (3) is the subtile $M^{-1}(\cT+\bm_{k}^{(1)}+\bm_{k}^{(2)}+\bd_k)=M^{-1}(\cT+M(\bm_{k-1}^{(1)}+\bm_{k-1}^{(2)})+\bd'_k)$ of $\cT+\bm_{k-1}^{(1)}+\bm_{k-1}^{(2)}$. One can see that if the small tiles are contact neighbors, then also the large tiles are contact neighbors (Drawn for the 3rd approximation.) \label{fig:FigSubdivision}}
\end{figure}

Iterating this we constructed walks 
\begin{align*}
w_{1,k}:\;&\bm_0^{(1)} \xrightarrow{\bd_1| \bd''_1}\bm_1^{(1)} \xrightarrow{\bd_2| \bd''_2} \bm_2^{(1)} \xrightarrow{\bd_3| \bd''_3} \cdots \xrightarrow{\bd_{k-1}| \bd''_{k-1}} \bm_{k}^{(1)},  \\
w_{2,k}:\;&\bm_0^{(2)} \xrightarrow{\bd_1''| \bd'_1} \bm_1^{(2)} \xrightarrow{\bd_2''| \bd'_2}\bm_2^{(2)} \xrightarrow{\bd_3''| \bd'_3} \cdots  \xrightarrow{\bd_{k-1}''| \bd'_{k-1}} \bm_{k}^{(2)}
\end{align*}
in $\Gamma_{\Z^d}$ satisfying $\mathop{cdeg}(w_{1,k})<\mathop{cdeg}(w)$ and $\mathop{cdeg}(w_{2,k})=1$, and $\bm_i=\bm_i^{(1)} + \bm_i^{(2)}$ for each $i\le k$.

Since $\mathop{cdeg}({\bm_i})$ is bounded by $p$, a given $\bm_i$ can have only finitely many representations \eqref{eq:RsubdivisionK}, the bound only depending on $R$ and $p$. Thus, by a Cantor diagonal argument we may choose representations for all $\bm_i$, $i\in \mathbb{N}$, in a way that the representation of $\bm_{i-1}$ is determined by the representation of $\bm_i$ for all $i\in\N$ in the way described above. 
This leads to walks $w_1,w_2 \in \Gamma_{\Z^d}$ satisfying $\mathop{cdeg}(w_1)<\mathop{cdeg}(w)$ and $\mathop{cdeg}(w_2)=1$, and $\bm_k=\bm_k^{(1)} + \bm_k^{(2)}$ for each $k\in \N$.
 
 Because $R$ is a finite set, there are only finitely many elements of $\Z^d$ with bounded contact degree. Thus $w_1,w_2 \in \Gamma_{\Z^d}$ contain infinitely many loops. Hence, by Lemma~\ref{lem:tileendsinloop}, we have $w_1,w_2\in \Gamma_{S\cup \{\mathbf{0}\}}$, and the result is proved.
\end{proof}

\begin{theorem}
Let $\cT=\cT(M,\mathcal{D})$ be a self-affine tile with standard digit set.  Then Algorithm~\ref{alg:Tileneighbor} terminates after finitely many steps and has the neighbor graph $\Gamma_S$ of $\cT$ as its output.
\end{theorem}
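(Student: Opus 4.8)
The plan is to show that the sequence of node sets $(R_q)_{q\ge 1}$ produced by Algorithm~\ref{alg:Tileneighbor} is nondecreasing, bounded by the finite set $S\cup\{\mathbf{0}\}$, and stabilizes exactly at $S\cup\{\mathbf{0}\}$; deleting the node $\mathbf{0}$ then yields $\Gamma_S$. Throughout I would use repeatedly that, since $\Gamma_{R_{q-1}+R}$ is a full induced subgraph of $\Gamma_{\Z^d}$ and $\mathop{Red}$ deletes only nodes together with their incident edges, the graph $\Gamma_{R_q}=\mathop{Red}(\Gamma_{R_{q-1}+R})$ is again the full induced subgraph on its surviving node set $R_q$; and that, in a finite graph, the surviving nodes of $\mathop{Red}$ are precisely those lying on a walk that ends in a loop. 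I would also record that $\mathbf{0}\in R$ (it carries the self-loop $\mathbf{0}\xrightarrow{\bd|\bd}\mathbf{0}$) and that $\Gamma_R\subseteq\Gamma_{S\cup\{\mathbf{0}\}}$, as already noted in the text.

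First I would establish the easy inclusion $\Gamma_{R_q}\subseteq\Gamma_{S\cup\{\mathbf{0}\}}$ for every $q$: each surviving node of $\mathop{Red}(\Gamma_{R_{q-1}+R})$ lies on a walk in $\Gamma_{R_{q-1}+R}\subseteq\Gamma_{\Z^d}$ ending in a loop, hence belongs to $S\cup\{\mathbf{0}\}$ by Lemma~\ref{lem:tileendsinloop}. This also bounds the whole sequence by the finite set $S\cup\{\mathbf{0}\}$. Next I would prove monotonicity $R_{q-1}\subseteq R_q$: the graph $\Gamma_{R_{q-1}}$ is reduced (it is either $\Gamma_R$ or an output of $\mathop{Red}$), so each of its nodes already lies on a walk ending in a loop inside $\Gamma_{R_{q-1}}\subseteq\Gamma_{R_{q-1}+R}$ and thus survives the reduction. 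A nondecreasing sequence bounded by a finite set stabilizes, and once $R_{q}=R_{q-1}$ the recursion forces $R_{q'}=R_q$ for all $q'\ge q$; this gives termination after finitely many steps.

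The heart of the argument is the reverse inclusion $S\cup\{\mathbf{0}\}\subseteq R_q$ at stabilization, for which I would prove by induction on $q$ the statement: \emph{every infinite walk $w$ in $\Gamma_{S\cup\{\mathbf{0}\}}$ with $\mathop{cdeg}(w)\le q$ is a walk in $\Gamma_{R_q}$}. The base case $q=1$ is immediate, since $\mathop{cdeg}(w)\le 1$ means $w$ is a walk in $\Gamma_R=\Gamma_{R_1}$. For the inductive step with $\mathop{cdeg}(w)=q>1$, I would apply Proposition~\ref{lem:2} to split $w$ into walks $w_1,w_2$ in $\Gamma_{S\cup\{\mathbf{0}\}}$ with $\mathop{cdeg}(w_1)<q$, $\mathop{cdeg}(w_2)=1$, and $\bm_k=\bm_k^{(1)}+\bm_k^{(2)}$ for all $k$. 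By the induction hypothesis $w_1$ lies in $\Gamma_{R_{q-1}}$, so each $\bm_k^{(1)}\in R_{q-1}$, while $w_2$ lies in $\Gamma_R$, so each $\bm_k^{(2)}\in R$; hence every node $\bm_k=\bm_k^{(1)}+\bm_k^{(2)}$ lies in the Minkowski sum $R_{q-1}+R$, and $w$ is an infinite walk in $\Gamma_{R_{q-1}+R}$. As this graph is finite, every node of $w$ lies on a walk ending in a loop inside $\Gamma_{R_{q-1}+R}$ and therefore survives $\mathop{Red}$, so $w$ is a walk in $\Gamma_{R_q}$, completing the induction.

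Finally I would combine the pieces. Fix $p$ with $\mathop{cdeg}(\bm)\le p$ for all $\bm\in S$. Given $\bm\in S$, Lemma~\ref{lem:tileendsinloop} provides an infinite walk through $\bm$ in $\Gamma_{S\cup\{\mathbf{0}\}}$ ending in a loop; its contact degree is at most $p$, so the induction above places $\bm$ in $R_p$. Together with $\mathbf{0}\in R\subseteq R_p$ this yields $S\cup\{\mathbf{0}\}\subseteq R_p$, and with the easy inclusion $R_p=S\cup\{\mathbf{0}\}$; monotonicity then forces the sequence to have stabilized at this value, so the algorithm outputs $\Gamma_{R_p\setminus\{\mathbf{0}\}}=\Gamma_S$. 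I expect the main obstacle to be the inductive step: one must check that the decomposition from Proposition~\ref{lem:2} keeps the split walks inside $\Gamma_{R_{q-1}}$ and $\Gamma_R$ respectively, and then argue that summing them back up lands every node in the \emph{reduced} corona $\Gamma_{R_q}$ and not merely in $\Gamma_{R_{q-1}+R}$ — this is exactly where the ``ends in a loop'' characterization of $\mathop{Red}$ and the finiteness of the corona are essential.
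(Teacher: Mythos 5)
Your proof is correct and takes essentially the same route as the paper: the heart in both cases is the induction on contact degree, using Proposition~\ref{lem:2} to split a walk of degree $q$ into a walk of smaller degree (handled by the induction hypothesis) and a walk in $\Gamma_R$, so that every node of the original walk lands in the corona $R_{q-1}+R$ and survives the reduction $\mathop{Red}$. The extra bookkeeping you supply --- the reverse inclusion $\Gamma_{R_q}\subseteq\Gamma_{S\cup\{\mathbf{0}\}}$, monotonicity of $(R_q)$ via $\mathbf{0}\in R$, and the explicit stabilization/termination argument --- is left implicit in the paper's proof, and making it explicit is a genuine (if modest) improvement in rigor rather than a different approach.
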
 

\begin{proof}
Let $\Gamma_{R_1}=\Gamma_{R}$ and $\Gamma_{R_q}=\mathop{Red}(\Gamma_{R_{q-1}+R})$
We prove by induction that $\Gamma_{R_q}$ contains all infinite walks $w$ of $\Gamma_S$ having $\mathop{cdeg}(w) \le q$. 

Let $w$ be an infinite walk in $\Gamma_{S\cup \{\mathbf{0}\}}$ with $\mathop{cdeg}(w) = 1$ then, by definition, $w$ is a walk whose nodes are contained in $R$. Thus $w$ is a walk in $\Gamma_R$ and, hence, in $\Gamma_{R_1}$. This constitutes the induction start.

To prove the induction step we assume that $\Gamma_{R_{q-1}}$ consists of all infinite walks of $\Gamma_{S\cup \{\mathbf{0}\}}$ having contact degree less than or equal to $q-1$. Let $w:\; \bm_0  \xrightarrow{\bd_1| \bd'_1} \bm_1  \xrightarrow{\bd_2| \bd'_2}\bm_2  \xrightarrow{\bd_3| \bd'_3} \cdots$ be an infinite walk in $\Gamma_{S\cup \{\mathbf{0}\}}$ having $\mathop{cdeg}(w) \le q$. Then, by Proposition~\ref{lem:2} and by the induction hypothesis we know that there exist infinite walks $w_1:\;\bm_0^{(1)} \xrightarrow{\bd_1| \bd''_1} \bm_1^{(1)}  \xrightarrow{\bd_2| \bd''_2} \bm_2^{(1)}  \xrightarrow{\bd_3| \bd''_3} \cdots
\in \Gamma_{R_{q-1}}$ and $w_2:\;\bm_0^{(2)} \xrightarrow{\bd_1''| \bd'_1} \bm_1^{(2)}  \xrightarrow{\bd_2''| \bd'_2}\bm_2^{(2)}  \xrightarrow{\bd_3''| \bd'_3} \cdots \in \Gamma_{R_{1}}$ such that $\bm_k=\bm_k^{(1)}+\bm_k^{(2)}$ for each $k\in \N$. Because $k$ was arbitrary, $w$ is an infinite walk in $\Gamma_{R_{q-1}+R}$ and, therefore, an infinite walk in $\Gamma_{R_q}=\mathop{Red}(\Gamma_{R_{q-1}+R})$. 

Since there is $p\in\N$ such that each walk $w$ in $\Gamma_S$ satisfies $\mathop{cdeg}(w) \le p$, we conclude that $\Gamma_S=\Gamma_{R_{p+1}}=\Gamma_{R_{p}}$. Thus the algorithm terminates after finitely many steps and returns $\Gamma_S$, as desired.
\end{proof}

\section{Pisot substitutions, Rauzy fractals, and self-replicating tilings}\label{sec:RauzyBasic}

\subsection{Substitutions}
For $d\ge 2$ we fix the alphabet $\cA=\{1,\ldots, d\}$ and write $\cA^*$ for the free monoid of finite words over $\cA$ with concatenation.  For $w\in \cA^*$ we denote by $|w|$ the number of letters in $w$ and call it the {\em length} of $w$. The word of length $0$ is called the {\em empty word} and we denote it by $\epsilon$. For $i\in \cA$, we write $|w|_i$ for the number of occurrences of the letter $i$ in $w$. We define the {\em abelianization mapping} $\mathbf{l}:\cA^* \to \N^d$ by $\mathbf{l}(w)=(|w|_i)_{i\in\cA}$. 

A {\em substitution} is an endomorphism of $\cA^*$ having the property that each letter has a nonempty image. To a substitution $\sigma$ we associate the {\em incidence matrix} $M$ defined as the unique matrix $M$ satisfying  $\mathbf{l}(\sigma(i))=M\mathbf{l}(i)$ for each $i\in\cA$. If the characteristic polynomial of $M$ is the minimal polynomial of a Pisot unit, we call $\sigma$ a {\it Pisot substitution}.  

\subsection{The Rauzy fractal and its subtiles} We give a definition of the Rauzy fractal related to a Pisot substitution. We do this very briefly, for more details on Rauzy fractals we refer for instance to \cite{Arnoux-Ito:01,CANTBST,Ito-Rao:06,SirventWang02}. 

\begin{definition}[Prefix-suffix graph; {{\em cf.~e.g.}~\cite{Canterini-Siegel:01a}}]\label{def:presufgraph}
Let $\sigma$ be a substitution over the alphabet $\cA$ and let
\[
\cP= \big\{(p,i,s)\in \cA^*\times \cA \times \cA^*  \;:\;  \text{there is }j\in\cA\text{ with }\sigma(j)=pis\big\}.
\]
The {\em prefix-suffix graph} of $\sigma$ is the directed labeled graph whose vertices are the elements of $\cA$ with a directed edge from $i\in\cA$ to $j\in\cA$ labeled by $(p,i,s)\in \cP$ if and only if $\sigma(j)=pis$. 
\end{definition}

\begin{example}[Prefix-suffix graph] 
Let $\sigma_1$ and $\sigma_2$ be  substitutions over $\cA=\{1,2,3\}$ given in the following way.
$$\sigma_1: \left\{\begin{aligned}
1&\longrightarrow 1112\\
2&\longrightarrow 113\\
3&\longrightarrow 1\\
\end{aligned}
\right.
\quad \quad\quad\quad
\sigma_2: \left\{\begin{aligned}
1&\longrightarrow 112\\
2&\longrightarrow 1113\\
3&\longrightarrow 1\\
\end{aligned}
\right.
$$ 
Definition~\ref{def:presufgraph} can now be applied to $\sigma_1$ and $\sigma_2$ to construct the prefix-suffix graphs depicted in Figure~\ref{Pre-Sufgraph}.
\begin{figure}[h] 
\hskip 0.1cm \xymatrix{
 *++[o][F]{1}  
\ar@(l,u)[]^{(\epsilon,1,112),(1,1,12),(11,1,2)}\ar@/^{0ex}/[rrr]_{(\epsilon,1,13),(1,1,3)}\ar@/^{-3ex}/[drr]^{(\epsilon,1,\epsilon)}& 
& &*++[o][F]{2}\ar@/_{5ex}/[lll]_{(111,2,\epsilon)}\\
&&*++[o][F]{3}\ar@/^{-3ex}/[ru]^{(11,3,\epsilon)} &
}
\xymatrix{
 *++[o][F]{1}  
\ar@(l,u)[]^{(\epsilon,1,12),(1,1,2)}\ar@/^{0ex}/[rrr]_{(\epsilon,1,113),(1,1,13),(11,1,3)}\ar@/^{-3ex}/[drr]^{(\epsilon,1,\epsilon)}& 
& &*++[o][F]{2}\ar@/_{5ex}/[lll]_{(11,2,\epsilon)}\\
&&*++[o][F]{3}\ar@/^{-3ex}/[ru]^{(111,3,\epsilon)} &
}

\caption{Prefix-suffix graph for $\sigma_1$ (left) and $\sigma_2$ (right).}\label{Pre-Sufgraph}
\end{figure}
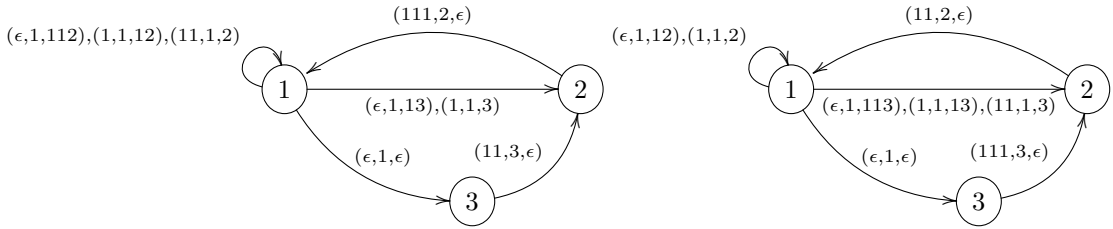
It is easy to check that $\sigma_1 $ and $\sigma_2$ are both Pisot substitutions. Indeed, the characteristic equations for their incidence matrices are given by $x^3-3x^2-2x-1=0$ and $x^3-2x^2-3x-1=0$, respectively. Both of these polynomials have a Pisot unit as dominant root.

We will use the substitutions $\sigma_1$ and $\sigma_2$ throughout the paper as our running examples.
\end{example}

In the sequel, for the orthogonal complement of a vector $\bw\in\R^d$ we write $\bw^\bot$. Let $\sigma$ be a Pisot substitution with incidence matrix $M$ and let $\bu$ and $\bv$ be the dominant right and left eigenvector of $M$, respectively. The matrix $M$ leaves the space $\bv^\bot$ invariant and we write $\mathbf{h}=M|_{\bv^\bot}$ for the restriction of the linear mapping $M$ to $\bv^\bot$. Then the linear mapping $\mathbf{h}$ is a uniform contraction on $\bv^{\bot}$. We write $\pi:\, \R^d\to \bv^\bot$ for the projection along $\bu$ onto $\bv^\bot$ .

Using these preparations we define Rauzy fractals and their subtiles as follows (see for instance \cite[Section~4]{SirventWang02} or \cite[Theorem~5.26]{CANTBST}).

\begin{definition}[Rauzy fractal and its subtiles]\label{def:Rauzy}
Let $\sigma$ be a Pisot substitution over the alphabet~$\cA$. Then the {\em Rauzy fractal} $\cR$ of $\sigma$ is defined as $\cR=\bigcup_{i\in \cA} \cR(i)$, where the {\em subtiles} $\cR(i)$ are the nonempty compact sets that are uniquely defined by the graph-directed iterated function system
\begin{equation}\label{eq:RauzySetEquation}
\cR(i) = \bigcup_{\begin{subarray}{c}  j\in \cA \\ i \xrightarrow{(p,i,s)}j \end{subarray}}
\mathbf{h} \cR(j) + \pi\mathbf{l}(p)
\qquad{(i\in \cA)}.
\end{equation}
The union is extended over all edges of the prefix-suffix graph of $\sigma$ leading away from the vertex~$i$.
\end{definition}

Note that by this definition the Rauzy fractal of a Pisot substitution $\sigma$ is a subset of $\bv^\bot$, where $\bv$ is the dominant left eigenvector of the incidence matrix of $\sigma$.

\begin{example}
For $\sigma_1$, by Figure~\ref{Pre-Sufgraph}, the set equations in \eqref{eq:RauzySetEquation} read
\begin{equation*}
\begin{aligned}
\cR(1)&=\mathbf{h} \cR(1)\cup \big(\mathbf{h} \cR(1)+\pi\mathbf{l}(1)\big)\cup \big(\mathbf{h} \cR(1)+\pi\mathbf{l}(11)\big)\cup \mathbf{h} \cR(2)\cup \big(\mathbf{h} \cR(2)+\pi\mathbf{l}(1)\big)\cup\mathbf{h} \cR(3),\\
\cR(2)&=\mathbf{h} \cR(1)+\pi\mathbf{l}(111),\\
\cR(3)&=\mathbf{h} \cR(2)+\pi\mathbf{l}(11).\\
\end{aligned}
\end{equation*}
Their solution corresponds to the Rauzy fractal drawn on the left-hand side of  Figure \ref{Sigma_Rauzy}.  
The set equations for the subtiles associated with $\sigma_2$ can be set up similarly. The corresponding Rauzy fractal is drawn on the right-hand side of  Figure~\ref{Sigma_Rauzy}. 

\begin{figure}[h]
\includegraphics[width=4.5 cm]{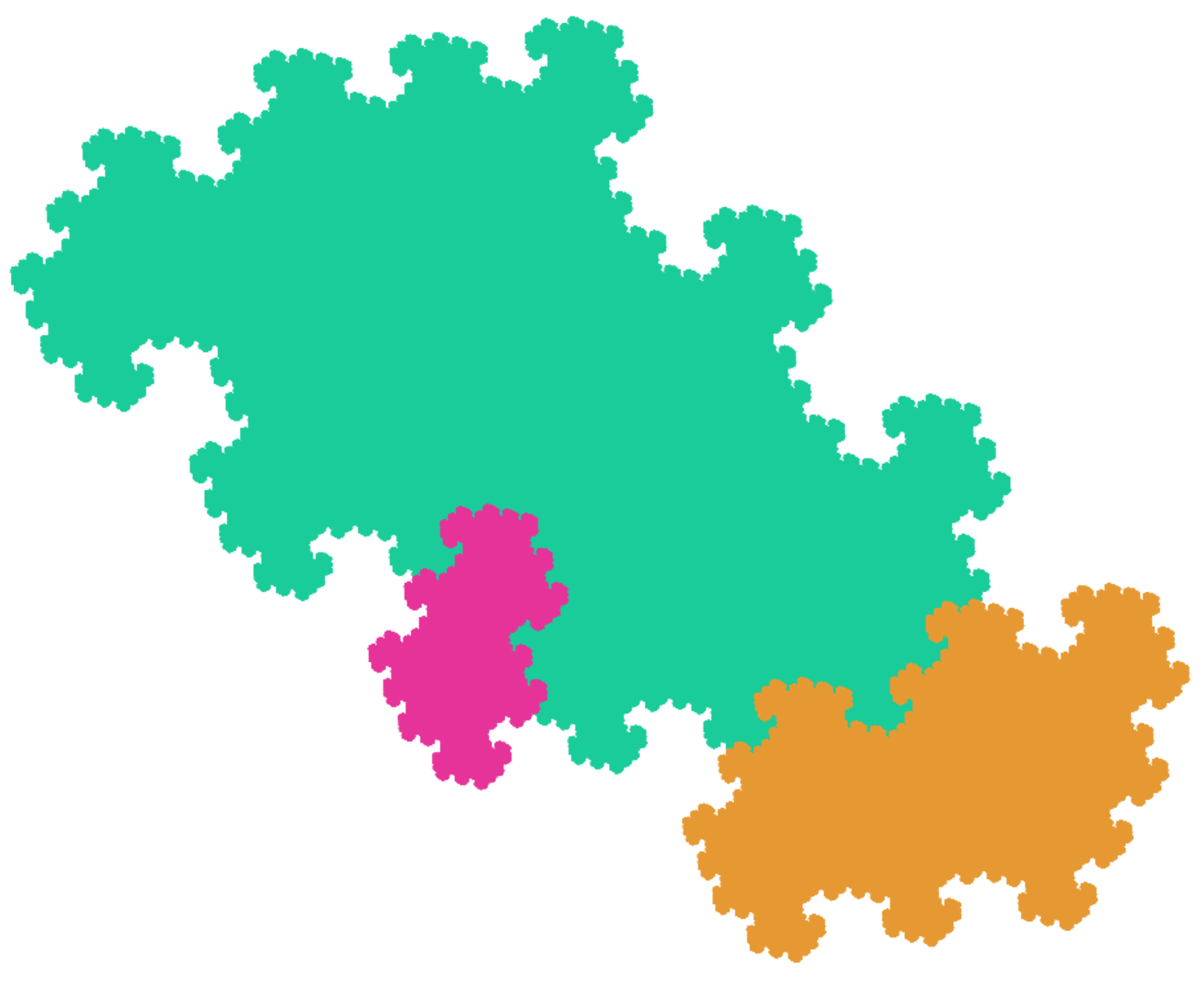} \quad\quad\quad\quad  \includegraphics[width= 4.5 cm]{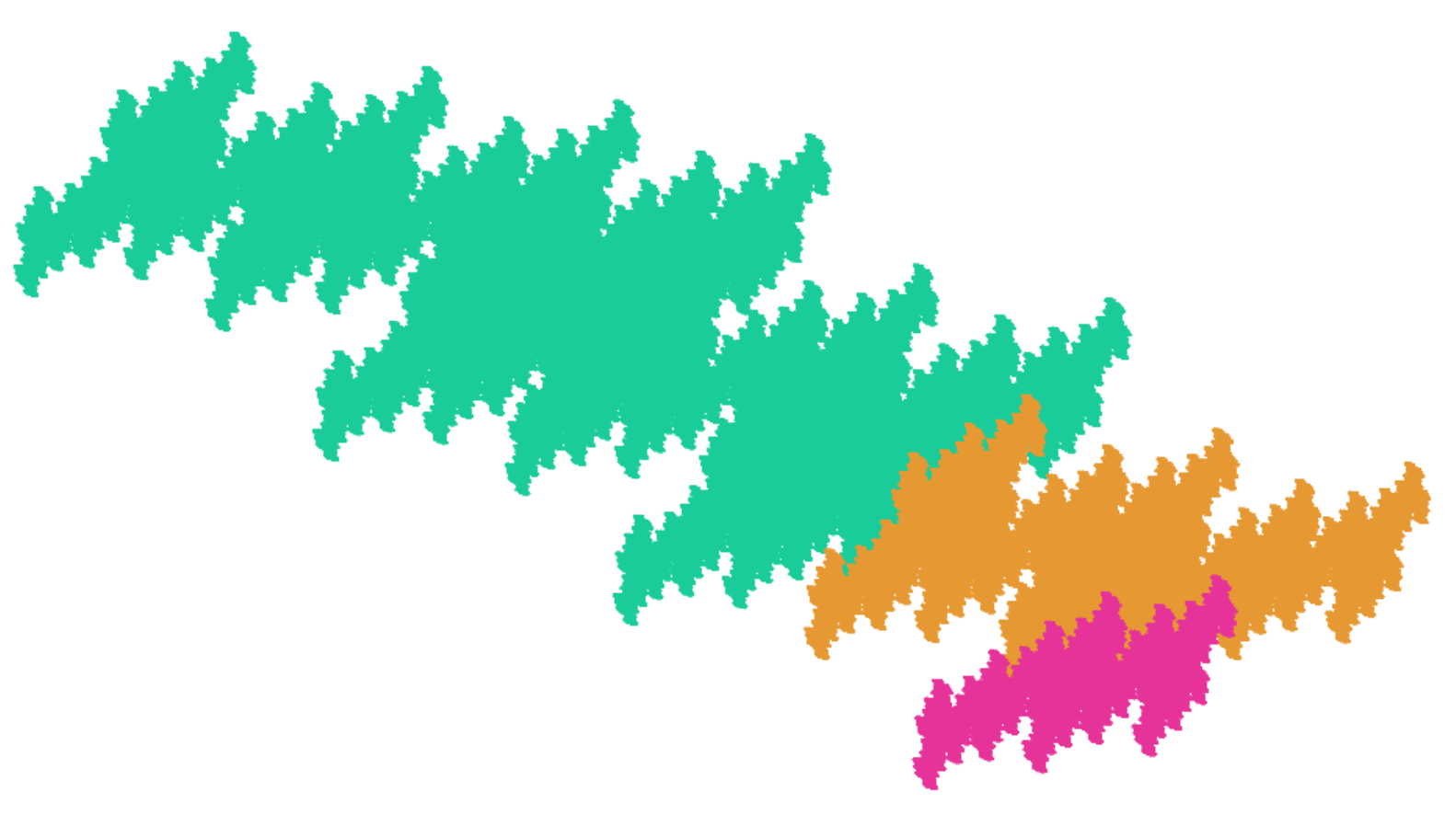}
\caption{Rauzy fractals and their subtiles associated with $\sigma_1$ (left) and $\sigma_2$ (right).}\label{Sigma_Rauzy}
\end{figure}
\end{example}

\subsection{The self-replicating tiling set and dual substitutions}
It turns out that subtiles of Rauzy fractals induce multi-tilings. These multi-tilings will have a self-replicating structure that is governed by the underlying substitution. In particular, let $\sigma$ be a Pisot substitution over the alphabet $\cA$ and use the notations from above. Let $\be_1,\ldots, \be_d$ be the standard basis vectors. The {\em stepped hypersurface} associated to $\sigma$ is given by
\[
H_\sigma = \big\{[\bx,i] \in \Z^d \times \cA \;:\; 
0 \le \langle \bx,\bv \rangle <  \langle \be_i,\bv \rangle
\big\}
\]
(see {\em e.g.} \cite[Equation (2.2)]{Ito-Rao:06}; recall that $\bv$ is  the left eigenvector of the incidence matrix $M$ of $\sigma$). A pair $[\bx,i] \in \Z^d \times \cA$ is called a {\em face} and can be regarded as a point of ``color'' $i$. However, we can also interpret a face $[\bx,i]$ as the cube given by
\begin{equation}\label{eq:face=cube}
[\bx,i] = \{ x+\vartheta_1\be_1 + \cdots+ \vartheta_{i-1}\be_{i-1}+  \vartheta_{i+1}\be_{i+1} + \cdots +\vartheta_d\be_d\;:\; \vartheta_j\in [0,1] \text{ for } j\in\cA\setminus\{i\} \}.
\end{equation}
If we project the stepped hypersurface onto $\bv^\bot$ we get the {\it self-replicating translation set}
\[
\pi H_\sigma = \big\{[\pi(\bx),i]  \;:\; [\bx, i] \in H_\sigma
\big\}.
\]
The ``projected face'' $[\pi (\bx),i]$ can be regarded as the parallelotope
\[
[\pi(\bx),i] = \pi\big(\big\{\bx+ \vartheta_1\be_1 + \cdots+ \vartheta_{i-1}\be_{i-1}+  \vartheta_{i+1}\be_{i+1} + \cdots +\vartheta_d\be_d\;:\; \vartheta_j\in [0,1] \text{ for } j\in\cA\setminus\{i\} \big\}\big).
\]
These parallelotopes are analogs of the fundamental mesh of a lattice and its translates in the sense that they tesselate $\bv^{\bot}$ (see for instance~\cite{ABFJ:07,Berthe-Vuillon:00}).
\begin{example} 
In Figure \ref{StepSurface} we provide patches of the stepped surface of the substitutions $\sigma_1$ and $\sigma_2$. The ``white face'' in the figure shows squares of the form $[\bx,1]$, the ``grey face'' shows squares of the form $[\bx,2]$ and the ``black face''  shows squares of the form $[\bx,3]$.
\begin{figure}[h]
\includegraphics[width=4.5 cm]{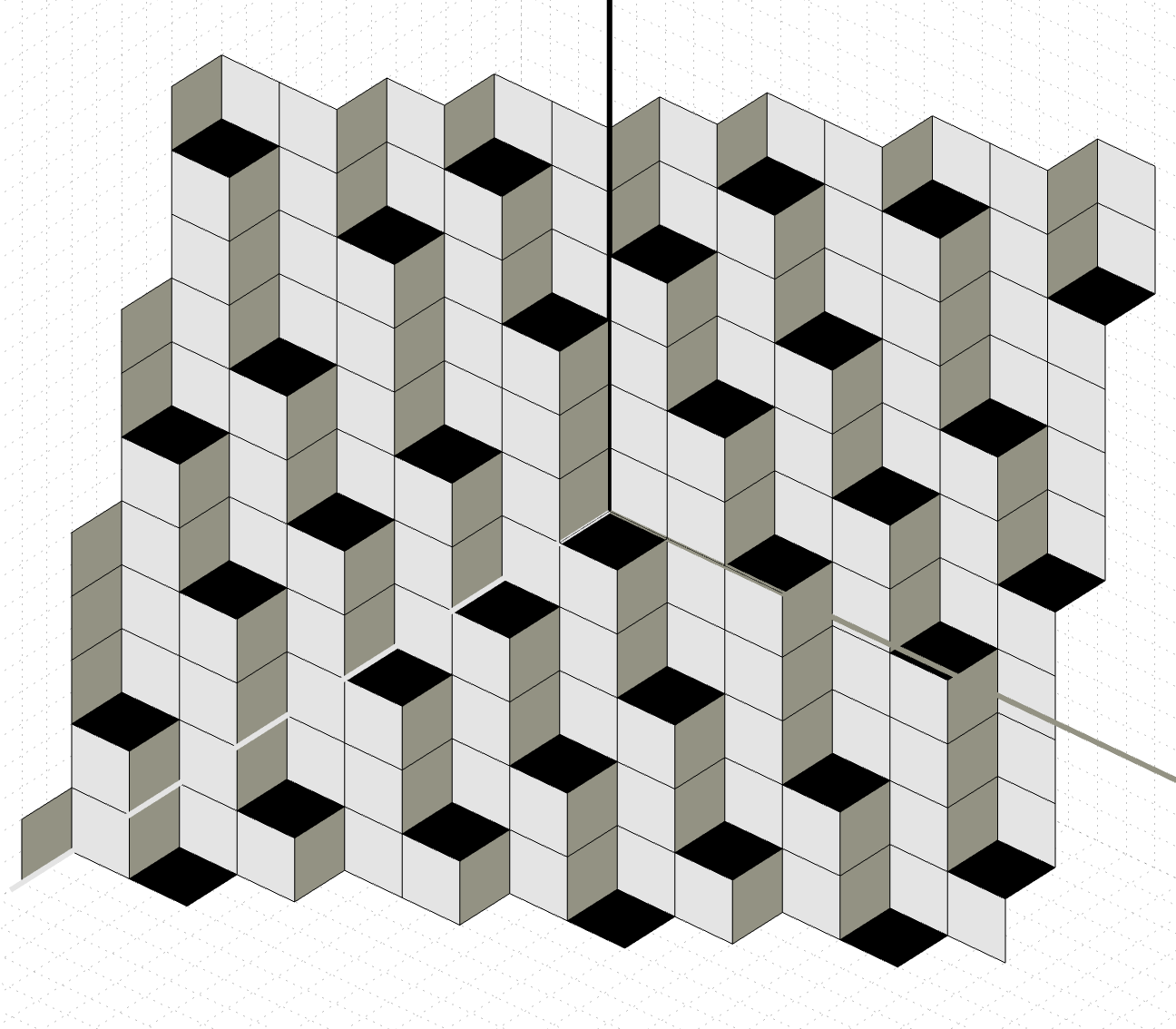} \quad\quad\quad\quad  \includegraphics[width= 4.5 cm]{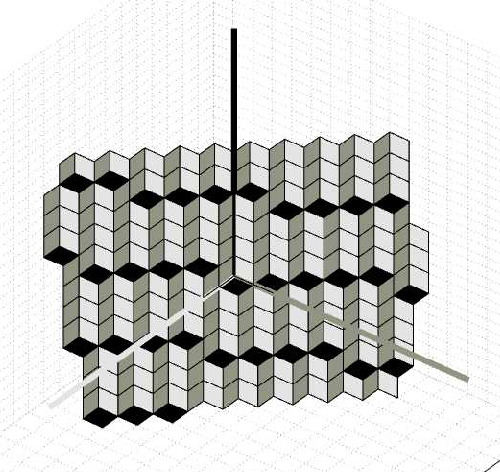}\\
\caption{The stepped surface for $\sigma_1$ (left) and $\sigma_2$ (right), respectively.
} \label{StepSurface}
\end{figure}
\end{example}

Let $\sigma$ be a Pisot substitution. In \cite[Section~2]{Arnoux-Ito:01} a ``dual'' of $\sigma$, acting on subsets of $\mathbb{Z}^d\times\cA$ was defined. Following \cite[Section~2.4]{Ito-Rao:06} we define this dual in the following way. For a singleton $\{[\bx,i]\}\subset \mathbb{Z}^d\times\cA$ let
\[ 
\sigma^*\{[\bx,i]\}=
\big\{[M^{-1}(\bx+\mathbf{l}(p)),j]\;:\; i\xrightarrow{(p,i,s)}j \text{ is an edge in the prefix-suffix graph of $\sigma$} \big\},
\]
and for $K\subset \mathbb{Z}^d\times\cA$ put
\[
\sigma^*(K) =\bigcup_{[\bx,i]\in K}\sigma^*\{[\bx,i]\}.
\]
For the sake of simplicity we will often write $\sigma^*[\bx,i]$ instead of $\sigma^*\{[\bx,i]\}$. It follows from the definition that $(\sigma^*)^n = (\sigma^n)^*$, {\it i.e.}, that the $n$-th iteration of the dual of $\sigma$ is the dual of the $n$-th iteration of $\sigma$. Moreover, it was shown in \cite[Section~3]{Arnoux-Ito:01} (see also \cite[Theorem~1.5]{EiItoRao06}) that $H_\sigma$ is invariant under $\sigma^*$ in the following sense.

\begin{lemma}\label{lem:AItiling}
Let $\sigma$ be a Pisot substitution. Then the following assertions hold.
\begin{itemize}
\item[(i)] $\sigma^*(H_\sigma)=H_\sigma$.
\item[(ii)] If $[\bx,i],[\by,j]\in H_\sigma$ are distinct, then $\sigma^*[\bx,i] \cap \sigma^*[\by,j] = \emptyset$.
\end{itemize}
\end{lemma}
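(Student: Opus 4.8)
The plan is to reduce both statements to a single scalar bookkeeping via the height function $\bx\mapsto\langle\bx,\bv\rangle$. First I would record the two facts that make everything work: since $\bv$ is the dominant left eigenvector of $M$ with Pisot eigenvalue $\beta>1$, we have $M^\top\bv=\beta\bv$, whence $\langle M^{-1}\bz,\bv\rangle=\beta^{-1}\langle\bz,\bv\rangle$ for all $\bz$; and, by Perron--Frobenius, $\bv$ has strictly positive entries, so $\langle\be_a,\bv\rangle>0$ and $\langle\mathbf{l}(w),\bv\rangle\ge0$ for every letter $a$ and word $w$, with equality only for the empty word. I would also note the identity $\langle\mathbf{l}(\sigma(j)),\bv\rangle=\langle M\be_j,\bv\rangle=\beta\langle\be_j,\bv\rangle$.

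For the inclusion $\sigma^*(H_\sigma)\subseteq H_\sigma$ in (i), take $[\bx,i]\in H_\sigma$ and an edge $i\xrightarrow{(p,i,s)}j$, so that $\sigma(j)=pis$ and $\mathbf{l}(\sigma(j))=\mathbf{l}(p)+\be_i+\mathbf{l}(s)$. The produced face is $[\by,j]$ with $\by=M^{-1}(\bx+\mathbf{l}(p))$, so $\langle\by,\bv\rangle=\beta^{-1}(\langle\bx,\bv\rangle+\langle\mathbf{l}(p),\bv\rangle)$. The lower bound $\langle\by,\bv\rangle\ge0$ is immediate from $\langle\bx,\bv\rangle\ge0$ and positivity. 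For the upper bound I would multiply the target inequality $\langle\by,\bv\rangle<\langle\be_j,\bv\rangle$ by $\beta$ and use $\beta\langle\be_j,\bv\rangle=\langle\mathbf{l}(p),\bv\rangle+\langle\be_i,\bv\rangle+\langle\mathbf{l}(s),\bv\rangle$; after cancelling $\langle\mathbf{l}(p),\bv\rangle$ it reduces to $\langle\bx,\bv\rangle<\langle\be_i,\bv\rangle+\langle\mathbf{l}(s),\bv\rangle$, which holds because $[\bx,i]\in H_\sigma$ gives $\langle\bx,\bv\rangle<\langle\be_i,\bv\rangle$ and $\langle\mathbf{l}(s),\bv\rangle\ge0$.

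The heart of the proof is a \emph{unique ancestor} statement that yields at once the reverse inclusion in (i) and the disjointness in (ii). Fix a target face $[\by,k]$ and write $\sigma(k)=a_1\cdots a_\ell$. A face $[\bx,i]$ satisfies $[\by,k]\in\sigma^*[\bx,i]$ exactly when, for some position $r$, one has $i=a_r$, $p=a_1\cdots a_{r-1}$ and $\bx=M\by-\mathbf{l}(p)$; thus the potential ancestors of $[\by,k]$ are indexed by the positions $r\in\{1,\dots,\ell\}$. Writing $S_r=\sum_{t=1}^{r}\langle\be_{a_t},\bv\rangle$ (with $S_0=0$), a short computation gives $\langle\bx,\bv\rangle=\beta\langle\by,\bv\rangle-S_{r-1}$, so the ancestor at position $r$ lies in $H_\sigma$ precisely when $S_{r-1}\le\beta\langle\by,\bv\rangle<S_r$. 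Since the entries of $\bv$ are positive, the numbers $0=S_0<S_1<\cdots<S_\ell=\beta\langle\be_k,\bv\rangle$ are strictly increasing and the half-open intervals $[S_{r-1},S_r)$ partition $[0,\beta\langle\be_k,\bv\rangle)$.

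I would then read off all remaining claims from this partition. If $[\by,k]\in H_\sigma$ then $\beta\langle\by,\bv\rangle\in[0,\beta\langle\be_k,\bv\rangle)$ lies in exactly one interval $[S_{r-1},S_r)$, so $[\by,k]$ has exactly one ancestor in $H_\sigma$; this proves $H_\sigma\subseteq\sigma^*(H_\sigma)$ and hence (i), and the uniqueness of the ancestor gives (ii), since a face common to $\sigma^*[\bx,i]$ and $\sigma^*[\by,j]$ with both sources in $H_\sigma$ would have two ancestors in $H_\sigma$. The main obstacle, and the only place requiring genuine care, is the bookkeeping of the prefix-suffix decompositions: one must match each edge of the prefix-suffix graph into $k$ with a unique letter position of $\sigma(k)$ and verify the telescoping identity for $\langle\bx,\bv\rangle$, and one must invoke the strict positivity of $\bv$ (Perron--Frobenius) to guarantee that the intervals $[S_{r-1},S_r)$ are nonempty and tile $[0,\beta\langle\be_k,\bv\rangle)$ without gaps or overlaps.
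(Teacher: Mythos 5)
Your proof is correct, but it differs from the paper in a basic way: the paper does not prove this lemma at all. It is quoted as a known result, with the proof delegated to \cite[Section~3]{Arnoux-Ito:01} and \cite[Theorem~1.5]{EiItoRao06}. Your argument is a self-contained replacement for that citation, and it is sound: the identity $\langle M^{-1}\bz,\bv\rangle=\beta^{-1}\langle\bz,\bv\rangle$ (from $M^{\top}\bv=\beta\bv$) together with strict positivity of $\bv$ reduces both assertions to the fact that the half-open intervals $[S_{r-1},S_r)$, indexed by the letter positions of $\sigma(k)$, tile $[0,\beta\langle\be_k,\bv\rangle)$; membership of the candidate ancestor $[M\by-\mathbf{l}(a_1\cdots a_{r-1}),a_r]$ in $H_\sigma$ is exactly the condition $S_{r-1}\le\beta\langle\by,\bv\rangle<S_r$, so each face of $H_\sigma$ has one and only one $\sigma^*$-preimage face in $H_\sigma$, which yields (i) (combined with your direct verification of $\sigma^*(H_\sigma)\subseteq H_\sigma$) and (ii) simultaneously. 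What your route buys is transparency and self-containedness at the cost of length; what the paper's route buys is brevity, since this invariance is standard in the Rauzy fractal literature. Two points you should make explicit to close the argument fully: first, Perron--Frobenius gives strict positivity of $\bv$ only for irreducible nonnegative matrices, and here irreducibility of $M$ must be deduced from the hypothesis that its characteristic polynomial is the minimal polynomial of a Pisot unit (a reducible nonnegative matrix is conjugate by a permutation to a block triangular matrix, so its characteristic polynomial factors nontrivially over $\Z$, contradicting irreducibility of a minimal polynomial); second, the existence of at least one ancestor position uses that $\sigma(k)$ is a nonempty word, which is part of the paper's definition of a substitution.
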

 \begin{figure}[h]
\includegraphics[width=3.5 cm]{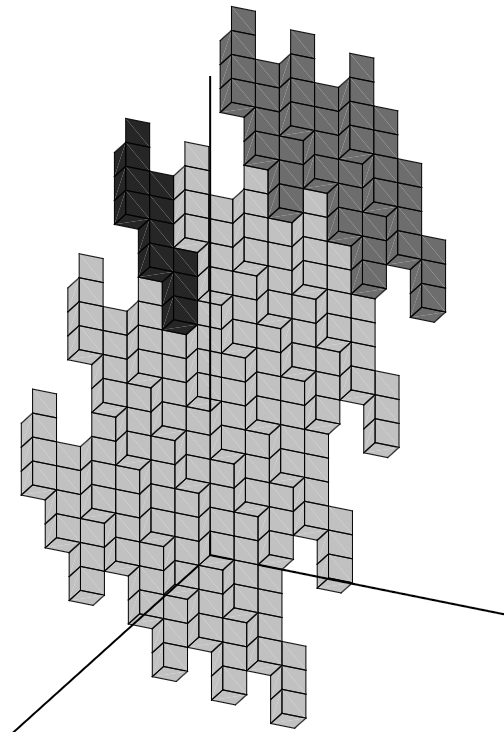} \quad\quad\quad\quad  \includegraphics[width= 3 cm]{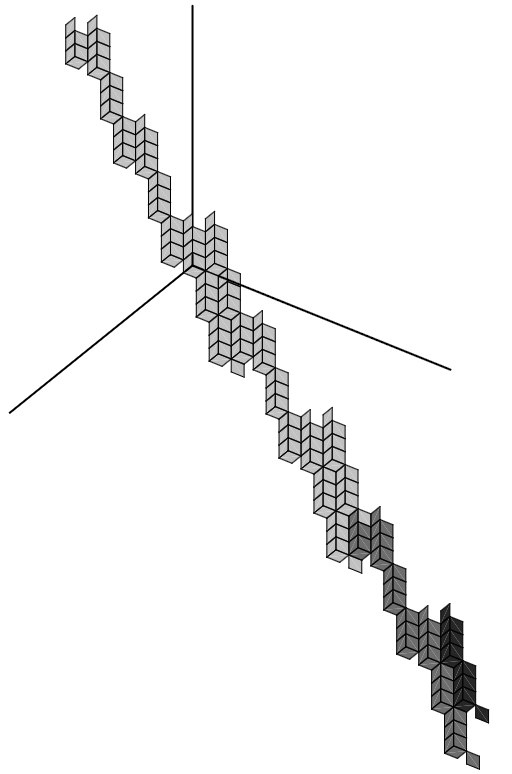}\\
\caption{The $(\sigma^*)^n([\mathbf{0},i]) (i=1,2,3)$ for $\sigma_1$ (left) and $\sigma_2$ (right) with $n=4$, respectively. To get the $\cR_n(i)$ is to apply the projection $\pi$ and uniformly contraction $h^n$.
} \label{Dual_approxi}
\end{figure}
 The set equation \eqref{eq:RauzySetEquation} and its iterations immediately imply that
 \[
 \cR(i) = \bigcup_{[\by,j]\in\sigma^*[\mathbf{0},i]} \mathbf{h}(\cR(j) +\pi\by)=
  \bigcup_{[\by,j]\in(\sigma^*)^n[\mathbf{0},i]} \mathbf{h}^n(\cR(j) +\pi\by) \qquad (n\in\N).
 \]
If we interpret the faces geometrically as in \eqref{eq:face=cube}, we can regard 
\begin{equation}\label{eq:rapprox}
\cR_n(i)=\mathbf{h}^n\pi(\sigma^*)^n[\mathbf{0},i]
\qquad (i\in\cA,\, n\in\N)
\end{equation}
as $n$-th approximation of $\cR(i)$ consisting of finitely many $(d-1)$-dimensional parallelotopes of the form $\mathbf{h}^n[\pi(\bx),i]$ with $[\bx,i]\in H_\sigma$. More precisely, we have $\lim_{n\to\infty} \cR_n(i)=\cR(i)$, $i\in\cA$, in Hausdorff metric; see~\cite[Proposition~3.2]{Ito-Rao:06}. Lemma~\ref{lem:AItiling} implies the following result.
 
 \begin{lemma}\label{lem:nthtiling}
 Let $\sigma$ be a Pisot substitution and $n\in\N$. The following assertions hold.
 \begin{itemize}
 \item[(i)] $\mathcal{E}_n =\{(\sigma^*)^n[\bx,i] \colon [\bx,i]\in H_\sigma \}$ is a partition of $H_\sigma$.
 \item[(ii)] $\mathcal{I}_n =\{\cR_n(i) + \pi\bx  \colon [\bx,i]\in H_\sigma \}$ is a tiling of $\bv^\bot$.
 \end{itemize}
 \end{lemma}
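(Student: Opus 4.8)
The plan is to derive both statements from Lemma~\ref{lem:AItiling} applied to the iterate $\sigma^n$, supplemented for part~(ii) by the fact (recalled above) that the projected faces $[\pi\bx,i]$, $[\bx,i]\in H_\sigma$, tessellate $\bv^\bot$.

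For part~(i), I would first note that $\sigma^n$ is again a Pisot substitution. Its incidence matrix is $M^n$ (abelianization is multiplicative), whose dominant eigenvalue $\beta^n$ is a Pisot unit; since $\beta$ is the only conjugate of $M$ of modulus larger than $1$, no other conjugate $\beta_k$ can satisfy $\beta_k=\zeta\beta$ for a root of unity $\zeta$, whence a short Galois argument gives $\Q(\beta^n)=\Q(\beta)$ and shows that the characteristic polynomial of $M^n$ is the minimal polynomial of $\beta^n$. As $M^n$ has the same dominant left eigenvector $\bv$, we get $H_{\sigma^n}=H_\sigma$, and by definition $(\sigma^*)^n=(\sigma^n)^*$. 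Feeding $\sigma^n$ into Lemma~\ref{lem:AItiling} then yields simultaneously $(\sigma^*)^n(H_\sigma)=H_\sigma$ (so $\mathcal{E}_n$ covers $H_\sigma$) and $(\sigma^*)^n[\bx,i]\cap(\sigma^*)^n[\by,j]=\emptyset$ for distinct faces of $H_\sigma$ (so the pieces of $\mathcal{E}_n$ are disjoint); together these say precisely that $\mathcal{E}_n$ partitions $H_\sigma$. Should one wish to bypass the number-theoretic remark, the same two facts follow by a short induction on $n$ directly from Lemma~\ref{lem:AItiling}, using that $\sigma^*$ sends disjoint subsets of $H_\sigma$ to disjoint subsets.

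For part~(ii), I would start from the commutation relation $\pi M=\mathbf{h}\pi$, valid because $M$ respects the splitting $\R^d=\R\bu\oplus\bv^\bot$; it gives $\pi M^{-n}=\mathbf{h}^{-n}\pi$. Since $\mathbf{h}^n$ is an invertible linear map of $\bv^\bot$ and the projected faces tessellate $\bv^\bot$, their images $\mathcal{F}_n=\{\mathbf{h}^n[\pi\bx,i]\colon[\bx,i]\in H_\sigma\}$ again tessellate $\bv^\bot$. Combining the translation identity $(\sigma^*)^n[\by,j]=M^{-n}\by+(\sigma^*)^n[\mathbf{0},j]$ with $\pi M^{-n}=\mathbf{h}^{-n}\pi$ and the definition~\eqref{eq:rapprox} of $\cR_n$, one checks that for each fixed $[\by,j]\in H_\sigma$ the subfamily $\{\mathbf{h}^n[\pi\bx,i]\colon[\bx,i]\in(\sigma^*)^n[\by,j]\}$ has union precisely $\cR_n(j)+\pi\by$.

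Finally I would assemble the pieces. By part~(i) the index set $H_\sigma$ of $\mathcal{F}_n$ is partitioned by the sets $(\sigma^*)^n[\by,j]$, so each member $\cR_n(j)+\pi\by$ of $\mathcal{I}_n$ is a finite union of members of $\mathcal{F}_n$, distinct members of $\mathcal{I}_n$ draw on disjoint subfamilies of $\mathcal{F}_n$, and every member of $\mathcal{F}_n$ is used exactly once. As $\mathcal{F}_n$ covers $\bv^\bot$ with pairwise measure-disjoint interiors, so does $\mathcal{I}_n$; hence almost every point of $\bv^\bot$ lies in exactly one element of $\mathcal{I}_n$, and since each element is compact with nonempty interior, $\mathcal{I}_n$ is a tiling. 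I expect the main obstacle to be the bookkeeping in part~(ii): verifying the translation identity for $(\sigma^*)^n$ and combining it correctly with $\pi M^{-n}=\mathbf{h}^{-n}\pi$ so that the regrouped small parallelotopes reassemble exactly into $\cR_n(j)+\pi\by$ rather than into a translated or sheared copy. By contrast, part~(i) is routine once $\sigma^n$ is recognized as Pisot.
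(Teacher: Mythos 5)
Your proof is correct and follows exactly the paper's route: the paper's entire proof is the one-line observation that both assertions follow by applying Lemma~\ref{lem:AItiling} with $\sigma$ replaced by $\sigma^n$. You simply make explicit the details the paper leaves implicit --- that $\sigma^n$ is again a Pisot substitution with $H_{\sigma^n}=H_\sigma$, and the bookkeeping (via $\pi M^{-n}=\mathbf{h}^{-n}\pi$ and the translation identity for $(\sigma^*)^n$) showing how the parallelotope tessellation regroups into $\mathcal{I}_n$ --- all of which checks out.
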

 
 \begin{proof}
Both assertions follow immediately by applying Lemma~\ref{lem:AItiling} with $\sigma$ replaced by the Pisot substitution $\sigma^n$.
 \end{proof}
  
 Lemma~\ref{lem:nthtiling}~(i) provides a tiling of a discrete set. However, if we interpret the faces geometrically as in \eqref{eq:face=cube}, we can regard it also as a tiling of the stepped hypersurface represented by $H_\sigma$. The ``tiles'' $(\sigma^*)^n[\mathbf{x},i]$, $[\bx,i]\in H_\sigma$, are then finite unions of cubes of the form \eqref{eq:face=cube}.

 \begin{figure}[h]
\includegraphics[width=5 cm]{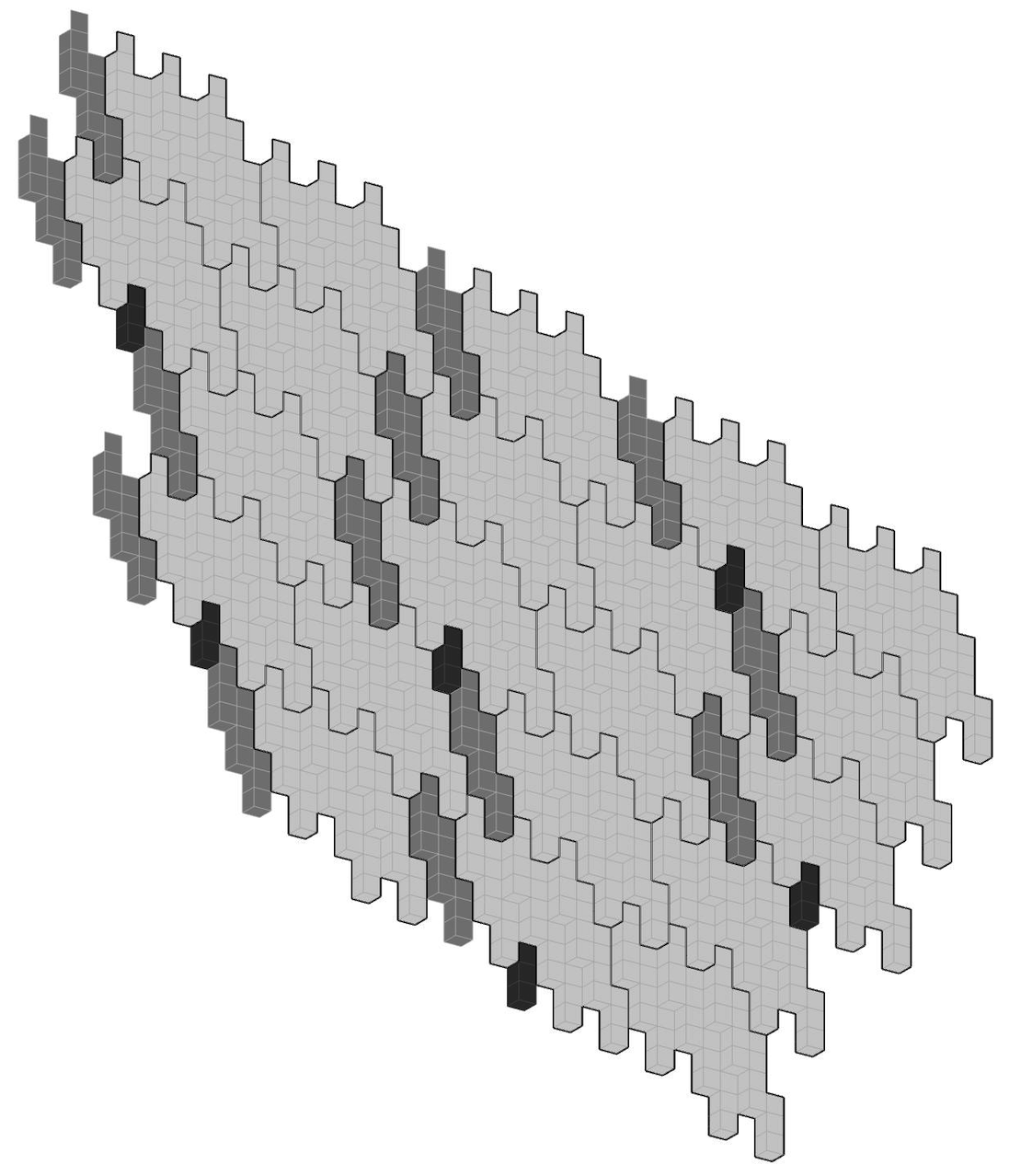} 
\caption{A patch of tiling  shows that $(\sigma^*)^n([x,i])$ tiles the stepped hypersurface with $[x,i] \in H_{\sigma}$ where $n=3$ and  $\sigma=\sigma_1$.} \label{TilingPatch}
\end{figure}

It is known that  
\[
\mathcal{C}_0=\{ \cR(i) + \pi(\bx) \,:\, [\bx,i] \in H_\sigma \}
\]
forms a multi-tiling of $\bv^\bot$, which is called the {\em self-replicating multi-tiling} ({\it cf.~e.g.}~\cite{CANTBST}). As mentioned in the introduction, the question whether $\mathcal{C}_0$ even forms a tiling of $\bv^\bot$ corresponds to the famous {\em Pisot conjecture} (see~\cite{ABBLS:15} and the references given there).


\section{Graphs related to the boundary of a Rauzy fractal}\label{sec:RGRAPHS}

\subsection{Neighbors of Rauzy fractals}\label{sec:RauzyNeighbors}
Let $\sigma$ be a Pisot substitution. We are interested in {\em neighbors} of the subtiles $\cR(i)$, $i\in \cA$, in the self-replicating multi-tiling $\mathcal{C}_0$. In other words, we want to find all faces $[\bx,j] \in H_\sigma$ having $\cR(i) \cap (\cR(j) + \pi(\bx)) \not=\emptyset$ for some $i\in\cA$. To achieve this, in \cite[Section~5]{ST:09} certain graphs are defined. Before we recall their definition we want to convey the idea of their construction. \\
We will use the following notation. Given an element $[i,\bx,j]\in \cA\times \Z^d \times \cA$ we set $$-[i,\bx,j]=[j,-\bx,i]$$ and write $\pm K = K \cup (-K)$ for $K\subset \cA \times \Z^d\times \cA$.\\
By the defining set equation \eqref{eq:RauzySetEquation} for subtiles, for each $[i,\bx,j]\in\cA \times H_\sigma$ we may write
\[
\begin{split}
\cR(i) \cap (\cR(j) + \pi(\bx)) &= \bigcup_{\begin{subarray}{c} \sigma(i_1)=p_1is_1 \\ \sigma(j_1)=q_1jt_1\end{subarray}}
\mathbf{h}
\Big(
\big(\cR(i_1) +  \mathbf{h}^{-1}\pi\mathbf{l}(p_1)\big)\cap \big(
\cR(j_1) + \mathbf{h}^{-1}\pi(\mathbf{l}(q_1)+\bx)
\big)
\Big) 
\\
&= \bigcup_{\begin{subarray}{c} \sigma(i_1)=p_1is_1 \\ \sigma(j_1)=q_1jt_1\end{subarray}}
\mathbf{h}
\Big(
\cR(i_1) \cap \big(
\cR(j_1) + \mathbf{h}^{-1}\pi(\mathbf{l}(q_1) -\mathbf{l}(p_1)+\bx)
\big)
\Big) + \pi\mathbf{l}(p_1).
\end{split}
\]
If we set $\cB[i,\bx,j] = \cR(i) \cap (\cR(j) + \pi(\bx)) \not=\emptyset$ this implies that
\begin{equation}\label{eq:seteqBixj}
\cB[i,\bx,j] = \bigcup_{\begin{subarray}{c} \sigma(i_1)=p_1is_1 \\ \sigma(j_1)=q_1jt_1\end{subarray}}
\mathbf{h} \cB[i_1,M^{-1}(\mathbf{l}(q_1) - \mathbf{l}(p_1) + \bx),j_1] + \pi\mathbf{l}(p_1).
\end{equation}
Thus the nonempty intersections $\cB[i,\bx,j]$ are solutions of a graph directed iterated function system. Because $[M^{-1}\mathbf{l}(p_1),i_1]\in\sigma^*[\mathbf{0},i]$ and $[M^{-1}(\bx+\mathbf{l}(q_1)),j_1]\in\sigma^*[\bx,j]$, we know from Lemma~\ref{lem:AItiling} that $[M^{-1}\mathbf{l}(p_1),i_1],[M^{-1}(\bx+\mathbf{l}(q_1)),j_1]\in H_\sigma$. According to the definition of $H_\sigma$ this implies that $[i_1,M^{-1}(\mathbf{l}(q_1) - \mathbf{l}(p_1) + \bx),j_1]\in \pm(\cA\times H_\sigma)$.

We have two options in the set equation \eqref{eq:seteqBixj}. In the first option, we want to deal exclusively with elements of $\cA\times H_\sigma$. Thus, whenever $[M^{-1}(\mathbf{l}(q_1) - \mathbf{l}(p_1) + \bx),j_1]\not\in\cA\times H_\sigma$, using the identity
\[
\mathbf{h}\cB[i_1,M^{-1}(\mathbf{l}(q_1) - \mathbf{l}(p_1) + \bx),j_1] + \pi\mathbf{l}(p_1)=
\mathbf{h}\cB[j_1,-M^{-1}(\mathbf{l}(q_1) - \mathbf{l}(p_1) + \bx),i_1] + \bx+\pi\mathbf{l}(q_1)
\]
we replace $\mathbf{h}\cB[i_1,M^{-1}(\mathbf{l}(q_1) - \mathbf{l}(p_1) + \bx),j_1] + \pi\mathbf{l}(p_1)$ by $\mathbf{h}\cB[j_1,-M^{-1}(\mathbf{l}(q_1) - \mathbf{l}(p_1) + \bx),i_1] + \bx+\pi\mathbf{l}(q_1)$ on the right hand side of the set equation \eqref{eq:seteqBixj}. Indeed, by this replacement we can even avoid that elements of the form $[i,\mathbf{0},j]$ with $i>j$ occur in \eqref{eq:seteqBixj}, because in this case we can replace $\mathbf{h}\cB[i,\mathbf{0},j]$ by $\mathbf{h}\cB[j,\mathbf{0},i]$. This entails that the triples $[i,\bx,j]$ occurring in this modified set equation belong to the set $\mathfrak{D}$ used in Definition~\ref{def:ambientG} below.

Secondly, we can leave the identity \eqref{eq:seteqBixj} as it is at the cost that the elements $[i,\bx,j]$ involved are contained in the larger set $\pm(\cA\times H_\sigma)$. 

For both of these options, in order to set up the graph directed iterated function system, we need to determine which of the sets $\cB[i,\bx,j]$ are nonempty. This is done with the help of the graphs that we will define in the Sections~\ref{sec:graph1} and~\ref{sec:graph2} below. The first option leads to smaller graphs for the graph directed iterated functions system and is therefore more convenient when dealing with examples. The second option makes the definition of the graphs easier and is better suited for theoretical considerations. For this reason, we will provide the details for both options.

\subsection{The self-replicating boundary graph and the contact graph}\label{sec:graph1}
We start with the first option indicated in Section~\ref{sec:RauzyNeighbors}.
By the considerations of Section~\ref{sec:RauzyNeighbors}, the nonempty intersections $\cB[i,\bx,j]$ are the solutions of a graph-directed iterated function system. The graph that governs this system is the {\it self-replicating boundary graph} whose definition we will now recall. We start with the definition of a large graph that will contain the graphs we are interested in.

\begin{definition}[Ambient graph, {\em cf.} {\cite[Section~5.2]{ST:09}}]
\label{def:ambientG}
Set
\[
\mathfrak{D}  = \big\{
[i,\bx,j]\in\cA\times H_\sigma\;:\;  \bx=\mathbf{0} \text{ implies } i< j 
\big\}.
\]
The {\em ambient graph} $G_{\mathfrak{D}}$ is the directed graph whose nodes are the elements of $\mathfrak{D}$. There is an edge $[i,\bx,j]\xrightarrow{\eta}[i',\bx',j']$ if there exist $((p_1,i,s_1),(q_1,j,t_1))\in\cP\times\cP$ with
\begin{equation*}
\text{(type 1) }\;
\begin{cases}
\sigma(i')= p_1 i s_1 \text{ and } \sigma(j')= q_1 j t_1, \\
M\bx' = \bx + \mathbf{l}(q_1) - \mathbf{l}(p_1),
\end{cases}
\text{or (type 2) }\; 
\begin{cases}
\sigma(j')= p_1 i s_1 \text{ and } \sigma(i')= q_1 j t_1, \\
-M\bx' = \bx + \mathbf{l}(q_1) - \mathbf{l}(p_1),
\end{cases}
\end{equation*}
where the label is given by 
\[
\eta=
\begin{cases}
\mathbf{l}(p_1)\,|\,\mathbf{l}(q_1), &  \langle \mathbf{l}(p_1), \bv \rangle \le  \langle \mathbf{l}(q_1) + \bx, \bv \rangle, \\
\mathbf{l}(q_1)\,|\,\mathbf{l}(p_1), &  \text{otherwise}.
\end{cases}
\]
\end{definition}

Note that, if $\bx=\mathbf{0}$, then $[i,\mathbf{0},j]$ and $[j,\mathbf{0},i]$ correspond to the same intersection and $[i,\mathbf{0},i]$ corresponds to the full subtile $\cR(i)$. Thus we excluded $[i,\mathbf{0},j]$ for $i\geq j$ from $\mathfrak{D}$ to avoid redundancies and trivialities. 

The following definition concerns the analog of the neighbor graph defined for self-affine tiles with standard digit sets (compare with the characterization of this neighbor graph in Lemma~\ref{lem:tileendsinloop}). In accordance with \cite{ST:09} we call it {\em self-replicating boundary graph} (rather than ``neighbor graph'').

\begin{definition}[Self-replicating boundary graph, {\em cf.} {\cite[Section~5.2.1]{ST:09}}]
\label{def:GSsrs}
The {\em self-replicating boundary graph} $G_B$ is the largest subgraph of the ambient graph for which each node belongs to a walk that ends in a loop. The set $B$ of nodes of $G_B$ is the {\em (self-replicating) neighbor set}.
\end{definition}

From the definition of the edges of the ambient graph it follows that an infinite walk can contain a loop only if it starts at an element $[i,\bx,j] \in \mathfrak{D}$ that satisfies
\[
\Vert\pi(\bx)\Vert \le 
 \frac{2 \max\{ \Vert\pi\mathbf{l}(p)\Vert\,:\, (p,a,s)\in \mathcal{P} \}}{1-\Vert \mathbf{h} \Vert}
 \]
 (see also \cite[Section~5.2]{ST:09}). Therefore, $G_B$ is contained in a finite subgraph of the ambient graph and, hence, it can be constructed algorithmically. The graph $G_B$ characterizes the intersections $\cR(i) \cap (\cR(j) + \pi(\bx))$ for $[i,\bx,j]\in \mathfrak{D}$. In particular, $[i,\bx,j]\in \mathfrak{D}$ satisfies $\cR(i) \cap (\cR(j) + \pi(\bx))\not=\emptyset$ if and only if $[i,\bx,j] \in B$. Moreover, $\bz\in \cR(i) \cap (\cR(j) + \pi(\bx))$ if and only if there is an infinite walk in $G_B$ starting from $[i,\bx,j]$ labeled by $(\eta^{(k)})_{k\ge 0}=(\eta_1^{(k)}\,|\, \eta_2^{(k)})_{k\ge 0}$ such that
\[
\bz = \sum_{k\ge 0} \mathbf{h}^k\pi\big(\eta_1^{(k)}\big)
\]
(see \cite[Corollary~5.9]{ST:09}). 
If $\mathcal{C}_0=\{ \cR(i) + \pi(\bx) \,:\, [\bx,i] \in H_\sigma \}
$ forms a tiling, as in the case of self-affine tiles with standard digit sets, this can be used to characterize the boundaries of the Rauzy fractal and its subtiles. This is the reason why the graph is called ``boundary graph''. This also makes the self-replicating boundary graph important for studying topological porperties of Rauzy fractals, see~\cite{ST:09}. 

We turn to the contact graph. We follow \cite[Section~3]{Thuswaldner:06} and \cite[Section~5.4]{ST:09} to define the contact graph of a unit Pisot substitution $\sigma$ over the alphabet $\cA$. Let us regard the faces of $H_\sigma$ as $(d-1)$-dimensional cubes as we did in \eqref{eq:face=cube}. If $[\bx,i],[\by,j]\in H_\sigma$ are different faces, then the intersection is either empty or a cube of dimension at most $(d-2)$. We are interested in all faces that have $(d-2)$-dimensional intersection. In particular, we need the set
\[
\mathfrak{D}_{\rm cont} = \big\{[i,\bx,j]\in\mathfrak{D}\;:\; [\mathbf{0},i]\cap[\bx,j] \text{ is a $(d-2)$-dimensional cube} \big\}.
\]

\begin{example} Figure~\ref{ContactInitial} contains the contact set for the substitutions $\sigma_1$ and $\sigma_2$.
\begin{figure}[h]
\includegraphics[width=3.5 cm]{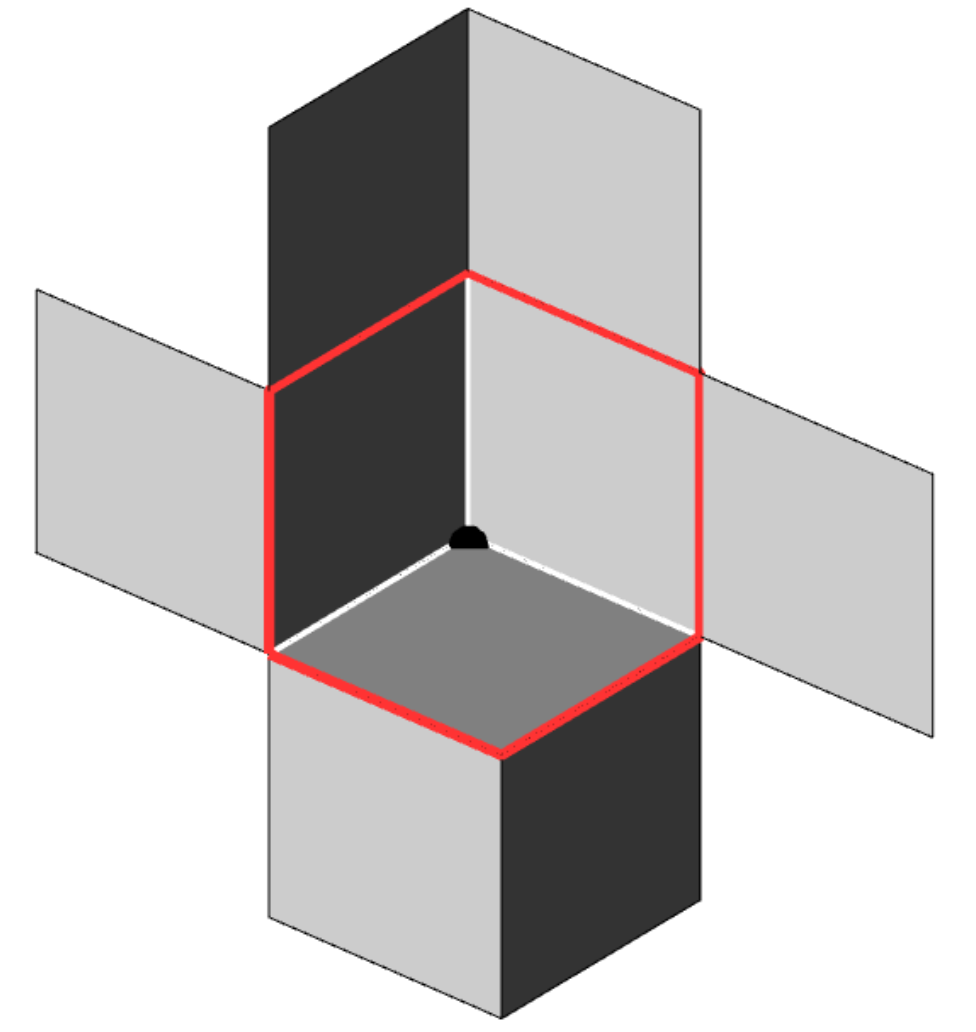} \quad\quad\quad\quad  \includegraphics[width= 3.5 cm]{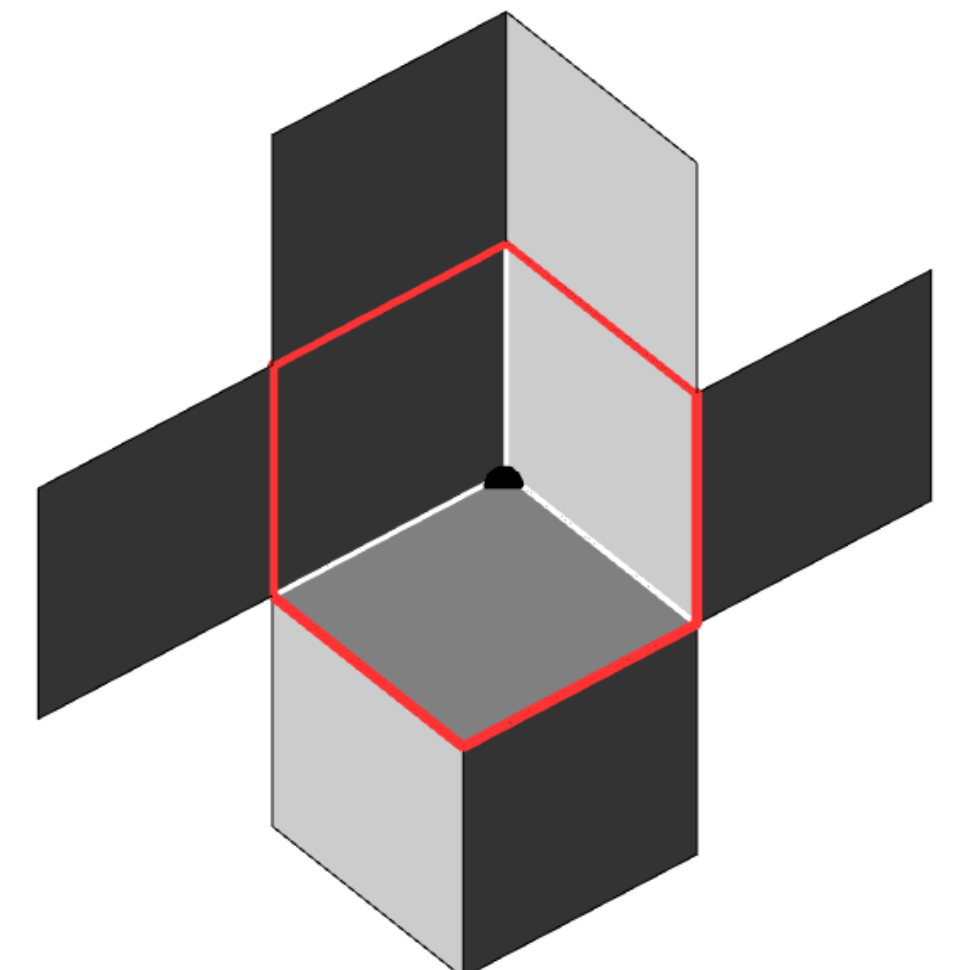}
\caption{ The white lines in the figure are $[\mathbf{0},i]\cap[\mathbf{0},j]$ and red lines are $ [\mathbf{0},i]\cap[\bx,j]$  for the unit Pisot substitution $\sigma_1$ (left) and $\sigma_2$ (right), respectively.} \label{ContactInitial}
\end{figure}
\end{example}

\begin{definition}[Self-replicating contact graph]\label{def:subsContactGraph}
The {\em (self-replicating) pre-contact graph} $G_P$ is the largest subgraph of the ambient graph for which every walk ends in an element of $\mathfrak{D}_{\rm cont}$. Then $G_C= \mathop{Red}(G_P)$ is the {\em (self-replicating) contact graph}.

We denote the set of nodes of $G_C$ by $C$. The set $C$ is called the {\em contact set}.
\end{definition}

This definition of the contact graph is equivalent to the one given in \cite[Section~5.4]{ST:09}\footnote{In \cite{ST:09} the fact that each node is contained in $\cA \times H_\sigma$ is mentioned before Propostion~5.16.} . The contact graph $G_C$ can be algorithmically computed by starting with the set of nodes  $\mathfrak{D}_{\rm cont}$. Then recursively increase this set of nodes “backwards” by adding nodes $[i,\bx,j]\in \mathfrak{D}$ that have edges to the already existing nodes. By \cite[Proposition~5.18]{ST:09}, this construction terminates after finitely many steps with the finite graph $G_P$. From this one easily obtains the contact graph $G_C$.

\subsection{A simpler variant of the graphs}
\label{sec:graph2}
We continue with the second option described in Section~\ref{sec:RauzyNeighbors}. In this case, the definitions of the according graphs are simpler but lead to larger graphs. Again we start with the ambient graph. To our knowledge, these graphs are nowhere defined explicitly, however, already in \cite{ST:09} they are implicitly used in certain proofs; see for instance~\cite[Section~5.2.1]{ST:09}.

\begin{definition}[Simple ambient graph]
\label{def:simpleambientG}
The {\em simple ambient graph} $\hat G_{\mathfrak{D}}$ is the directed graph whose nodes are the elements of $\pm(\cA\times H_\sigma)\setminus\{[i,\mathbf{0},i] \colon i\in\cA\}$.
There is an edge $[i,\bx,j]\xrightarrow{ \mathbf{l}(p_1)| \mathbf{l}(q_1)}[i',\bx',j']$ if there exist $((p_1,i,s_1),(q_1,j,t_1))\in\cP\times\cP$ with
\begin{equation*}
\begin{cases}
\sigma(i')= p_1 i s_1 \text{ and } \sigma(j')= q_1 j t_1, \\
M\bx' = \bx + \mathbf{l}(q_1) - \mathbf{l}(p_1).
\end{cases}
\end{equation*}
\end{definition}


We will often just write $[i,\bx,j]\xrightarrow{ \mathbf{l}(p_1)}[i',\bx',j']$ instead of $[i,\bx,j]\xrightarrow{ \mathbf{l}(p_1)| \mathbf{l}(q_1)}[i',\bx',j']$ because $q_1$ can be reconstructed from the remaining data.

The simple version of the self-replicating boundary graph is defined as follows. 

\begin{definition}[Simple self-replicating boundary graph]
\label{def:simpleGSsrs}
The {\em simple self-replicating boundary graph} $\hat G_B$ is the largest subgraph of the simple ambient graph for which each node belongs to a walk that ends in a loop. 
\end{definition}

Let $[\bz,j]\in H_\sigma$. In this setting we have that $\bz\in \cR(i) \cap (\cR(j) + \pi(\bx))$ if and only if there is an infinite walk in $\hat G_B$ starting from $[i,\bx,j]$ labeled by $(\mathbf{l}(p_k))_{k\ge 0}$ such that
\[
\bz = \sum_{k\ge 0} \mathbf{h}^k\pi\mathbf{l}(p_k).
\]

The simple contact graph is defined as follows. 

\begin{definition}[Simple self-replicating contact graph]\label{def:simplesubsContactGraph}
The {\em simple (self-replicating) pre-contact graph} $\hat G_P$ is the largest subgraph of the simple ambient graph for which every walk ends in an element of $\pm \mathfrak{D}_{\rm cont}$. Then $\hat G_C= \mathop{Red}(\hat G_P)$ is the {\em simple (self-replicating) contact graph}.
\end{definition}

\subsection{Relations between the two variants of graphs}\label{sec:associated}
Let $\sigma$ be a Pisot substitution. The graphs $G_{\mathfrak{D}}$, $G_B$, and $G_C$ contain the same information as the associated simple graphs $\hat G_{\mathfrak{D}}$, $\hat G_B$, and $\hat G_C$, respectively. We make this precise in the following lemma.

\begin{lemma}\label{lem:equivalence}
Let $\sigma$ be a Pisot substitution. Let $G$ be one of the graphs $G_{\mathfrak{D}}$, $G_B$, $G_C$, and let $\hat G$ be the associated simple graph. 
\begin{itemize}
\item If $[i,\bx,j]\in \mathfrak{D}$ is a node of $G$ then $[i,\bx,j]$ and  $-[i,\bx,j]$ are nodes of $\hat G$.
\item If $[i,\bx,j]$ is a node of~$\hat G$ then $[i,\bx,j] \in \mathfrak{D}$ is a node of~$G$ or $-[i,\bx,j] \in \mathfrak{D}$ is a node of~$G$.
\end{itemize}
\end{lemma}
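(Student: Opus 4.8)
The plan is to establish the correspondence between $G$ and $\hat G$ by analyzing how the node set and edge set of each graph relate under the symmetry $[i,\bx,j]\mapsto -[i,\bx,j]$. The key structural observation is that the simple ambient graph $\hat G_{\mathfrak{D}}$ has node set $\pm(\cA\times H_\sigma)\setminus\{[i,\mathbf{0},i]\colon i\in\cA\}$, which is precisely the ``doubled'' version of $\mathfrak{D}$: every element of $\pm(\cA\times H_\sigma)$ with distinct endpoints is either in $\mathfrak{D}$ or its negative is in $\mathfrak{D}$, since $\mathfrak{D}$ was defined by selecting exactly one representative from each pair $\{[i,\bx,j],-[i,\bx,j]\}=\{[i,\bx,j],[j,-\bx,i]\}$ (using the convention that $\bx=\mathbf{0}$ forces $i<j$). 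First I would make this node-level bijection explicit for the ambient graphs, and then verify that the edges correspond: an edge of type 1 in $G_{\mathfrak{D}}$ corresponds directly to an edge of $\hat G_{\mathfrak{D}}$, while an edge of type 2 corresponds to an edge of $\hat G_{\mathfrak{D}}$ between the negated nodes, which is exactly the mechanism encoded by the replacement identity in Section~\ref{sec:RauzyNeighbors}.

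Next I would handle the three graphs $G_{\mathfrak{D}}$, $G_B$, $G_C$ uniformly by noting that each is characterized by a walk-theoretic condition on the ambient graph (``each node lies on a walk ending in a loop'' for $G_B$; ``every walk ends in $\mathfrak{D}_{\rm cont}$'' followed by $\mathop{Red}$ for $G_C$), and the corresponding simple graph is defined by the analogous condition on the simple ambient graph with $\mathfrak{D}_{\rm cont}$ replaced by $\pm\mathfrak{D}_{\rm cont}$. The crucial point is that the node-and-edge correspondence between $G_{\mathfrak{D}}$ and $\hat G_{\mathfrak{D}}$ is compatible with the symmetry $-$: there is an edge $[i,\bx,j]\xrightarrow{\eta}[i',\bx',j']$ in $\hat G_{\mathfrak{D}}$ if and only if there is an edge $-[i',\bx',j']\xrightarrow{\eta'}-[i,\bx,j]$ (with appropriately swapped label), so that $\hat G_{\mathfrak{D}}$ is invariant, as a graph, under simultaneously negating all nodes and reversing edges. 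I would verify this reversal-symmetry from the defining equation $M\bx' = \bx + \mathbf{l}(q_1)-\mathbf{l}(p_1)$, which upon negation becomes the defining equation for the reversed edge between negated nodes.

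Once this symmetry is established, the walk-conditions transfer cleanly: a walk in $\hat G_{\mathfrak{D}}$ ending in a loop either stays among the ``$\mathfrak{D}$-representatives'' or among their negatives (the loop itself, being a closed walk, must have both its $\mathfrak{D}$-image and negated-image realized), so a node of $\hat G_B$ satisfies the boundary-graph condition precisely when its $\mathfrak{D}$-representative satisfies the condition defining $G_B$. I would argue that the projection sending each node of $\hat G$ to its unique $\mathfrak{D}$-representative maps $\hat G_B$ onto $G_B$ and $\hat G_C$ onto $G_C$, establishing both bullet points of the lemma. For the first bullet, if $[i,\bx,j]\in\mathfrak{D}$ is a node of $G$, then both it and $-[i,\bx,j]$ inherit the walk condition in $\hat G$ by the reversal-symmetry. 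For the second bullet, every node of $\hat G$ has exactly one of itself or its negative lying in $\mathfrak{D}$, and that representative is a node of $G$.

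The main obstacle I expect is the careful bookkeeping for the contact graphs $G_C$ and $\hat G_C$, where the $\mathop{Red}$ operation and the ``every walk ends in $\mathfrak{D}_{\rm cont}$'' condition interact with the symmetry: I would need to check that $\mathfrak{D}_{\rm cont}$ is mapped to $\pm\mathfrak{D}_{\rm cont}$ correctly under negation, i.e.\ that $[i,\bx,j]\in\mathfrak{D}_{\rm cont}$ if and only if $-[i,\bx,j]\in\pm\mathfrak{D}_{\rm cont}$, which follows because the intersection $[\mathbf{0},i]\cap[\bx,j]$ being $(d-2)$-dimensional is a geometric condition symmetric under the interchange of the two faces (after translating one face to the origin). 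A secondary subtlety is verifying that the label $\eta$ in Definition~\ref{def:ambientG}, which depends on a comparison $\langle\mathbf{l}(p_1),\bv\rangle \le \langle\mathbf{l}(q_1)+\bx,\bv\rangle$, is consistent with the unordered label $\mathbf{l}(p_1)|\mathbf{l}(q_1)$ of the simple graph; this requires checking that negating a node flips the comparison in exactly the way needed for the labels to match up, but this is a routine verification rather than a genuine difficulty.
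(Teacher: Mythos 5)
Your overall strategy (a node bijection between $\mathfrak{D}$ and $\pm(\cA\times H_\sigma)\setminus\{[i,\mathbf{0},i]\colon i\in\cA\}$, an edge correspondence, and then a transfer of the walk-theoretic definitions of $G_B$ and $G_C$) is the right one, and it is essentially what the paper leaves implicit when it calls the lemma immediate from the definitions. But your key structural claim is wrong: negation is a \emph{direction-preserving} automorphism of $\hat G_{\mathfrak{D}}$, not an edge-reversing one. Indeed, if $[i,\bx,j]\xrightarrow{\mathbf{l}(p_1)|\mathbf{l}(q_1)}[i',\bx',j']$ is an edge (so $\sigma(i')=p_1is_1$, $\sigma(j')=q_1jt_1$, and $M\bx'=\bx+\mathbf{l}(q_1)-\mathbf{l}(p_1)$), then swapping the roles of the two prefix-suffix decompositions yields the edge $[j,-\bx,i]\xrightarrow{\mathbf{l}(q_1)|\mathbf{l}(p_1)}[j',-\bx',i']$, that is $-[i,\bx,j]\to-[i',\bx',j']$, in the \emph{same} direction. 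The reversed edge $-[i',\bx',j']\to-[i,\bx,j]$ that you assert would instead require the letters $i'$ and $j'$ to occur in $\sigma(i)$ and $\sigma(j)$, which fails in general: for $\sigma_1$, the letter $1$ occurs in $\sigma(3)=1$, so edges from nodes with first coordinate $1$ into nodes with first coordinate $3$ exist, but $3$ does not occur in $\sigma(1)=1112$, so no reversed edge can exist. This error is not cosmetic, because the defining conditions of $G_B$ (every node lies on a walk ending in a loop), of $G_P$ (every walk ends in $\mathfrak{D}_{\rm cont}$), and the operation $\mathop{Red}$ (no node without an \emph{outgoing} edge) are not invariant under edge reversal; your later steps, which claim the walk conditions "transfer cleanly," tacitly use the direction-preserving symmetry you did not establish.

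Second, your claim that a walk in $\hat G_{\mathfrak{D}}$ ending in a loop "either stays among the $\mathfrak{D}$-representatives or among their negatives" is false, and it erases exactly the phenomenon that the type 1 / type 2 dichotomy in Definition~\ref{def:ambientG} is designed to encode. Writing $\nu(m)$ for the $\mathfrak{D}$-representative of a node $m$ of $\hat G_{\mathfrak{D}}$, an edge $m\to m'$ of $\hat G_{\mathfrak{D}}$ projects to a type 1 edge $\nu(m)\to\nu(m')$ of $G_{\mathfrak{D}}$ when the signs of $m$ and $m'$ agree, and to a type 2 edge when they disagree; such sign-flipping edges do occur (they are the dashed edges in Figure~\ref{Nei_sigma1}), so walks in $\hat G_{\mathfrak{D}}$ genuinely cross between the two branches. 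The correct bookkeeping is: every edge of $G_{\mathfrak{D}}$ lifts to exactly two edges of $\hat G_{\mathfrak{D}}$ (one starting at each sign of its source, by the direction-preserving symmetry), and every edge of $\hat G_{\mathfrak{D}}$ projects under $\nu$ to an edge of $G_{\mathfrak{D}}$; hence walks lift and project, with the caveat that a loop of $G_{\mathfrak{D}}$ containing an odd number of type 2 edges lifts to a walk ending at the \emph{negative} of its starting node, so it must be traversed twice to produce a loop of $\hat G_{\mathfrak{D}}$. With this (and the observation that $\nu$ maps $\pm\mathfrak{D}_{\rm cont}$ onto $\mathfrak{D}_{\rm cont}$, so the conditions defining $G_C$ and $\hat G_C$ also correspond), both bullets follow; as written, your proof would not go through.
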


\begin{proof}
This follows immediately from the definition of the graphs $G_{\mathfrak{D}}$, $G_B$, $G_C$ and the associated simple graphs.
\end{proof}

Thus $\hat G$ can be easily constructed from $G$ and vice versa, hence, in the sequel, w.l.o.g.\ we may work with the simple versions of the graphs.

\subsection{Connections and the $C$-corona of a graph}
Let $[i_1,\bx,i_2] \in \pm (\cA \times H_\sigma)$. Then we can view $[i_1,\bx,i_2]$ as a {\em connection} between two elements of  $\cA\times \Z^d$ in the following way. Whenever $[\by,i_1], [\by+\bx,i_2] \in \cA\times \Z^d$ we say that $[i_1,\bx,i_2]$ {\em connects} $[\by,i_1]$ and $[\by+\bx,i_2]$ (this relation is symmetric, so $[i_1,\bx,i_2]$ also connects $[\by+\bx,i_2]$ and $[\by,i_1]$).  

Let $m,m_1,m_2 \in \pm  (\cA \times H_\sigma)$ be given. 
If there exist $[\bx_1,i_1],[\bx_2,i_2],[\bx_3,i_3]\in H_\sigma$ such that
$m_1$ connects $[\bx_1,i_1]$ and $[\bx_2,i_2]$, $m_2$ connects $[\bx_2,i_2]$ and $[\bx_3,i_3]$, and $m$ connects $[\bx_1,i_1]$ and $[\bx_3,i_3]$ we say that the connection $m$ can be {\em decomposed} into the connections $m_1$ and $m_2$. 

We are particularly interested in chains of connections formed by elements of $\pm C$. Indeed, let $[\bx_1,i_1],[\bx_2,i_2] \in H_\sigma$. If there is $r\in \N$ and $[\by_k,j_{k}]\in H_\sigma$, $k\in\{0,\ldots, r\}$, with 
$[\by_0,j_0] = [\bx_1,i_1]$, $[\by_r,j_r] = [\bx_2,i_2]$,  and 
$[j_{k},\by_{k+1}-\by_k,j_{k+1}] \in \pm C \cup\{[i,\mathbf{0},i]\colon i\in\cA\}$ for $k\in\{0,\ldots, r-1\}$, we say that the two elements $[\bx_1,i_1],[\bx_2,i_2] \in H_\sigma$ can be connected by $r$ subsequent {\em $\pm C$-connections} and write $[\bx_1,i_1] \overset{r}{\sim} [\bx_2,i_2]$. If  $[\bx_1,i_1],[\bx_2,i_2] \in -H_\sigma$, we write $[\bx_1,i_1] \overset{r}{\sim} [\bx_2,i_2]$ if $[-\bx_1,i_1] \overset{r}{\sim} [-\bx_2,i_2]$


We need the following definition of {\em $C$-corona}  for subgraphs of the simple ambient graph.

\begin{definition}[$C$-corona]\label{def:corona}
Let $\hat G$ be a subgraph of the simple ambient graph.
The {\em $C$-corona} $C$-Corona$(\hat G)$ is the subgraph of the ambient graph whose nodes are given by 
\[
\big\{[i_1,\bx, i_2] \in \pm(\cA\times H_\sigma) \setminus\{[i,\mathbf{0},i] \colon i\in\cA\} \;:\;  \text{there is } [i_1,\by,j]\in \hat G \text{ with } 
[\by,j] \overset{1}{\sim} [\bx, i_2] 
 \big\}.
\] 
\end{definition}

In other words, each node of $C$-Corona$(\hat G)$ can be reached from a node of $\hat G$ by a $\pm C$-connection. Because $C$ is a finite set, it is easy to construct $C$-Corona$(\hat G)$ from $\hat G$. 

\section{A neighbor finding algorithm for Rauzy fractals}\label{sec:RALGOR}

\subsection{Formulation of the new neighbor finding algorithm}
Our goal is to show that  by forming $C$-coronas and reducing iteratively, we can obtain $G_B$ from $G_C$ by Algorithm~\ref{algRauzyneighbor}.
\begin{algorithm}[H]\
  \caption{(Construction of the self-replicating neighbor graph of a Pisot substitution $\sigma$.)}
   \label{algRauzyneighbor}
  \begin{algorithmic}
    \REQUIRE Contact graph $G_C$
    \ENSURE Neighbor graph $G_B$
    \STATE $p\assign 1$ 
    \STATE $A[1]\assign \hat G_C$
    \REPEAT 
      \STATE $p\assign p+1$
      \STATE $A[p] \assign\mathop{Red}$( $C$-Corona$(A[p-1]) )$
    \UNTIL{$A[p] = A[p-1]$}
    
    $\hat G_B \assign A[p]$
  \end{algorithmic}
 \end{algorithm}
Recall that we saw in Section~\ref{sec:associated}, $G_C$ and $G_B$ are equivalent to $\hat G_C$ and $\hat G_B$, respectively.

\subsection{A proof for the new algorithm}
In this section we prove that Algorithm~\ref{algRauzyneighbor} terminates and calculates the self-replicating boundary graph of a given Pisot substitution. Although the proof is inspired by the according proof in the setting of self-affine tiles with standard digit set, it is more involved.

In the setting of self-affine tiles with standard digit set  we used that the contact set $R$ contains a basis of the lattice $\Z^d$. The analog of this observation is contained in the following auxiliary result. It shows that we can jump from one element of $H_\sigma$ to another by finitely many subsequent $\pm C$-connections.

\begin{lemma}\label{lem:Rsteps}
Let $[\bx_1,i_1],[\bx_2,i_2] \in  H_\sigma$. Then there is $r\in \N$ with  $[\bx_1,i_1] \overset{r}{\sim} [\bx_2,i_2]$.
\end{lemma}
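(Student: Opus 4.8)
The plan is to show that any two faces of the stepped hypersurface $H_\sigma$ can be joined by a chain of $\pm C$-connections, mimicking the role played in the self-affine setting by the fact that the contact set $R$ contains a lattice basis. The key geometric fact I would exploit is Lemma~\ref{lem:nthtiling}: the parallelotopes $[\pi(\bx),i]$ with $[\bx,i]\in H_\sigma$ tesselate $\bv^\bot$, so the stepped hypersurface projects onto a genuine $(d-1)$-dimensional tiling. I would first argue that the contact set $C$ encodes \emph{adjacency} in this tiling in the sense that two faces $[\mathbf{0},i]$ and $[\bx,j]$ share a $(d-2)$-dimensional cube precisely when $[i,\bx,j]\in \mathfrak{D}_{\rm cont}$, and that $\mathfrak{D}_{\rm cont}\subset \pm C\cup\{[i,\mathbf{0},i]\}$ after passing to the reduced graph $G_C = \mathop{Red}(G_P)$. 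Thus a single $\pm C$-connection lets one step from a face to any of its geometric neighbors across a facet.

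The heart of the argument is then a connectivity statement: the adjacency graph on the faces of $H_\sigma$ (where faces are linked if they share a $(d-2)$-dimensional cube) is connected. First I would reduce to showing that any face $[\bx_2,i_2]$ can be reached from a fixed base face $[\bx_1,i_1]$ by such facet-crossing steps. Because the projected faces tile $\bv^\bot$ and tilings by parallelotopes are facet-to-facet connected (the nerve of the tiling is path-connected), one can produce a finite path $[\by_0,j_0],\ldots,[\by_r,j_r]$ in $H_\sigma$ with $[\by_0,j_0]=[\bx_1,i_1]$, $[\by_r,j_r]=[\bx_2,i_2]$, and consecutive faces sharing a $(d-2)$-dimensional cube. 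Each consecutive pair then gives a connection $[j_k,\by_{k+1}-\by_k,j_{k+1}]$ which, after normalizing into $\mathfrak{D}$ by possibly negating, lies in $\pm C\cup\{[i,\mathbf{0},i]\colon i\in\cA\}$. This is exactly the relation $[\bx_1,i_1]\overset{r}{\sim}[\bx_2,i_2]$ from the definition of $\pm C$-connections.

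I expect the main obstacle to be the justification that $\mathfrak{D}_{\rm cont}$ (the set of geometric facet-adjacencies) actually survives into the reduced contact graph $G_C$, i.e.\ that the relevant connections belong to $\pm C$ rather than being discarded by the $\mathop{Red}$ operation. This requires checking that every element of $\mathfrak{D}_{\rm cont}$ lies on an infinite walk of the pre-contact graph $G_P$ ending in $\mathfrak{D}_{\rm cont}$, which should follow from the graph-directed self-similar structure of $H_\sigma$ under $\sigma^*$ established in Lemma~\ref{lem:AItiling}: applying $\sigma^*$ preserves facet-adjacency, providing outgoing edges at each step and hence infinite walks. A secondary, more technical point is verifying facet-to-facet connectivity for the stepped-surface tiling when the tiling is only facet-to-facet up to the discrete translation structure; I would handle this by working on the finite patches $\mathcal{E}_n$ and $\mathcal{I}_n$ from Lemma~\ref{lem:nthtiling} and taking $n$ large enough that both $[\bx_1,i_1]$ and $[\bx_2,i_2]$, together with a connecting path, lie within one patch. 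Because $C$ is finite, the number $r$ of steps is automatically finite, which yields the claim.
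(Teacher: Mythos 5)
Your overall skeleton (chain facet-adjacencies, then prove the adjacency graph of a tiling of $\bv^\bot$ is connected) is the same as the paper's, but the bridging step you rely on contains a genuine gap. You need that every geometric facet-adjacency of faces of $H_\sigma$ is a $\pm C$-connection, i.e.\ that $\mathfrak{D}_{\rm cont}\subset \pm C$ survives the $\mathop{Red}$ operation, and you propose to get this from the claim that ``applying $\sigma^*$ preserves facet-adjacency,'' citing Lemma~\ref{lem:AItiling}. That lemma only says that $\sigma^*$ maps the partition structure of $H_\sigma$ to itself; it says nothing about the geometry of the patches $\sigma^*[\bx,i]$, and in fact the dual substitution does \emph{not} in general preserve facet-adjacency: the patches of two faces sharing a $(d-2)$-dimensional cube can be separated by a third patch, and iterating $\sigma^*$ the corresponding regions can drift apart entirely. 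One way to see that your claim cannot hold in general: every node of $G_C$ carries an infinite walk in the finite graph $G_P$, hence lies (up to sign) in the boundary graph $G_B$, so $[i,\bx,j]\in\pm C$ forces $\cR(i)\cap(\cR(j)+\pi(\bx))\neq\emptyset$. But two faces that share a $(d-2)$-dimensional cube need not have intersecting Rauzy subtiles --- the fractal tiles deform away from the polyhedral faces. This is exactly the phenomenon behind the self-affine analogue, where the standard basis vectors $R_0$ need not be neighbors of $\cT$, which is why \cite[Lemma~4.4]{ST:03} only asserts that $R$ contains \emph{some} basis of $\Z^d$, not that $R_0\subset R$. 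So level-$0$ adjacency simply is not the right notion to feed into the $\mathop{Red}$-stable contact set.

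The paper's proof circumvents precisely this difficulty by working at a deep level $n$ rather than at level $0$: it invokes \cite[Lemma~4.2]{Thuswaldner:06}, which supplies an $n$ such that whenever two tiles $\cR_n(i)$ and $\cR_n(j)+\pi(\bx)$ of the tiling $\mathcal{I}_n$ (the $n$-th approximations from \eqref{eq:rapprox}, which are finite unions of parallelotopes) share a $(d-2)$-dimensional parallelotope, then $[i,\bx,j]\in C$. Adjacency at this fixed large level, unlike adjacency of the original faces, is guaranteed to give $\pm C$-connections. The connectivity statement is then proved for $\mathcal{I}_n$, and since its tiles are non-convex, the ``nerve of a parallelotope tiling is path-connected'' argument you sketch is not available; the paper instead argues by contradiction, showing that if the set $\mathcal{J}$ of faces reachable from $[\bx_1,i_1]$ by such adjacency steps were proper, then the union of tiles over $\mathcal{J}$ and the union over its complement would intersect in a set of dimension at most $d-3$, contradicting Mazurkiewicz's theorem that no $(d-3)$-dimensional set separates $\R^{d-1}$. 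If you want to repair your argument, you should replace your level-$0$ adjacency by level-$n$ adjacency with $n$ as in \cite[Lemma~4.2]{Thuswaldner:06} and then run a separation (or other non-convex connectivity) argument on $\mathcal{I}_n$; as written, the step from $\mathfrak{D}_{\rm cont}$ to $\pm C$ is unjustified and in general false.
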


\begin{proof}
The proof draws on results from \cite{Thuswaldner:06}.\footnote{In \cite{Thuswaldner:06}, suffixes are used instead of prefixes, however, according to \cite[Section~4.3]{Thuswaldner:06} this is immaterial.} 
For $n\in\N$, let $\cR_n(i)$, $i\in \cA$, be given as in \eqref{eq:rapprox}. Translations of these sets form the tiling $\mathcal{I}_n$ defined in Lemma~\ref{lem:nthtiling}. 
%
By definition, the elements of $\mathcal{I}_n$ are finite unions of $(d-1)$-dimensional parallelotopes. Two distinct elements of $\mathcal{I}_n$ intersect in a (possibly empty) finite union of  parallelotopes whose (topological) dimension is less than or equal to $d-2$.

By \cite[Lemma~4.2]{Thuswaldner:06} there is $n\in\N$ such that following holds. If $\cR_n(i),\cR_n(j)+\pi(\bx)\in \mathcal{I}_n$ share a $(d-2)$-dimensional parallelotope then $[i,\bx,j] \in C$. 
From the definition of $H_\sigma$ we see that $\cR_n(i) + \pi(\bx), \cR_n(j)+\pi(\by) \in \mathcal{I}_n$ implies that either $\cR_n(i), \cR_n(j)+\pi(\by-\bx) \in \mathcal{I}_n$ or $\cR_n(i) + \pi(\bx-\by), \cR_n(j) \in \mathcal{I}_n$. Thus, if two distinct elements $\cR_n(i) + \pi(\bx), \cR_n(j)+\pi(\by) \in \mathcal{I}_n$ have $(d-2)$-dimensional intersection, then $[i,\by-\bx,j] \in \pm C$.

Now choose $[\bx_1,i_1],[\bx_2,i_2]\in H_\sigma$ arbitrary. Set $\mathcal{J}_0=\{[\bx_1,i_1]\}$ and for $k\ge 1$ inductively define
\[
\mathcal{J}_k= \big\{[\by,j] \in H_\sigma \colon (\cR_n(j)+\pi(\by)) \cap X 
\text{ is at least $(d-2)$-dimensional for some $X\in \mathcal{J}_{k-1}$}
\big\}.
\]
Note that $\mathcal{J}_k$ is contained in the subset of all elements of $H_\sigma$ that can be connected to $[\bx_1,i_1]$ by at most $k$ $\pm C$-connections. Thus, setting $\mathcal{J}=\bigcup_{k\ge 0}\mathcal{J}_k$,  it remains to show that $[\bx_2,i_2] \in \mathcal{J}$. Assume on the contrary that this is not the case. Then the sets
\[
A=\bigcup_{[\bx,i]\in \mathcal{J}} (\cR_n(i) + \pi(\bx)) \quad\text{and}\quad
B=\bigcup_{[\bx,i]\in H_\sigma \setminus \mathcal{J}} (\cR_n(i) + \pi(\bx))
\]
are both $(d-1)$-dimensional,  $A \cup B = \bv^\bot\simeq \R^{d-1}$, and the dimension of $A\cap B$ is at most $d-3$. Thus $A\cap B$ is a $(d-3)$-dimensional cut of a $(d-1)$-dimensional space. This contradicts a classical theorem of Mazurkiewicz; see~\cite[\S59, II, Theorem~1]{Kuratowski:68}.
\end{proof}

Let $[i,\bm,j] \in \pm B$ be an element of the neighbor set. We say that $[i,\bm,j]$ has {\em contact degree $q$} if $q$ is minimal with $[\mathbf{0},i]\overset{q}{\sim}[\bm,j]$. Because the nonnegative integer $q$ is a well-defined positive integer by Lemma~\ref{lem:Rsteps}, 
we may set $\mathrm{cdeg}([i,\bm,j]):=q$. Since $B$ is a finite set, we gain the following result.

\begin{lemma}\label{lem:Bfinitecdeg}
There is $p\in \mathbb{N}$ such that each $[i,\bm,j] \in B$ satisfies $\mathrm{cdeg}([i,\bm,j]) \le p$.  
\end{lemma}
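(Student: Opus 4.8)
The plan is to show that the contact degrees of elements of $B$ are bounded by exploiting the finiteness of $B$ together with the fact that $\mathrm{cdeg}$ is well defined on each element. By Lemma~\ref{lem:Rsteps}, for every $[i,\bm,j]\in \pm B$ the two faces $[\mathbf{0},i]$ and $[\bm,j]$ of $H_\sigma$ can be joined by finitely many subsequent $\pm C$-connections, so there exists at least one $r\in\N$ with $[\mathbf{0},i]\overset{r}{\sim}[\bm,j]$. Taking the minimal such $r$ makes $\mathrm{cdeg}([i,\bm,j])$ a well-defined positive integer for each fixed $[i,\bm,j]$. The remaining task is to turn these finitely many finite values into a single uniform bound.

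First I would observe that by Definition~\ref{def:GSsrs} the node set $B$ is finite (this follows from the a priori bound $\Vert\pi(\bx)\Vert \le \tfrac{2\max\{\Vert\pi\mathbf{l}(p)\Vert : (p,a,s)\in\mathcal{P}\}}{1-\Vert\mathbf{h}\Vert}$ given in Section~\ref{sec:graph1}, which confines $B$ to a finite subgraph of the ambient graph). Each of the finitely many elements $[i,\bm,j]\in B$ carries a finite value $\mathrm{cdeg}([i,\bm,j])\in\N$. Then I would simply set
\[
p = \max\big\{\mathrm{cdeg}([i,\bm,j]) \;:\; [i,\bm,j]\in B\big\}.
\]
Since this is a maximum of finitely many natural numbers, $p\in\N$, and by construction every $[i,\bm,j]\in B$ satisfies $\mathrm{cdeg}([i,\bm,j])\le p$, which is exactly the assertion.

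There is essentially no hard step here; the content is entirely in the preceding Lemma~\ref{lem:Rsteps}, which guarantees that $\mathrm{cdeg}$ is finite on each element, and in the finiteness of $B$, which guarantees that the maximum over all elements exists. The only point that warrants a sentence of care is confirming that $B$ is genuinely finite rather than merely contained in a finite graph up to reduction; this is immediate from Definition~\ref{def:GSsrs} and the norm bound recalled after it. Thus the proof reduces to invoking finiteness of $B$ and Lemma~\ref{lem:Rsteps} and taking a maximum.
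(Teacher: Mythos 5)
Your proof is correct and matches the paper's own argument: the paper likewise notes that $\mathrm{cdeg}$ is well defined on each element of $B$ by Lemma~\ref{lem:Rsteps} and then deduces the uniform bound immediately from the finiteness of $B$ (which, as you observe, follows from the norm bound after Definition~\ref{def:GSsrs}). Taking $p$ to be the maximum over the finitely many finite values is exactly what the paper's remark ``Since $B$ is a finite set, we gain the following result'' amounts to.
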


Let $w$ be a walk in $\hat G_B$. Then $\mathrm{cdeg}(w)$ denotes the maximum of the degree of the nodes of $w$. We have $\mathrm{cdeg}(w) \le p$. 

We need the following lemma, which shows how the contact degree behaves under the dual substitution. It is an analog of Lemma~\ref{lem:mono} from the setting of self-affine tiles.

\begin{lemma}\label{lem:contactdegree}
Let $[\bx_1,i_1], [\bx_2,i_2],[\by_1,j_1], [\by_2,j_2] \in H_\sigma$ be given in a way that $[\by_1,j_1] \in \sigma^*[\bx_1,i_1]$ and $[\by_2,j_2] \in \sigma^*[\bx_2,i_2]$. Then
$[\by_1,j_1] \overset{q}{\sim} [\by_2,j_2]$ implies $[\bx_1,i_1] \overset{q}{\sim} [\bx_2,i_2]$.
\end{lemma}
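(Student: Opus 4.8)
The plan is to prove the contrapositive-flavored statement directly by exhibiting an explicit chain of $\pm C$-connections from $[\mathbf{0},i_1]$-relative position $[\bx_1,i_1]$ to $[\bx_2,i_2]$ of the same length $q$, given such a chain at the level of the images under $\sigma^*$. Suppose $[\by_1,j_1] \overset{q}{\sim} [\by_2,j_2]$, so by definition there is a chain of faces $[\bw_0,k_0]=[\by_1,j_1], [\bw_1,k_1], \ldots, [\bw_q,k_q]=[\by_2,j_2]$ in $H_\sigma$ with each $[k_\ell, \bw_{\ell+1}-\bw_\ell, k_{\ell+1}] \in \pm C \cup \{[i,\mathbf{0},i]\colon i\in\cA\}$. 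The goal is to ``pull back'' this chain through the dual substitution $\sigma^*$ to a chain of the same length $q$ connecting $[\bx_1,i_1]$ to $[\bx_2,i_2]$.

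First I would use Lemma~\ref{lem:nthtiling}(i): since $\sigma^*$ maps the partition of $H_\sigma$ to itself and is injective on distinct faces (Lemma~\ref{lem:AItiling}(ii)), each face $[\bw_\ell,k_\ell]$ in the chain has a unique preimage face $[\bz_\ell,h_\ell]\in H_\sigma$ with $[\bw_\ell,k_\ell]\in\sigma^*[\bz_\ell,h_\ell]$, and by hypothesis the endpoints pull back to $[\bx_1,i_1]$ and $[\bx_2,i_2]$ respectively. The substantive step is then to show that consecutive preimages are still $\pm C$-connected, i.e.\ that $[h_\ell,\bz_{\ell+1}-\bz_\ell,h_{\ell+1}]\in \pm C \cup \{[i,\mathbf{0},i]\}$ whenever $[k_\ell,\bw_{\ell+1}-\bw_\ell,k_{\ell+1}]$ is. I would reduce to a single $\pm C$-connection and appeal to the characterization of $C$ via the contact graph $G_C=\mathop{Red}(G_P)$: a single $\pm C$-connection corresponds to a node of $\hat G_C$, and since the contact graph is built so that every node lies on a walk terminating in $\mathfrak{D}_{\mathrm{cont}}$, the edges of the (simple) ambient graph realize exactly the pullback of a contact edge. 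Concretely, an edge $[i,\bx,j]\xrightarrow{\mathbf{l}(p_1)|\mathbf{l}(q_1)}[i',\bx',j']$ in the simple ambient graph encodes precisely that $[\by,j']\in\sigma^*[\cdot,\cdot]$-type relation holds, so the existence of a $\pm C$-connection between images forces a contact-graph edge landing on it, whose source is the desired preimage connection.

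The key identity to exploit is the one already used in Section~\ref{sec:RauzyNeighbors}: if $[\bw,k]\in\sigma^*[\bz,h]$ with $\bw = M^{-1}(\bz+\mathbf{l}(p))$ for some prefix-suffix datum, then a difference $\bw_{\ell+1}-\bw_\ell$ at the image level pulls back under $M$ to $\bz_{\ell+1}-\bz_\ell$ up to the abelianization terms $\mathbf{l}(q)-\mathbf{l}(p)$, which is exactly the shape $M\bx'=\bx+\mathbf{l}(q_1)-\mathbf{l}(p_1)$ appearing in Definition~\ref{def:simpleambientG}. Thus a contact edge in $\hat G_C$ goes \emph{from} the preimage connection \emph{to} the image connection, so the chain of image $\pm C$-connections of length $q$ is the image (under the edge map) of a chain of preimage $\pm C$-connections of the same length $q$. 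Because the length is preserved connection-by-connection, $[\bx_1,i_1]\overset{q}{\sim}[\bx_2,i_2]$ follows. Here one must also handle the trivial connections $[i,\mathbf{0},i]$, which pull back to trivial connections by the same unique-preimage argument, so the chain length is genuinely unchanged rather than merely bounded.

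The main obstacle I anticipate is the bookkeeping around the $\pm$ (the type-1/type-2 dichotomy and the folding $-[i,\bx,j]=[j,-\bx,i]$) together with the requirement that all intermediate preimage faces actually lie in $H_\sigma$ rather than in $-H_\sigma$. One must check that the preimage chain can be chosen to stay within $H_\sigma$ (or be consistently reflected) so that Definition of $\overset{q}{\sim}$ on $H_\sigma$ applies verbatim; this is where Lemma~\ref{lem:AItiling}(i), the invariance $\sigma^*(H_\sigma)=H_\sigma$, is essential, guaranteeing that every face in the image chain has its unique preimage inside $H_\sigma$. A secondary subtlety is ensuring that the contact edge genuinely realizes the pullback of a \emph{contact} ($\pm C$) connection and not merely an ambient one --- this requires invoking the defining property of $G_C$ (every walk ending in $\mathfrak{D}_{\mathrm{cont}}$, reduced) from Definition~\ref{def:subsContactGraph}, rather than just the geometric picture. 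Once these two points are secured, the preservation of the connection count $q$ is immediate and the lemma follows.
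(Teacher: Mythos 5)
Your proposal is correct and follows essentially the same route as the paper: the core mechanism in both is that the simple ambient graph has an edge going \emph{from} the preimage (parent) connection \emph{to} the image (child) connection, so that the backward-closure defining $\hat G_P$ together with the reduction $\hat G_C=\mathop{Red}(\hat G_P)$ forces the preimage connection into $\pm C$. The only organizational difference is that the paper reduces to the case $q=1$ and then inducts, whereas you pull back the whole length-$q$ chain at once via the partition property of Lemma~\ref{lem:nthtiling}; these amount to the same argument.
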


\begin{proof}
It suffices to prove the lemma for $q=1$. The general case immediately follows from this by induction.
Let $q=1$ and $[\bx_1,i_1], [\bx_2,i_2],[\by_1,j_1], [\by_2,j_2]$ be as in the statement of the lemma.
 
By definition, the relation $[\by_1,j_1] \overset{1}{\sim} [\by_2,j_2]$ implies that $[j_1,\by_2-\by_1,j_2] \in \pm C\cup\{[i,\mathbf{0},i]; i\in \cA\}$. 
Because $[\by_1,j_1] \in \sigma^*[\bx_1,i_1]$ and $[\by_2,j_2] \in \sigma^*[\bx_2,i_2]$ There exist $p_1,p_2,s_1,s_2\in\cA^*$ such that
\begin{equation}\label{eq:twopredecessors}
\begin{split}
[\by_1,j_1] = \big[ M^{-1}(\bx_1+\mathbf{l}(p_1)),j_1 \big] \quad\text{with }\sigma(j_1)=p_1i_1s_1, \\
[\by_2,j_2] = \big[ M^{-1}(\bx_2+\mathbf{l}(p_2)),j_2 \big] \quad\text{with }\sigma(j_2)=p_2i_2s_2.
\end{split}
\end{equation}
By the definition the simple ambient graph, \eqref{eq:twopredecessors} implies that
\[
[i_1,\bx_2-\bx_1,i_2] \xrightarrow{\mathbf{l}(p_1)} [j_1,\by_2-\by_1,j_2]
\]
is an edge in the simple ambient graph. Thus, because $[j_1,\by_2-\by_1,j_2]\in \pm C$, by the definition of the simple contact graph $[i_1,\bx_2-\bx_1,i_2]\in \pm C$ and, hence, $[\bx_1,i_1]\overset{1}{\sim}[\bx_2,
i_2]$. 
\end{proof}

The following result is a version of Lemma~\ref{lem:2} in the setting of Pisot substitutions. For two finite words $w_1,w_2\in\cA^*$, we write $w_2\prec w_1$ if $w_2$ is a prefix of $w_1$ ($w_1=w_2w_3$ for some $w_3\in\cA^*$).

\begin{lemma}\label{lem:type1characterization}
The following assertions are equivalent.
\begin{itemize}
\item In $\hat G_B$ there exists a walk 
\begin{equation*}
w:\; [i_0,\bm_0,j_0] \xrightarrow{\mathbf{l}(p_0)|\mathbf{l}(q_0)} [i_1,\bm_1,j_1] \xrightarrow{\mathbf{l}(p_1)|\mathbf{l}(q_1)} \cdots \xrightarrow{\mathbf{l}(p_{k-1})|\mathbf{l}(q_{k-1})} [i_k,\bm_k,j_k] 
\end{equation*}
with $[\bm_0,j_0]\in H_\sigma$ and $\mathrm{cdeg}(w)\le q$.
\item There exist sequences $([\ba_\ell, i_\ell])_{0\le\ell\le k}$ and $([\bb_\ell, j_\ell])_{0\le\ell\le k}$ in $H_\sigma$, where $[\ba_0,i_0]=[\mathbf{0},i_0]$, $[\bb_0,j_0]=[\bm_0,j_0]$, and
\begin{equation}\label{eq:abfaces}
\begin{split}
[\ba_{\ell}, i_{\ell}]&=\big[M^{-1}(\ba_{\ell-1}+\mathbf{l}(p_{\ell-1})), i_{\ell} \big] \in (\sigma^*)^\ell[\mathbf{0},i_0]
\quad\text{and}\\
[\bb_\ell, j_\ell]&=\big[M^{-1}(\bb_{\ell-1}+\mathbf{l}(q_{\ell-1})), j_{\ell} \big] \in (\sigma^*)^\ell[\bm_0,j_0]
\end{split}
\end{equation}
with $\bm_\ell=\bb_\ell-\ba_\ell$, $p_{\ell-1}i_{\ell-1}\prec \sigma(i_\ell)$ and $q_{\ell-1}j_{\ell-1}\prec \sigma(j_\ell)$, and such that $[\ba_\ell, i_\ell]\overset{q}{\sim}[\bb_\ell, j_\ell]$ ($\ell\in\{1,\ldots, k\}$). 
\end{itemize}
\end{lemma}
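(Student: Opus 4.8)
The plan is to prove the equivalence by establishing each implication separately, working inductively on the length $k$ of the walk. The key insight is that this lemma unpacks what it means for a walk in $\hat G_B$ to have bounded contact degree: it decomposes the connection $[i_\ell,\bm_\ell,j_\ell]$ (which records $\cR(i_\ell)\cap(\cR(j_\ell)+\pi(\bm_\ell))$) into a pair of faces $[\ba_\ell,i_\ell],[\bb_\ell,j_\ell]\in H_\sigma$ sitting at a bounded number $q$ of $\pm C$-connections apart. First I would show that the definition of an edge in the simple ambient graph is exactly the relation \eqref{eq:abfaces}: an edge $[i_{\ell-1},\bm_{\ell-1},j_{\ell-1}]\xrightarrow{\mathbf{l}(p_{\ell-1})|\mathbf{l}(q_{\ell-1})}[i_\ell,\bm_\ell,j_\ell]$ requires $\sigma(i_\ell)=p_{\ell-1}i_{\ell-1}s_{\ell-1}$ and $\sigma(j_\ell)=q_{\ell-1}j_{\ell-1}t_{\ell-1}$ together with $M\bm_\ell=\bm_{\ell-1}+\mathbf{l}(q_{\ell-1})-\mathbf{l}(p_{\ell-1})$, and this is precisely what lets us define $[\ba_\ell,i_\ell]\in\sigma^*[\ba_{\ell-1},i_{\ell-1}]$ and $[\bb_\ell,j_\ell]\in\sigma^*[\bb_{\ell-1},j_{\ell-1}]$ with $\bm_\ell=\bb_\ell-\ba_\ell$.

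For the forward implication (walk $\Rightarrow$ faces), I would construct the face sequences recursively along the walk. Setting $[\ba_0,i_0]=[\mathbf{0},i_0]$ and $[\bb_0,j_0]=[\bm_0,j_0]$, and given $[\ba_{\ell-1},i_{\ell-1}],[\bb_{\ell-1},j_{\ell-1}]\in H_\sigma$, I use the edge data from $w$ to define $[\ba_\ell,i_\ell]$ and $[\bb_\ell,j_\ell]$ by \eqref{eq:abfaces}. One must check these remain in $H_\sigma$: since $[\ba_{\ell-1},i_{\ell-1}]\in H_\sigma$, Lemma~\ref{lem:AItiling}(i) gives $\sigma^*(H_\sigma)=H_\sigma$, so $[\ba_\ell,i_\ell]\in\sigma^*[\ba_{\ell-1},i_{\ell-1}]\subset H_\sigma$, and similarly for $[\bb_\ell,j_\ell]$. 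The relation $\bm_\ell=\bb_\ell-\ba_\ell$ follows by induction from the edge condition $M\bm_\ell=\bm_{\ell-1}+\mathbf{l}(q_{\ell-1})-\mathbf{l}(p_{\ell-1})$ combined with $\bm_{\ell-1}=\bb_{\ell-1}-\ba_{\ell-1}$. The crucial claim $[\ba_\ell,i_\ell]\overset{q}{\sim}[\bb_\ell,j_\ell]$ comes from $\mathrm{cdeg}(w)\le q$: since $[i_\ell,\bm_\ell,j_\ell]$ is a node of $\hat G_B$ with degree at most $q$, we have $[\mathbf{0},i_\ell]\overset{q}{\sim}[\bm_\ell,j_\ell]$, and a translation argument (connections are invariant under the simultaneous shift $[\bx,i]\mapsto[\bx+\ba_\ell,i]$) converts this into $[\ba_\ell,i_\ell]\overset{q}{\sim}[\ba_\ell+\bm_\ell,j_\ell]=[\bb_\ell,j_\ell]$.

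For the reverse implication (faces $\Rightarrow$ walk), the face data directly reconstruct the edges of the simple ambient graph via \eqref{eq:abfaces}, so the only real content is verifying that $w$ actually lies in $\hat G_B$ and that $\mathrm{cdeg}(w)\le q$. The contact degree bound is immediate: $[\ba_\ell,i_\ell]\overset{q}{\sim}[\bb_\ell,j_\ell]$ translates back to $[\mathbf{0},i_\ell]\overset{q}{\sim}[\bm_\ell,j_\ell]$, giving $\mathrm{cdeg}([i_\ell,\bm_\ell,j_\ell])\le q$ for each node. To see the walk lies in $\hat G_B$ rather than merely in the simple ambient graph, I expect to invoke the characterization of $\hat G_B$ as the largest subgraph in which every node lies on a walk ending in a loop, using that bounded contact degree confines the nodes to a finite set (Lemma~\ref{lem:Bfinitecdeg} and the finiteness of $C$), so an infinite extension of the walk necessarily enters a loop.

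The main obstacle I anticipate is handling the $\pm C$-connection relation $\overset{q}{\sim}$ carefully under translation and under the dual substitution: the definition of $\overset{q}{\sim}$ is stated for elements of $H_\sigma$ (with the convention extending it to $-H_\sigma$), and one must ensure that translating a chain of connections by a fixed vector keeps every intermediate face inside $H_\sigma$, which is not automatic since $H_\sigma$ is not translation-invariant. The cleanest route is to observe that $\overset{q}{\sim}$ depends only on the successive \emph{differences} $\by_{k+1}-\by_k$ being in $\pm C\cup\{[i,\mathbf{0},i]\colon i\in\cA\}$, so that Lemma~\ref{lem:contactdegree} provides exactly the descent from $\sigma^*$-images back to their preimages that we need; indeed, Lemma~\ref{lem:contactdegree} applied with $[\by_1,j_1]=[\ba_\ell,i_\ell]\in\sigma^*[\ba_{\ell-1},i_{\ell-1}]$ and $[\by_2,j_2]=[\bb_\ell,j_\ell]\in\sigma^*[\bb_{\ell-1},j_{\ell-1}]$ guarantees the contact degree does not increase as we pass from level $\ell$ to level $\ell-1$, which keeps the bookkeeping between the two sequences and the single walk consistent throughout the induction.
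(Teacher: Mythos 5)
Your bookkeeping core matches what the paper intends: the paper's own proof is the one-line remark that the lemma ``follows easily by induction on $k$ and by using the definition of the edges of $\hat G_B$,'' and your recursive construction of $([\ba_\ell,i_\ell])$ and $([\bb_\ell,j_\ell])$ via \eqref{eq:abfaces}, the membership in $H_\sigma$ through Lemma~\ref{lem:AItiling}(i), and the inductive identity $\bm_\ell=\bb_\ell-\ba_\ell$ are exactly that induction written out. The problem is the step you yourself single out as the crux, which your proposal does not actually close.

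The hypothesis $\mathrm{cdeg}([i_\ell,\bm_\ell,j_\ell])\le q$ is a statement about the pair $[\mathbf{0},i_\ell]$, $[\bm_\ell,j_\ell]$, whereas the second bullet demands $[\ba_\ell,i_\ell]\overset{q}{\sim}[\bb_\ell,j_\ell]$; passing between the two is a translation by $\ba_\ell$ (and by $-\ba_\ell$ in the converse direction). You resolve this by asserting that $\overset{q}{\sim}$ ``depends only on the successive differences'' lying in $\pm C\cup\{[i,\mathbf{0},i]\colon i\in\cA\}$. That is precisely what the paper's definition does \emph{not} say: it requires every intermediate face $[\by_t,j_t]$ of the chain to lie in $H_\sigma$, and, as you note yourself, $H_\sigma$ is not translation-invariant, so a translated chain can leave $H_\sigma$. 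Nor may you simply drop that requirement, because the proof of Lemma~\ref{lem:contactdegree} --- which you invoke --- lifts each intermediate face of a chain through $\sigma^*$ using the partition of $H_\sigma$ from Lemma~\ref{lem:nthtiling}; it is valid only for chains whose intermediate faces lie in $H_\sigma$, so your relaxed reading would invalidate the very lemma you rely on. Moreover, Lemma~\ref{lem:contactdegree} cannot substitute for the missing translation step: it transports a relation from $\sigma^*$-images back to their preimages (from level $\ell$ to level $\ell-1$), whereas in the forward implication the relation must first be \emph{established} at level $\ell$ (in particular at the finest level $k$) from the origin-based hypothesis $[\mathbf{0},i_\ell]\overset{q}{\sim}[\bm_\ell,j_\ell]$, and that is exactly the unproved step. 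Finally, your argument that the reconstructed finite walk lies in $\hat G_B$ (extend it to an infinite walk confined to a finite set, hence hitting a loop) does not get off the ground: the face data give no contact-degree bound on any forward extension, since relations do not propagate to $\sigma^*$-images, so Lemma~\ref{lem:Bfinitecdeg} and the finiteness of $C$ confine nothing. To repair the proof you would need either a separate lemma proving translation invariance of $\overset{q}{\sim}$ between pairs of faces of $H_\sigma$ with the same difference, or a placement-free reformulation of the contact degree; neither is supplied in your write-up.
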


\begin{proof}
This follows easily by induction on $k$ and by using the definition of the edges of $\hat G_B$
\end{proof}

This will enable us to prove an analog of Proposition~\ref{lem:2} in the setting of Rauzy fractals.

\begin{proposition}\label{lem:Rauzy2}
Let $w:\; m_0 \xrightarrow{\eta_0} m_1 \xrightarrow{\eta_1}m_2 \xrightarrow{\eta_2} \cdots$ be a walk in $\hat G_B$ with $\mathrm{cdeg}(w)>1$. Then there exist walks
\[
w_1:\;m_0^{(1)}\xrightarrow{\eta_0^{(1)}} m_1^{(1)} \xrightarrow{\eta_1^{(1)}} m_2^{(1)} \xrightarrow{\eta_2^{(1)}} \cdots
\quad \hbox{and} \quad
 w_2:\;m_0^{(2)}\xrightarrow{\eta_0^{(2)}} m_1^{(2)} \xrightarrow{\eta_1^{(2)}} m_2^{(2)} \xrightarrow{\eta_2^{(2)}} \cdots
\]
in $\hat G_B$,
satisfying $\mathrm{cdeg}(w_1)< \mathrm{cdeg}(w)$, and $\mathrm{cdeg}(w_2) =1$, such that 
the connection $m_k$ can be decomposed into the connections $m_k^{(1)}$ and $m_k^{(2)}$ for each $k\in\N$.
\end{proposition}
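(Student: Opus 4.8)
The plan is to mimic the structure of the proof of Proposition~\ref{lem:2}, but to express everything in the language of $\pm C$-connections and dual substitutions rather than coset representatives. Given the walk $w$ in $\hat G_B$ with $\mathrm{cdeg}(w)=q>1$, I would first fix a single index $k$ and construct the required decomposition of the node $m_k=[i_k,\bm_k,j_k]$, then show that this decomposition ``pulls back'' consistently along an edge of $w$, and finally assemble an infinite decomposition by a compactness (Cantor diagonal) argument exactly as in the self-affine case.

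The concrete construction at a fixed node proceeds as follows. Write $m_k=[i_k,\bm_k,j_k]$ and use Lemma~\ref{lem:type1characterization} to realize $\bm_k=\bb_k-\ba_k$ with $[\ba_k,i_k],[\bb_k,j_k]\in H_\sigma$ and $[\ba_k,i_k]\overset{q}{\sim}[\bb_k,j_k]$. Since $q>1$, this chain of $q$ subsequent $\pm C$-connections passes through an intermediate face $[\bc_k,\ell_k]\in H_\sigma$ with $[\ba_k,i_k]\overset{q-1}{\sim}[\bc_k,\ell_k]$ and $[\bc_k,\ell_k]\overset{1}{\sim}[\bb_k,j_k]$. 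I would then set $m_k^{(1)}=[i_k,\bc_k-\ba_k,\ell_k]$ (which has contact degree at most $q-1$) and $m_k^{(2)}=[\ell_k,\bb_k-\bc_k,j_k]$ (which has contact degree $1$, hence lies in $\pm C$). By construction $m_k$ decomposes into $m_k^{(1)}$ and $m_k^{(2)}$ in the sense of Section~\ref{sec:associated}: the face $[\bc_k,\ell_k]$ plays the role of the intermediate element $[\bx_2,i_2]$ in the definition of decomposition of connections. This is the direct analog of the splitting $\bm_k=\bm_k^{(1)}+\bm_k^{(2)}$ in \eqref{eq:RsubdivisionK}.

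Next I would carry this decomposition backwards along an edge $m_{k-1}\xrightarrow{\eta_{k-1}} m_k$ of $\hat G_B$. Using Lemma~\ref{lem:type1characterization} again, the edge corresponds to predecessor faces obtained by applying $\sigma^*$, and the intermediate face $[\bc_k,\ell_k]$ has a unique predecessor $[\bc_{k-1},\ell_{k-1}]\in H_\sigma$ coming from the same prefix-suffix data. The key point is that Lemma~\ref{lem:contactdegree} applied to the two halves of the chain guarantees that the relations $[\ba_{k-1},i_{k-1}]\overset{q-1}{\sim}[\bc_{k-1},\ell_{k-1}]$ and $[\bc_{k-1},\ell_{k-1}]\overset{1}{\sim}[\bb_{k-1},j_{k-1}]$ persist, so that the decomposition $m_{k-1}=m_{k-1}^{(1)}\,|\,m_{k-1}^{(2)}$ is valid and $\mathrm{cdeg}(m_{k-1}^{(1)})\le q-1$, $\mathrm{cdeg}(m_{k-1}^{(2)})=1$. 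This produces compatible edges $m_{k-1}^{(1)}\xrightarrow{\eta_{k-1}^{(1)}}m_k^{(1)}$ and $m_{k-1}^{(2)}\xrightarrow{\eta_{k-1}^{(2)}}m_k^{(2)}$ in the simple ambient graph, with labels splitting the original label $\eta_{k-1}$ through the intermediate prefix.

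Finally, because $B$ is finite and each node has contact degree bounded by $p$ (Lemma~\ref{lem:Bfinitecdeg}), each $m_k$ admits only finitely many such intermediate faces $[\bc_k,\ell_k]$, so a Cantor diagonal argument lets me choose the decompositions coherently for all $k\in\N$ at once. The resulting sequences $(m_k^{(1)})$ and $(m_k^{(2)})$ form walks $w_1,w_2$ in the simple ambient graph with $\mathrm{cdeg}(w_1)<q$ and $\mathrm{cdeg}(w_2)=1$; since both walks visit only finitely many nodes they must contain loops, and by the defining property of $\hat G_B$ (Definition~\ref{def:simpleGSsrs}) they lie in $\hat G_B$. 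I expect the main obstacle to be the backward step: verifying that the intermediate face and its $\sigma^*$-predecessors can be chosen so that \emph{both} the length-$(q-1)$ and the length-$1$ connection relations are preserved simultaneously, which is where Lemma~\ref{lem:contactdegree} must be invoked carefully, and checking that the label-splitting through the prefix is consistent with the edge definition of the simple ambient graph.
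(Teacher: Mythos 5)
Your proposal is correct and follows essentially the same route as the paper's own proof: split the chain of $\pm C$-connections realizing a node of $w$ through an intermediate face, pull that face back through its (unique, by Lemma~\ref{lem:nthtiling}) $\sigma^*$-predecessors while preserving both relations via Lemma~\ref{lem:contactdegree}, translate the resulting face sequences back into walks via Lemma~\ref{lem:type1characterization}, and conclude with a Cantor diagonal argument and the loop criterion for membership in $\hat G_B$. The only points you leave implicit—handled in the paper by a case distinction on $[\bb_0-\by_0,j_0]\in\pm H_\sigma$ and a shift by $-M^{-\ell}\by_0$—concern normalizing the second pair of face sequences so that Lemma~\ref{lem:type1characterization} applies, which is a routine adjustment rather than a gap.
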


\begin{proof}
Fix $k\in\N$ and let $w|_k$ be the walk consisting of the first $k$ edges of $w$ and
write out the nodes and labels of the walk $w|_k$ by 
\[
w|_k:\; [i_0,\bm_0,j_0] \xrightarrow{\mathbf{l}(p_0)|\mathbf{l}(q_0)} [i_1,\bm_1,j_1] \xrightarrow{\mathbf{l}(p_1)|\mathbf{l}(q_1)} \cdots \xrightarrow{\mathbf{l}(p_{k-1})|\mathbf{l}(q_{k-1})} [i_k,\bm_k,j_k].
\]
W.l.o.g.\ we assume that $[\bm_0,j_0] \in H_\sigma$ (otherwise we just consider $[-\bm_0,i_0]$ instead). Lemma~\ref{lem:type1characterization} implies that for each $\ell\in\{0,\ldots,k\}$ there exist faces $[\ba_\ell,i_\ell] \in (\sigma^\ell)^*[\mathbf{0},i_0]$ and $[\bb_\ell,j_\ell]\in(\sigma^\ell)^*[\bm_0,j_0]$ as defined in \eqref{eq:abfaces} that satisfy  $[\ba_\ell,i_\ell] \overset{q}{\sim} [\bb_\ell,j_\ell]$ with $q\geq \mathrm{cdeg}(w)\geq 2$. Thus by Lemma~\ref{lem:type1characterization} the walk $w|_k$ can be written as 
\[
w|_k:\; [i_0,\bb_0-\ba_0,j_0] \xrightarrow{\mathbf{l}(p_0)|\mathbf{l}(q_0)} [i_1,\bb_1-\ba_1,j_1] \xrightarrow{\mathbf{l}(p_1)|\mathbf{l}(q_1)} \cdots \xrightarrow{\mathbf{l}(p_{k-1})|\mathbf{l}(q_{k-1})} [i_k,\bb_k-\ba_k,j_k].
\]
with $[\ba_k,i_k] \overset{q}{\sim} [\bb_k,j_k]$. Moreover, there exists $[\by_k,h_k] \in H_\sigma$ such that $[\ba_k,i_k] \overset{q-1}{\sim} [\by_k,h_k]\overset{1}{\sim} [\bb_k,j_k]$. By Lemma~\ref{lem:nthtiling} there exists $[\by_\ell,h_\ell]$ such that $[\by_k,h_k]\in(\sigma^{k-\ell})^*[\by_\ell,h_\ell]$. By the definition of $\sigma^*$ this implies that
\begin{equation}\label{eq:yfaces}
[\by_\ell, h_\ell]=\big[ M^{-1}(\by_{\ell-1}+ t_{\ell-1}), h_\ell \big] \in (\sigma^*)^\ell[\by_0,h_0],
\end{equation}
where $t_{\ell-1}h_{\ell-1} \prec \sigma(h_{\ell})$ for $\ell\in\{1,\ldots,k\}$. Moreover, by Lemma~\ref{lem:contactdegree} for all $\ell\in\{0,\ldots,k\}$ we have $[\ba_\ell,i_\ell] \overset{q-1}{\sim} [\by_\ell,h_\ell]\overset{1}{\sim} [\bb_\ell,j_\ell]$. Thus, applying Lemma~\ref{lem:type1characterization} to $([\ba_\ell,i_\ell])_{0\le\ell\le k}$ and $([\by_\ell,h_\ell])_{0\le\ell\le k}$ we see that there exists a walk
\[
w_{1,k}:\; [i_0,\by_0-\ba_0,h_0] \xrightarrow{\mathbf{l}(p_0)|\mathbf{l}(t_0)}  [i_1,\by_1-\ba_1,h_1] \xrightarrow{\mathbf{l}(p_1)|\mathbf{l}(t_1)} \cdots \xrightarrow{\mathbf{l}(p_{k-1})|\mathbf{l}(t_{k-1})}  [i_k,\by_k-\ba_k,h_k]
\]
with $\mathrm{cdeg}(w_{1,k})\le q-1$. We know that $[\bb_0-\by_0,j_0] \in \pm H_\sigma$. Assume that we have $[\bb_0-\by_0,j_0] \in H_\sigma$ (the other case can be treated analogously). Then, shifting by $-M^{-\ell}\by_0$ and using the definition of $\sigma^*$ we gain
\[
[\bb_\ell - M^{-\ell}\by_0, j_\ell] \in (\sigma^*)^\ell[\bb_0-\by_0,j_0] \quad\hbox{and} \quad
[\by_\ell - M^{-\ell}\by_0, h_\ell] \in (\sigma^*)^\ell[\mathbf{0},h_0].
\]
Thus we may apply  Lemma~\ref{lem:type1characterization} to $([\by_\ell - M^{-\ell}\by_0, h_\ell])_{0\le\ell\le k}$ and $([\bb_\ell - M^{-\ell}\by_0, j_\ell])_{0\le\ell\le k}$ to exhibit a walk
\[
w_{2,k}:\; [h_0,\bb_0-\by_0,j_0] \xrightarrow{\mathbf{l}(t_0)|\mathbf{l}(q_0)}  [h_1,\bb_1-\by_1,j_1] \xrightarrow{\mathbf{l}(t_1)|\mathbf{l}(q_1)} \cdots \xrightarrow{\mathbf{l}(t_{k-1})|\mathbf{l}(q_{k-1})}  [h_k,\bb_k-\by_k,j_k]
\]
with $\mathrm{cdeg}(w_{2,k})= 1$. For $\ell\in\{0,\ldots, k\}$ consider the nodes $[i_\ell,\bb_\ell-\ba_\ell,j_\ell]$, $[i_\ell,\by_\ell-\ba_\ell,h_\ell]$, and $[h_\ell,\bb_\ell-\by_\ell,j_\ell]$.  Then $[i_\ell,\bb_\ell-\ba_\ell,j_\ell]$ connects $[\ba_\ell,i_\ell]$ and $[\bb_\ell,j_\ell]$. Since $[i_\ell,\by_\ell-\ba_\ell,h_\ell]$ connects $[\ba_\ell,i_\ell]$ and $[\by_\ell,h_\ell] \in H_\sigma$ and $[h_\ell,\bb_\ell-\by_\ell,j_\ell]$ connects $[\by_\ell,h_\ell]$ and $[\bb_\ell,j_\ell]$, we conclude that $[i_\ell,\bb_\ell-\ba_\ell,j_\ell]$ can be decomposed into $[i_\ell,\by_\ell-\ba_\ell,h_\ell]$ and $[h_\ell,\bb_\ell-\by_\ell,j_\ell]$. Thus we proved the lemma for walks of length $k$.

However, since a given node of $w$ can have only finitely many decompositions in two connections of contact degree $\mathrm{cdeg}(w)-1$ and $1$, respectively, by a Cantor diagonal argument we may choose $w_1$ and $w_2$ as asserted in the lemma for all (infinitely many) edges. This finishes the proof.
\end{proof}

We are now in a position to prove that Algorithm~\ref{algRauzyneighbor} yields the neighbor graph after finitely many steps.

\begin{theorem}
Let $\sigma$ be a Pisot substitution. Then Algorithm~\ref{algRauzyneighbor} terminates after finitely many steps and has the self-replicating neighbor graph $G_B$ of $\sigma$ as its output.
\end{theorem}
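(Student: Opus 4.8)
The plan is to transcribe the blueprint of the self-affine case into the substitution setting, replacing the induction on walks in $\Gamma_S$ by an induction on walks in $\hat G_B$ and the graph sum ``$+R$'' by the operation $C$-Corona$(\cdot)$. Concretely, I would prove by induction on $q\ge 1$ that $A[q]$ equals the union of all infinite walks $w$ of $\hat G_B$ with $\mathrm{cdeg}(w)\le q$. This splits into two inclusions at each stage: every such walk of $\hat G_B$ lies in $A[q]$ (the forward direction), and conversely $A[q]\subseteq \hat G_B$. The reverse inclusion is the routine one throughout: since $A[q]=\mathop{Red}(C\text{-Corona}(A[q-1]))$ is a finite subgraph of the simple ambient graph, every node surviving $\mathop{Red}$ lies on an infinite walk, hence, the graph being finite, on a walk that ends in a loop; by the maximality property defining $\hat G_B$ in Definition~\ref{def:simpleGSsrs}, such a node belongs to $\hat G_B$. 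The base case of this reverse inclusion is $A[1]=\hat G_C\subseteq \hat G_B$, which holds because $\hat G_C=\mathop{Red}(\hat G_P)$ likewise consists of nodes lying on infinite walks.

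For the forward direction, the induction start $q=1$ records that, by Lemma~\ref{lem:contactdegree}, a walk of contact degree $1$ has all its nodes in $\pm C$ and is therefore, by the construction of the contact graph in Definition~\ref{def:simplesubsContactGraph}, a walk in $\hat G_C=A[1]$. For the induction step I would take an infinite walk $w$ in $\hat G_B$ with $\mathrm{cdeg}(w)\le q$ and apply Lemma~\ref{lem:Rauzy2} to split it into walks $w_1$ with $\mathrm{cdeg}(w_1)<q$ and $w_2$ with $\mathrm{cdeg}(w_2)=1$, so that each node $m_k$ of $w$ decomposes into the connection $m_k^{(1)}$ of $w_1$ and the connection $m_k^{(2)}$ of $w_2$. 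By the induction hypothesis $w_1$ is a walk in $A[q-1]$, while $m_k^{(2)}$ is a single $\pm C$-connection. Writing $m_k^{(1)}=[i_k,\by_k-\ba_k,h_k]$ and $m_k^{(2)}=[h_k,\bb_k-\by_k,j_k]$ as in the proof of Lemma~\ref{lem:Rauzy2}, the relation $[\by_k-\ba_k,h_k]\overset{1}{\sim}[\bm_k,j_k]$ holds, so $m_k=[i_k,\bm_k,j_k]$ matches exactly the node condition of Definition~\ref{def:corona} with base node $m_k^{(1)}\in A[q-1]$. Hence every node of $w$, and therefore $w$ itself as a walk of the simple ambient graph, lies in $C$-Corona$(A[q-1])$; and since $w$ is infinite, each of its nodes has an outgoing edge and survives $\mathop{Red}$, so $w$ lies in $A[q]$.

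Termination then follows from Lemma~\ref{lem:Bfinitecdeg}. Because the trivial connection $[i,\mathbf{0},i]$ is admitted in a $\pm C$-connection, one has $A[q-1]\subseteq C\text{-Corona}(A[q-1])$, so the sequence $(A[q])_q$ is nondecreasing and bounded above by the finite graph $\hat G_B$. Lemma~\ref{lem:Bfinitecdeg} supplies a $p$ with $\mathrm{cdeg}([i,\bm,j])\le p$ for all $[i,\bm,j]\in B$, so the inductive description forces $A[p]=\hat G_B$ and $A[p+1]=\hat G_B=A[p]$; thus the loop stops and returns $\hat G_B$, which by Lemma~\ref{lem:equivalence} is equivalent to the desired $G_B$.

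The step I expect to be the main obstacle is the forward induction step, specifically the faithful translation of the decomposition furnished by Lemma~\ref{lem:Rauzy2} into the precise node condition of the $C$-corona in Definition~\ref{def:corona}. One must track the $H_\sigma$ versus $-H_\sigma$ sign conventions (the w.l.o.g.\ assumption $[\bm_0,j_0]\in H_\sigma$ and the analogous choice $[\bb_0-\by_0,j_0]\in H_\sigma$ made inside Lemma~\ref{lem:Rauzy2}), verify that the middle face $[\by_k,h_k]$ genuinely lies in $H_\sigma$ so that $\overset{1}{\sim}$ applies, and confirm that it is \emph{nodes}, not merely abstract connections, that populate the corona, so that the edges of $w$ are truly edges of $C$-Corona$(A[q-1])$. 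The Cantor diagonal argument built into Lemma~\ref{lem:Rauzy2} is what guarantees a single coherent choice of decompositions along the entire infinite walk, and this coherence is exactly what promotes the statement from finite truncations $w|_k$ to the infinite walk $w$ itself.
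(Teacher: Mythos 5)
Your proposal is correct and follows essentially the same route as the paper: induction on the contact degree, with Proposition~\ref{lem:Rauzy2} supplying the decomposition into $w_1$ and $w_2$ that places each walk of degree at most $q$ inside $C$-Corona$(A[q-1])$, and Lemma~\ref{lem:Bfinitecdeg} forcing termination. Your explicit reverse inclusion $A[q]\subseteq \hat G_B$ and the monotonicity of the sequence $(A[q])_{q}$ are details the paper leaves implicit (they are indeed needed to conclude $A[q+1]=A[q]$), and your citation of Lemma~\ref{lem:contactdegree} in the base case should simply be the definition of contact degree, as in the paper; neither point changes the argument.
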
 

\begin{proof}
 Let $A[1] = \hat G_C$ and $A[p] = \mathop{Red}(C\text{-Corona}(A[p-1]))$ for $p>1$. We prove by induction that $A[p]$ contains all infinite walks $w$ of $\hat G_B$ having $\mathop{cdeg}(w) \le p$. 

Let $w$ be an infinite walk in $\hat G_B$ with $\mathop{cdeg}(w) = 1$ then, by definition, $w$ is a walk whose nodes are contained in $\pm C$. Thus $w$ is a walk in $\hat G_C$ and, hence, in $A[1]$. This constitutes the induction start.

To prove the induction step we assume that $A[p-1]$ consists of all infinite walks of $\hat G_B$ having contact degree less than or equal to $p-1$. Let $w:\; m_0 \xrightarrow{\eta_0} m_1 \xrightarrow{\eta_1}m_2 \xrightarrow{\eta_2} \cdots$ be an infinite walk in $\hat G_B$ having $\mathop{cdeg}(w) \le p$. Then, by Proposition~\ref{lem:Rauzy2} and by the induction hypothesis we know that there exist infinite walks $w_1:\;m_0^{(1)}\xrightarrow{\eta_0^{(1)}} m_1^{(1)} \xrightarrow{\eta_1^{(1)}} m_2^{(1)} \xrightarrow{\eta_2^{(1)}} \cdots
\in A[p-1]$ and $w_2:\;m_0^{(2)}\xrightarrow{\eta_0^{(2)}} m_1^{(2)} \xrightarrow{\eta_1^{(2)}} m_2^{(2)} \xrightarrow{\eta_2^{(2)}} \cdots \in A[1]$ such that $m_k=[i_k,\bm_k,j_k]$ can be decomposed into the connections $m_k^{(1)}=[i_k,\bm_k^{(1)},h_k]$ and $m_k^{(2)}$ for each $k\in \N$.  
Since $m_k^{(2)} \in \pm C$, by possibly replacing $m_k^{(1)}$ by its negative (which is still an element of $A[p-1]$), we have 
$[\bm_k,j_k] \overset{1}{\sim}[\bm_k^{(1)},h_k]$
 and, hence, $m_k=[i_k,\bm_k,j_k] \in (C\text{-Corona}(A[p-1])$. Because $k$ was arbitrary, $w$ is an infinite walk in $C$-Corona$(A[p-1])$ and, therefore, an infinite walk in $A[p]=\mathop{Red}(C\text{-Corona}(A[p-1]))$. 

Since by Lemma~\ref{lem:Bfinitecdeg} there is $q\in\N$ such that each walk $w$ in $\hat G_B$ satisfies $\mathop{cdeg}(w) \le q$, we conclude that $A[q+1]=A[q]$. Thus the algorithm terminates after finitely many steps and returns $\hat G_B$, and, hence, $G_B$, as desired.
\end{proof}

\section{Examples}\label{sec:Examples}
We consider an  example and explain how our algorithm works. We set $\mathbf{e}_1=(1,0,0),\mathbf{e}_2=(0,1,0), \mathbf{e}_3=(0,0,1)$  as the basis of $\R^3$. 
\begin{example}
Recall that 
$$\sigma_1: \left\{\begin{aligned}
1&\longrightarrow 1112\\
2&\longrightarrow 113\\
3&\longrightarrow 1\\
\end{aligned}
\right.
\quad \quad\quad\quad
\sigma_2: \left\{\begin{aligned}
1&\longrightarrow 112\\
2&\longrightarrow 1113\\
3&\longrightarrow 1\\
\end{aligned}
\right.
$$
We start by computing the contact graph for $\sigma_1$ and $\sigma_2$. By  construction, we first get the set $\mathfrak{D}_{\rm cont}$, which includes all faces having $(d-2)$-dimensional intersection. For $\sigma_1$, 
\[
\mathfrak{D}_{\rm cont}=\{[1,\mathbf{e}_2,1],[1,\mathbf{e}_3,1],[1,\mathbf{0},2],[1,\mathbf{0},3],[2,\mathbf{e}_3,1],[2,\mathbf{e}_1-e_2,1],[2,\mathbf{0},3],[3,\mathbf{e}_1-\mathbf{e}_3,1],[3,\mathbf{e}_2-\mathbf{e}_3,2]\},
\]
and for $\sigma_2$
\[
\mathfrak{D}_{\rm cont}=\{[1,\mathbf{e}_2,2],[1,\mathbf{e}_3,1],[1,\mathbf{0},2],[1,\mathbf{0},3],[2,\mathbf{e}_3,1],[2,\mathbf{e}_1,2],[2,\mathbf{0},3],[3,\mathbf{e}_1-\mathbf{e}_3,1],[3,\mathbf{e}_2-\mathbf{e}_3,2]\}
\]
Then, by Definition \ref{def:simplesubsContactGraph}, we obtain the simple self-replicating contact graph. Now, applying Algorithm \ref{algRauzyneighbor} to the simple self-replicating contact graph, we can compute the simple self-replicating neighbor graph.

Figure \ref{Nei_sigma1} presents the  self-replicating contact  graph for
 $\sigma_1$. It has $14$ vertices, while the simple contact neighbor graph has $26$ vertices. We recall that we exclude $[i,\mathbf{0},j]$ for $i\geq j$. The dashed lines in the figure show edges of type $2$, according to Definition \ref{def:ambientG}. Considering the graph as a subgraph of the simple self-replicating contact graph, then the edge $[i,x,j]\rightarrow[i',x',j']$ of type $2$ would change to an edge $[i,x,j]\rightarrow[j',-x',i']$. By Algorithm \ref{algRauzyneighbor}, it turns out that the (simple) self-replicating neighbor graph is exactly the same as the (simple) self-replicating contact graph.

\begin{figure}[h]
\includegraphics[width=13 cm]{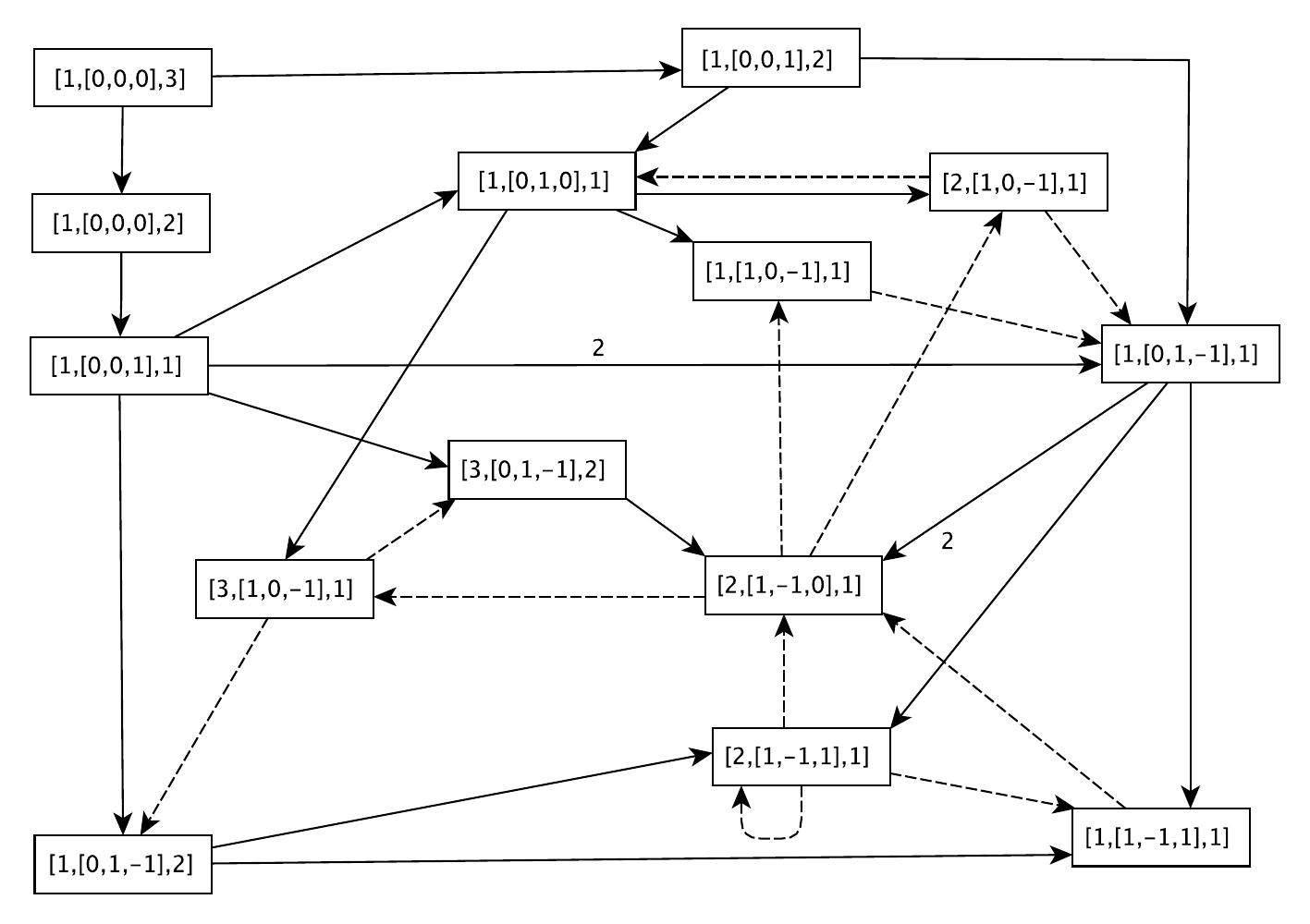} 
\caption{Self-replicating contact graph and self-replicating neighbor graph of $\sigma_1$. The number ``2'' on some of the arrows indicates that there are actually two edges between the associated vertices.} \label{Nei_sigma1}
\end{figure}

In Figure \ref{Nei_sigma2} we depicted the self-replicating neighbor graph of $\sigma_2$. It was obtained by applying our new algorithm to the self-replicating contact graph, whose 15 vertices are the dark grey nodes in the same figure.

\begin{figure}[h]
\includegraphics[width=13 cm]{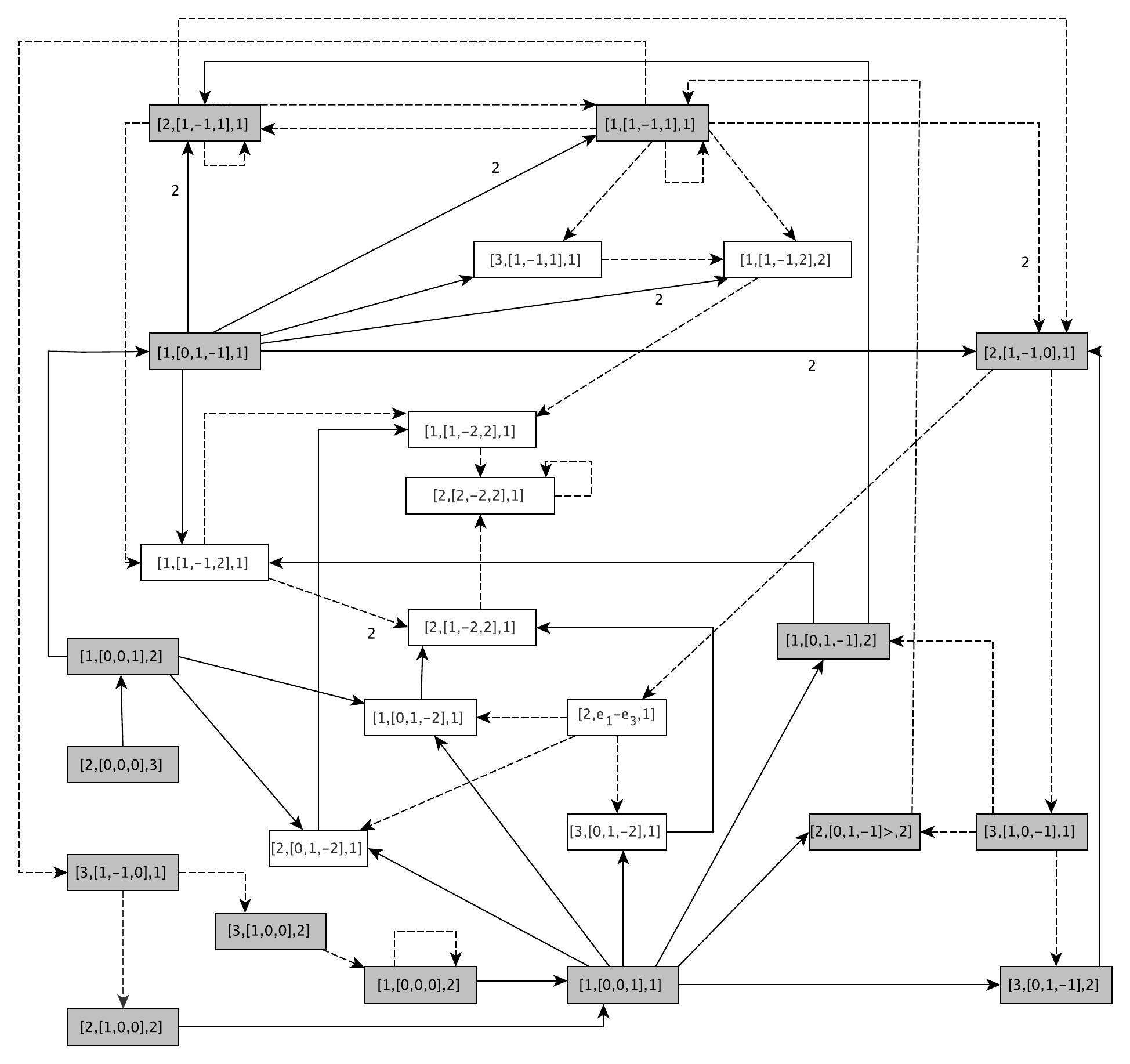} 
\caption{Self-replicating contact  graph (dark grey nodes) and  self-replicating neighbor graph of $\sigma_2$. The number ``2'' on some of the arrows indicates that there are actually two edges between the associated vertices.}\label{Nei_sigma2}
\end{figure}

\end{example}

\bibliographystyle{abbrv}
\bibliography{RauzyNeighbors}

\end{document}